\documentclass[a4paper,10pt]{article}
\usepackage[utf8]{inputenc}

\usepackage{amsfonts,latexsym,amstext}
\usepackage{amsmath}
\usepackage[active]{srcltx}
\usepackage[active]{srcltx}
\usepackage{color}
\usepackage{amsmath,amsfonts,calrsfs,amssymb,color,enumerate,mathrsfs }
\usepackage{amsthm}
\usepackage{amssymb} 
\usepackage{cancel}

\textwidth=6,3in

\voffset=-1.2 cm 
\hoffset= -1.9 cm

\newtheorem{theorem}{Theorem}[section]
\newtheorem{lemma}[theorem]{Lemma}
\newtheorem{proposition}[theorem]{Proposition}
\newtheorem{definition}[theorem]{Definition}

\newtheorem{hypothesis}[theorem]{Hypothesis}

\newtheorem{remark}[theorem]{Remark}
\newtheorem{corollary}[theorem]{Corollary}

\setcounter{page}{1}

%
%


\numberwithin{equation}{section}

\def\sqr#1#2{{\vcenter{\vbox{\hrule height .#2pt \hbox{\vrule
 width .#2pt height#1pt \kern#1pt \vrule
width .#2pt} \hrule height .#2pt}}}}

\def\ds{\begin{displaystyle}}
\def\eds{\end{displaystyle}}

\def\<{\langle }
\def\>{\rangle }

\def\T{\mathcal T}

\def\R{\mathbb R}
\def\N{\mathbb N}

\def\E{\mathbb E}
\def\P{\mathbb P}

\def\cala{{\cal A}}

\def\calf{{\cal F}}
\def\calg{{\cal G}}

\def\calp{{\cal P}}

\def\calr{{\cal R}}

\def\Etau{\E^{\calf_\tau}}

\allowdisplaybreaks

\title{{ A BSDEs approach to pathwise uniqueness
{ for stochastic evolution equations } }}

\usepackage{authblk}

\title{{A BSDEs approach to pathwise uniqueness for stochastic evolution equations} }

\usepackage{authblk}

\author[1]{Davide ADDONA\thanks{davide.addona@unipr.it}}
\author[2]{Federica MASIERO\thanks{federica.masiero@unimib.it}} 
\author[3]{Enrico PRIOLA\thanks{enrico.priola@unipv.it}} 

\affil[1]{Dipartimento di Scienza Matematiche, Fisiche e Infomatiche, Universit\`a di Parma, Parma, Italy}

\affil[2]{Dipartimento di Matematica e Applicazioni, Universit\`a di Milano Bicocca, Milano, Italy}

\affil[3]{Dipartimento di Matematica, Universit\`a di Pavia, Pavia, Italy}

\date{}  

\begin{document}

\maketitle

\begin{abstract}
We prove strong well-posedness for a class of  stochastic  evolution equations in  Hilbert spaces $H$ when the drift term is   H\"older continuous.   This class includes examples of semilinear  
stochastic beam equations  which describe elastic systems with structural damping
and  semilinear stochastic 3D heat equations.   
In the deterministic case, there are
examples of non-uniqueness in our framework. Strong (or pathwise) uniqueness is restored by means of a suitable additive Wiener noise. 
 The  proof of uniqueness  relies  on   the  study of 
 related systems of infinite dimensional forward-backward  SDEs  (FBSDEs). This  is  a different  approach with respect to the well-known method based on the It\^o formula and   the associated Kolmogorov equation (the so-called Zvonkin transformation or It\^o-Tanaka trick). 
 We deal with   approximating FBSDEs in which the linear  part generates a  group of bounded linear operators in $H$; such approximations depend on the type of SPDEs we are  considering. We also prove Lipschitz dependence of solutions from their  initial conditions. 
\end{abstract}

\smallskip

\noindent\textbf{Keywords and Phrases:} Nonlinear stochastic PDEs; H\"older continuous drift;
Backward stochastic differential equations; semilinear stochastic damped beam equations; semilinear heat equations; strong uniqueness.

\smallskip

\noindent \textbf{MSC2020:} primary 60H15; secondary 35R60

\section{Introduction}
\subsection{Problem and main result}
In this paper we consider the problem of strong well-posedness for a class of  stochastic partial differential equations (SPDEs) when the drift term is only H\"older continuous. 

In a real and separable Hilbert space $H$ we consider a stochastic evolution equation of the form
\begin{align}
\label{nonlinear_stochastic_eq_intro}
\left\lbrace
\begin{array}{ll}dX_\tau=AX_\tau d\tau+G\widetilde C(\tau,X_\tau)d\tau+GdW_\tau, & \tau\in[0,T],\vspace{1mm} \\ 
X_0=x\in H, \end{array}\right.
\end{align}
where $A:D(A)\subset H\rightarrow H$ is the infinitesimal generator of a strongly continuous semigroup $(e^{tA})_{t\geq0}$, the operator 
 $G:U\rightarrow H$ is a bounded linear operator defined on another real and separable Hilbert space $U$ and  $W$ is a cylindrical Wiener process on $U$ (cf. Section \ref{sub:abstract_equation}). Moreover,  $\widetilde C:[0,T]\times H\rightarrow U$ is bounded and continuous and $\widetilde C(t,\cdot)$ is $\beta$-H\"older continuous, uniformly in  $t\in[0,T]$, $\beta \in (0,1)$.

In our framework   there are examples of non uniqueness of solutions in the deterministic case (i.e., when $W =0$ in \eqref{nonlinear_stochastic_eq_intro}) (see \cite{dapr-fl} and \cite{MasPri17}). Hence our main result on strong uniqueness, see Theorem \ref{teo:abs_uni1},  is due to  the presence of the  Wiener noise (regularization by noise).  
 Clearly, such theorem holds also in the  
 Lipschitz case $\beta =1$ (in this case the result does not depend on  $W$). Moreover, the  boundedness of $\tilde C$ can be relaxed  by a localization procedure (see Remark \ref{gen}). 
  Our pathwise result also implies the existence of a  strong mild solution (cf. Remark \ref{strong}).

Examples of singular  semilinear  SPDEs of the form \eqref{nonlinear_stochastic_eq_intro} we can treat are 
stochastic damped equations  which describe elastic systems with structural damping
and semilinear stochastic 3D heat equations (see \eqref{abst_sto_damped_WE_1a}, \eqref{ww42} and  Section \ref{sec:applications} which is about applications).

In the literature the problem of  regularization by noise  for stochastic evolution equations \eqref{nonlinear_stochastic_eq_intro}   of parabolic type has been widely studied, see \cite{dapr-fl}, \cite{DPFPR13}, \cite{DPFPR15}, \cite{DPFRV16}, \cite{GP93}, \cite{Zv74} and the references therein. 
Moreover, we mention 
\cite{CeDaFl13} for SDEs in Banach space,  
and \cite{MasPri17} {and \cite{MasPri23}}, where the semilinear stochastic wave equation is studied. 
 Usually in such papers pathwise uniqueness is obtained by using the  It\^o formula after solving  a Kolmogorov equation. In finite dimension this is the so-called Zvonkin transformation or the It\^o-Tanaka trick (see the seminal paper \cite{Ver},  the recent monograph \cite{Fl} and the references therein). 
 
 Note that establishing the It\^o formula in infinite dimensions  is a delicate issue when the  noise is cylindrical (cf. \cite{dapr-fl}, \cite{DPFPR13}, \cite{DPFPR15} and the references therein). 
We  replace the previous  approach with a method based on  forward-backward  SDEs  (FBSDEs).
This does not require proving the It\^o formula, it  works  for hyperbolic and parabolic SPDEs and 
 extends  the method introduced in \cite{MasPri17} for singular semilinear stochastic wave equations.

The techniques of \cite{MasPri17} {and \cite{MasPri23}} work only when the operator $A$ appearing in equation \eqref{nonlinear_stochastic_eq_intro} is the generator of a group of bounded linear operators. Here  we introduce 
approximating FBSDEs in which the linear  parts $A_n$ in the backward equation  generate a  group of bounded linear operators in $H$  (cf. equation \eqref{FBSDE_approssimato_n_intro} below); 
  such approximations depend on the type of SPDEs we are  considering. 
 Let us explain  two examples of 
singular SPDEs of the form \eqref{nonlinear_stochastic_eq_intro}
 we can consider.

{The first class of equations we can treat is stochastic semilinear damped Eulero-Bernoulli beam equations of the form
\begin{align} 
\label{abst_sto_damped_WE_1a}
\left\{  
\begin{array}{ll}
\displaystyle \frac{\partial ^2y}{\partial t^2}(t)=-\Lambda y(t)-\rho \Lambda^\alpha\frac{\partial y}{\partial t}(t)+  \Lambda^{-\gamma}\tilde C\left(t,y(t), \frac{\partial y}{\partial t}(t)\right)+\Lambda^{-\gamma}\dot{W}_t, & t\in(0,T], \vspace{1mm}\\
y(0)=y_0, \vspace{1mm} \\
\displaystyle \frac{\partial y}{\partial t}(0)=y_1,
\end{array}
\right.
\end{align}
with $\rho,\gamma>0$ and $\alpha\in[0,1)$.
Here, $\Lambda:D(\Lambda)\subset U\rightarrow U$ is a {positive self-adjoint operator} on a separable Hilbert space $U$, there exists { $\Lambda^{-\gamma}$ and it is a trace class operator from $U$ into $U$.}   
Such equations describe elastic systems with structural damping. For physical motivations we refer to  the seminal paper \cite{chen-russ} (see also  the references therein) for the deterministic equation, and to \cite{brez1,brez2,chow} for the stochastic counterpart.}

The second class of equations we can treat is 3D semilinear stochastic heat equations like 
 \begin{align} 
\label{ww42}
\left\lbrace
\begin{array}{ll}dX^{x}_t= \Delta X^{x}_t dt + (-\Delta)^{-\gamma/2} \widetilde C(X^{x}_t)dt + (-\Delta)^{-\gamma/2} dW_t, & t \in[0,T], \vspace{1mm} \\ 
X_0^{x}=x\in H, 
 \end{array}\right.
\end{align} 
 where we are dealing with   the Laplace operator in $H=U=L^2([0,\pi]^d)$ with periodic boundary conditions and  $G=(-\Delta)^{-\gamma/2}$ with $\gamma\geq 0$. Such equations are  also considered 
  in \cite[Example 6.1]{dapr-fl}   with $\beta$-H\"older continuous drifts $\widetilde C$ (even with $(-\Delta)^{-\gamma/2} \widetilde C(X^{x}_t)$ replaced by the more general term $ \widetilde C(X^{x}_t)$). However as the authors explain in \cite{dapr-fl} for $\beta$-H\"older continuous drift terms $\widetilde C$  they can only prove uniqueness  in dimensions $1$ and $2$.
 On the other hand, in Section \ref{sub:heat_equation} we  treat also the dimension $d=3$. 

 As in \cite{MasPri17} our method allows   to  establish     Lipschitz dependence of solutions from their initial conditions (cf. Theorem \ref{teo:abs_uni1}):
 \begin{equation}   
\label{lipp2}
 \sup_{t \in [0,T]} \E  [   \vert  X_t^{x_1}-X_t^{x_2}\vert_H^2] \leq c_T \vert x_1-x_2\vert_H^2, 
\end{equation} 
where $X^{x_1}$ and $X^{x_2}$ denote  the  weak mild solutions to \eqref{nonlinear_stochastic_eq_intro} starting at $x_1$ and $x_2 \in H$, respectively.
 Estimates like \eqref{lipp2} have not been proved before in  papers
 on regularization by noise for  stochastic evolution equations 
  of parabolic type  (cf. \cite{dapr-fl}, \cite{DPFPR13}, \cite{DPFPR15}, \cite{DPFRV16}, 
   \cite{CeDaFl13}). 
  
Note that when  $\widetilde C$ is only continuous and bounded even for parabolic SPDEs (with $A$ which verifies  Hypothesis \ref{ip_finite}) pathwise uniqueness for \eqref{nonlinear_stochastic_eq_intro} { for any initial $x \in H$} is still an open problem   (cf. \cite{DPFPR13}, \cite{DPFRV16} and the references therein).

\subsection{Strategy of the proof}
First notice the particular structure of equation \eqref{nonlinear_stochastic_eq_intro}, which in the BSDE literature is referred to as {\it structure condition}: the drift belongs to the image of the diffusion operator $G$.
 Under  the basic Hypothesis \ref{hyp_1} the existence of a (unique in law) weak mild solution given by
\begin{align} 
\label{intro_mild_solution}
X_t=e^{tA}x+\int_0^te^{(t-s)A}G\widetilde C(s,X_s)ds+\int_0^te^{(t-s)A}GdW_s, \quad \P{\textup{-a.s.}}, 
\end{align}
$t\in[0,T]$,  directly follows by an infinite dimensional version of the Girsanov theorem, see e.g. \cite[Proposition 7.1]{Ondre04} and \cite[Section 10.3]{DPsecond}. 

In order to prove pathwise uniqueness of solutions to equation \eqref{nonlinear_stochastic_eq_intro}, we complement equation \eqref{nonlinear_stochastic_eq_intro} with a family of BSDE, which gives the following family of systems of FBSDEs
\begin{align}
\label{FBSDE_approssimato_n_intro}  
\left\{
\begin{array}{ll}
d X_\tau^{t,x}=AX_\tau^{t,x}d\tau+G\widetilde C(\tau,X_\tau^{t,x})d\tau+GdW_\tau, & \tau\in[t,\mathcal T], \vspace{1mm}\\
X_t^{t,x}=x, & \tau\in[0,t], \vspace{1mm}\\
-dY_\tau^{t,x,n}=-A_nY_\tau^{t,x,n}d\tau+G\widetilde C(\tau,X_\tau^{t,x})d\tau-Z_\tau^{t,x,n}dW_\tau, & \tau\in[t,\mathcal T], \vspace{1mm} \\
Y_{\mathcal T}^{t,x,n}=0,
\end{array}
\right.  
\end{align}
with $\mathcal T\in(0,T]$ and $n\in\N$. Here, $(A_n)_{n\in\N}$ is a sequence of linear operators on $H$ and each $A_n$ generates a group of bounded operators $(e^{tA_n})_{t\in\R}$ which pointwise approximates $(e^{tA})_{t\geq0}$. We stress that the idea of associating to \eqref{nonlinear_stochastic_eq_intro} a BSDE has been exploited in \cite{MasPri17} {and \cite{MasPri23}}, but in that case the assumption that $A$ is the generator of a strongly continuous group $(e^{tA})_{t\in\R}$ is fundamental. In the present paper a crucial point is to consider a suitable sequence of approximating BSDEs, where if $A$ is not the generator of a group  of operators, each $A_n$ is. 

By an infinite dimensional version of Girsanov Theorem, the process
\begin{align*}
\widehat W_\tau:= W_\tau+\int_0^\tau \widetilde C(s,X_s^{t,x})ds,
\end{align*}
is a cylindrical Wiener process on $U$ up to time $T$. Under this transformation, system \eqref{FBSDE_approssimato_n_intro} reads as
\begin{align}
\label{intro_syst_FBSDE}
\left\{
\begin{array}{ll}
d X_\tau^{t,x}=AX_\tau^{t,x}d\tau+Gd\widehat W_\tau, & \tau\in[t,\mathcal T], \vspace{1mm}\\
X_t^{t,x}=x, & \tau\in[0,t], \vspace{1mm}\\
-dY_\tau^{t,x,n}=-A_nY_\tau^{t,x,n}d\tau+G\widetilde C(\tau,X_\tau^{t,x})d\tau
+Z_\tau^{t,x,n}\widetilde C(\tau,X_\tau^{t,x})d\tau-Z_\tau^{t,x,n}d\widehat W_\tau, & \tau\in[t,\mathcal T], \vspace{1mm} \\
Y_{\mathcal T}^{t,x,n}=0,
\end{array}
\right.  
\end{align}
and this system has a unique solution $(X^{t,x},Y^{t,x,n},Z^{t,x,n})$ where $(Y^{t,x,n},Z^{t,x,n})$ is a pair of predictable processes belonging to $L^2(\Omega;C([0,\mathcal T];H))\times  L^2(\Omega\times [0,\mathcal T];L_2(U;H))$. Further, $\mathbb P$-a.s.
\begin{align} \label{wq}
Y_\tau^{t,x,n}=e^{-(\T-\tau)A_n}u_n^{\T}(\tau, \Xi_{\tau}^{t,x}), \quad Z_\tau^{t,x,n}=e^{-(\T-\tau)}\nabla^Gu_n^{\T}(\tau, \Xi_\tau^{t,x}), \ \ \textup{a.e.} \; \tau\in[t,\T],
\end{align}
where $\nabla ^G$ denotes the G\^ateaux derivative along the direction of $G$ and $u_n^{\mathcal T}$ is the unique solution to
\begin{align}
\label{intro_int_det_eq}
v(t,x)
=   \int_t^{\mathcal T}R_{s-t}\left[e^{(\mathcal T-s)A_0}G\widetilde C(s,\cdot)\right](x)ds
+ \int_t^{\mathcal T}R_{s-t}\left[\nabla^Gv(s,\cdot)\widetilde C(s,\cdot)\right](x)ds.
\end{align}
Here, $(R_t)_{t\geq0}$ is the Ornstein-Uhlenbeck semigroup defined by $R_t[\Phi](x):=\mathbb E[\Phi(\Xi_t^{0,x})]$, for any $\Phi\in B_b(H;H)$, any $t\geq0$ and any $x\in H$ (see  
  Section \ref{subsec:abs_OUsemigroup_0}), and $\Xi_\tau^{0,x}$ is the Ornstein-Uhlenbeck process defined by means of  \eqref{nonlinear_stochastic_eq_intro} when $\widetilde C=0$. In mild formulation, the process $Y^{t,x,n}$ satisfies
 \begin{align}
 Y^{t,x,n}_\tau
 = & \int_\tau^{\mathcal T}e^{-(s-\tau)A_n}G\widetilde C(s,X_s^{t,x})ds
 + \int_\tau^{\mathcal T}e^{-(s-\tau)A_n}Z_s^{t,x,n}\widetilde C(s,X_s^{t,x})ds \notag \\
& -  \int_\tau^{\mathcal T}e^{-(s-\tau)A_n}Z_s^{t,x,n}d\widehat W_s \notag \\
= & \int_\tau^{\mathcal T}e^{-(s-\tau)A_n}G\widetilde C(s,X_s^{t,x})ds
-\int_\tau^{\mathcal T}e^{-(s-\tau)A_n}Z_s^{t,x,n}dW_s,  \quad \mathbb P{\textup {-a.s.}},
\label{intro_mild_form_BSDE}
 \end{align}
 for every $\tau\in[0,\mathcal T]$. By setting $t=\tau=0$, by applying the operator $e^{\mathcal TA_n}$ to the first and the last side of \eqref{intro_mild_form_BSDE} and by taking \eqref{wq} into account, we get
 \begin{align*}
  \int_\tau^{\mathcal T}e^{(\mathcal T-s)A_n}G\widetilde C(s,X_s^{t,x})ds=u_n^{\mathcal T}(0,x)+\int_0^{\mathcal T} e^{(\mathcal T-s)A_n}\nabla ^G u_n^{\mathcal T}(s,X_s^{0,x})dWs, \quad \P\textup{-a.s.},
 \end{align*}
 for every $\mathcal T\in[0,T]$. By replacing this formula in \eqref{intro_mild_solution} it follows that
 \begin{align}
 \notag
 X_t
 = & e^{tA}x+\int_0^t\left(e^{(t-s)A}-e^{(t-s)A_n}\right)G\widetilde C(s,X_s)ds+\int_0^te^{(t-s)A}GdW_s \\
 & u_n^{t}(0,x)+\int_0^{t} e^{(t-s)A_n}\nabla ^G u_n^{t}(s,X_s^{0,x})dWs, \quad \P{\textup{-a.s.}}, 
 \label{intro_mild_form_2}
 \end{align}
for every $t\in[0,T]$. From there, to obtain \eqref{lipp2} we need that the first integral in the right-hand side of \eqref{intro_mild_form_2} converges to $0$ as $n$ goes to $+\infty$, and that the function $u_n^t$ is smooth enough in order to get Lipschitz estimates of the addends which involve $u_n^t$ in \eqref{intro_mild_form_2}. Both these things are a consequence of Hypothesis \ref{hyp_2_1}, and so estimate \eqref{lipp2} follows.

\subsection{Plan of the paper}
The paper is organized as follows. In Section \ref{sub:abstract_equation} we state the main assumptions on the coefficients of equation \eqref{nonlinear_stochastic_eq_intro}, under which there exists a (unique in law) weak mild solution $(X_t)$ to \eqref{nonlinear_stochastic_eq_intro} (see Hypothesis \ref{hyp_1}). Further, we provide sufficient conditions which ensure existence and uniqueness of a smooth solution $u_n^{\mathcal T}$ to the integral equation \eqref{intro_int_det_eq} for any $n\in\N$ and any $\mathcal T\in(0,T]$ (see Hypothesis \ref{hyp_2_1}). We stress that estimates on the Hilbert-Schmidt norm of $\nabla\nabla^Gu^{\T}_n$, together with the family of systems of FBSDEs \eqref{FBSDE_approssimato_n_intro}, are one of the main tool which we need to prove our result. Finally, we prove a generalized Gronwall Lemma which will be applied in the proof of Theorem \ref{teo:abs_uni1}.

In Section \ref{sub:FBSDE_app} we consider the family of systems of FBSDEs \eqref{intro_mild_form_BSDE} and we show that, under our assumptions, for any $n\in\N$,  any $\mathcal T\in(0,T]$, any $x\in H$ and any $t\in[0,\mathcal T)$ there exists a unique mild solution $(X^{t,x}, Y^{t,x,n}, Z^{t,x,n})$ to \eqref{intro_syst_FBSDE} which satisfies \eqref{wq}.

In Section \ref{sec:path_uniq} we prove the main result of the paper. At first we show that representation \eqref{intro_mild_form_2} holds true for the weak mild solution $(X_t)$. Finally, by means of this representation and of Hypothesis \ref{hyp_2_1}, we prove Theorem \ref{teo:abs_uni1}, which states that there exists a positive constant $c_T$ such that for every $x_1,x_2\in H$  estimate \eqref{lipp2} is satisfied.

Section \ref{sec:OU_smgr_reg} is devoted to provide sufficient conditions on $A,G$ and on $\widetilde C$ which ensure that Hypothesis \ref{hyp_2_1} are verified. We split this section into three parts. In the former we show the existence of a unique smooth solution $u_n^{\mathcal T}$ to equation \eqref{intro_int_det_eq} for any $n\in\N$ and any $\mathcal T\in(0,T]$, while in the second and in the latter we prove the crucial estimate on the Hilbert-Schmidt norm of $\nabla\nabla ^Gu_n^\mathcal T$ when $(A_n)_{n\in\N}$ are the Yosida approximants of $A$ and when $(A_n)_{n\in\N}$ are finite dimensional approximations of $A$, respectively.

Finally, Section \ref{sec:applications} concerns with two concrete models to which our abstract results apply:
\begin{enumerate}[\rm (i)]
\item a stochastic damped {Euler-Bernoulli beam} equation
\begin{align*}
\left\{
\begin{array}{ll}
\displaystyle \frac{\partial ^2y}{\partial t^2}(t)=-\Lambda y(t)-\rho \Lambda^\alpha\frac{\partial y}{\partial t}(t)+ {\Lambda^{-\gamma}}c\left(t,y(t), \frac{\partial y}{\partial t}(t)\right)+{\Lambda^{-\gamma}}\dot{W}(t), & t\in(0,T], \vspace{1mm}\\
y(0)=y_0, \vspace{1mm} \\
\displaystyle \frac{\partial y}{\partial t}(0)=y_1,
\end{array}
\right.
\end{align*}
where $\Lambda:D(\Lambda)\subset U\rightarrow U$ is a positive self-adjoint operator such that $\Lambda^{-\gamma}$ is trace class;
\item a semilinear stochastic heat equation
\begin{align*}
\left\lbrace
\begin{array}{ll}dX_t= \Delta X_t dt + (-\Delta)^{-\gamma/2} \widetilde C(X_t)dt + (-\Delta)^{-\gamma/2} dW_t, & t \in[0,T], \vspace{1mm} \\ 
X_0=x\in H, 
 \end{array}\right.
\end{align*}
where $\Delta$ is the Laplacian operator on $L^2([0,\pi]^d)$ with periodic boundary conditions. 
\end{enumerate}
We note that
\begin{enumerate}
\item in the case of beam equation, the result is completely new;
\item in the case of the heat equation, the result extends the one from \cite{dapr-fl} to the dimension $d=3$. 
\end{enumerate}
Appendix contains minimal energy estimates  for the beam equation which are necessary for our approach. To the best of our knowledge, these estimates are new and of independent interest. They are inspired by the spectral methods used in  \cite{av-las,lasi-trig,trig}.

 \subsection{Notations} 
\label{sub:notazioni}
Throughout the paper we denote by $H,K, U$ real and  separable Hilbert spaces. 

$L(H; K)$ denotes the Banach space of all bounded and linear operators from $H$ into $K$ endowed with the operator norm; we set $L(H) = L(H; H)$. Moreover 
 $L_2(H;K) \subset L(H;K)$ is the Hilbert space of all  Hilbert-Schmidt operators, i.e., the space of operators $T\in L(H;K)$ such that
$$ 
\sum_{k \ge 1} |Te_k|^2_K<+\infty,
$$
where $(e_k)$ is an orthonormal basis of $H$. 
We introduce the norm $\|\cdot\|_{L_2(H;K)}$ 
as
\begin{align*}
\| T\|_{L_2(H,K)}^2 = \sum_{k \ge 1} |Te_k|^2_K, \quad T\in L_2(H;K).
\end{align*}
 We also set $L_2(H) = L_2(H;H)$.
 
If $F:H\rightarrow K$ is G\^ateaux differentiable at $x\in H$, we denote by $\nabla F(x)\in L(H;K)$ its G\^ateaux derivative at $x$ and by $\nabla_k F(x)$ its directional derivative in the direction of $k\in H$, i.e.,
\begin{align*}
\nabla_k F(x):=\lim_{t\rightarrow0}\frac{F(x+tk)-F(x)}{t}, \quad x,k\in H.
\end{align*} 
Let $G:U\rightarrow H$ be a linear bounded operator. We are interested into differentiating along $G$-directions, i.e., directions $k\in H$ such that $k=Ga$ for some $a\in U$. We introduce the notation $\nabla^GF(x):=\nabla F(x)G\in L(U;H)$ and 
\begin{align}
\nabla^G_a F(x)G:=\nabla_{Ga}F(x)=\lim_{t\rightarrow0}\frac{F(x+tGa)-F(x)}{t}, \quad x\in H, \quad a\in U.
\end{align}
\section{The abstract equation}
\label{sub:abstract_equation}
 
We fix $T>0$, and we will  consider the following semilinear stochastic differential equation in the real separable Hilbert space $H$:
\begin{align}
\label{abs_1nonlinear_stochastic_eq}
\left\lbrace
\begin{array}{ll}dX^{t,x}_\tau=AX^{t,x}_\tau d\tau+ G \widetilde C(\tau,X^{t,x}_\tau)d\tau+GdW_\tau, & \tau\in[t,T],\, 0\leq t\leq\tau, \vspace{1mm}\\
X_t^{t,x}=x\in H, \end{array}\right.
\end{align}
where  $W=(W_t)$ is a cylindrical Wiener process on another real separable Hilbert space $U$.  Recall that    
  $ W  $  is formally given by ``$W_t = \sum_{n \ge 1}
  W^{(n)}_t e_n$'' where  $(W^{(n)})_{n \ge 1}   $   are independent real Wiener processes and   $(e_n)$ is a basis of $U$; 
  $W$ defines a Wiener process on any Hilbert space $U_1 \supset U$  with Hilbert-Schmidt embedding (cf. Section 4.1.2 in \cite{DPsecond} and Section 2 in \cite{fute}).    Moreover $A$, $\widetilde C$ and $G$ satisfy the following assumptions.
\begin{hypothesis}
\label{hyp_1} 
\begin{enumerate}[(i)]
\item $A:D(A)\subset H\rightarrow H$ is the infinitesimal generator of a strongly continuous semigroup $(e^{tA})_{t\geq0}$.
\item $G:U\rightarrow H$ is a bounded linear operator.
\item   There exists $\alpha \in (0,1/2)$ such that  
\begin{equation}
\int_{0}^{T}t^{-2\alpha }\|e^{tA} \, G\|^{2}_{L_2(U;H)}\;dt<+ \infty.
\label{e5.17}
\end{equation}
\item 
The function $\widetilde C:[0,T]\times H\rightarrow U$ is bounded and continuous and, moreover,  there exists a positive constant $K$ and  $\beta\in ( 0,1)$ such that
\begin{align} \label{ey}
|\widetilde C(t,x)-\widetilde C (t,y)|_U\leq K|x-y|^\beta_H, \quad x,y\in H, \ t\in[0,T].
\end{align}
\end{enumerate}
\end{hypothesis}
{ Note that condition  \eqref{e5.17}  implies that 
 for any $t>0$ the linear bounded operator
\begin{align}
\label{abstr_conv_stocastica}
{Q_t:=\int_0^t e^{sA}G G^* e^{sA^*}ds:H\rightarrow H } \;\; \text{is a trace class operator.}  
 \end{align}
Condition  \eqref{e5.17} is implicitly assumed also   in \cite{dapr-fl}. It ensures that solutions have continuous paths with values in $H$ (cf. Section 5.3 in  \cite{DPsecond}).
}

Clearly,  $C := G \widetilde  C :[0,T]\times H\rightarrow H $
and $\| C\|_\infty:=\sup_{t\in[0,T], x \in H}| C(t,\cdot)|_{H}<+\infty.$

{We underline that in Hypothesis \ref{hyp_1} it is included the case $U=H$ and $G=I$, if the stochastic convolution $Q_t,\,t>0$, is a trace class operator and \eqref{e5.17} holds. 

\vskip 1mm
 Recall that a  (weak) mild solution  to
(\ref{abs_1nonlinear_stochastic_eq}) is given by $(
\Omega,$ $ {\mathcal F},
 ({ \mathcal F}_{\tau}), \P,$ $ W, X) $, where $(
\Omega, {\mathcal F},$ $ 
 ({\mathcal F}_{\tau}), $ $ \P )$ is a  stochastic basis  
  on {which it is defined}  a 
 {cylindrical} $U$-valued {${\mathcal F}_{\tau}$}-Wiener process  $W$ and
 a continuous ${\mathcal F}_{t}$-adapted $H$-valued
process $X = (X_\tau)$ $   = (X_\tau)_{\tau \in [ t, T] }$   such that
\begin{equation}
X_{\tau}=e^{(\tau - t) A}x+\int_{t}^{\tau}e^{\left(  \tau -s\right)  A}G \widetilde C (s, 
X_{s} )
ds+\int_{t}^{\tau}e^{\left(  \tau -s\right)  A}GdW_{s},\;\;\; \tau \in [t,T],\label{mild}    
\end{equation}
 $\P$-a.s.

 We say that
equation \eqref{abs_1nonlinear_stochastic_eq}  has a  { strong mild solution} if, on every stochastic basis 
$(\Omega,{\cal F}, ({\cal F}_t), P)$ on which there is defined a  cylindrical
${\cal F}_t$-Wiener process $W$ on $U$, 
there exists 
  a weak mild solution. 

Under Hypothesis \ref{hyp_1} the existence of a 
(weak)  mild  solution to problem \eqref{abs_1nonlinear_stochastic_eq} which is unique in law is a direct consequence of the Girsanov theorem.  Recall that  
  
\begin{proposition}\label{prop:esist-weak}
Under Hypothesis \ref{hyp_1}, for any $x\in H$, $t \in [0.T[$, there exists a unique in law  (weak) mild solution  to  equation  \eqref{abs_1nonlinear_stochastic_eq}. 
\end{proposition}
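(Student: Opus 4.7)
The plan is to leverage the structure condition (the drift lies in the range of $G$) and apply the infinite-dimensional Girsanov theorem to transfer between equation \eqref{abs_1nonlinear_stochastic_eq} and its linear Ornstein--Uhlenbeck counterpart, obtaining existence and uniqueness in law simultaneously.

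First I would fix a stochastic basis $(\Omega,\mathcal F,(\mathcal F_\tau),\mathbb P)$ carrying a cylindrical $U$-Wiener process $W$ and construct the OU reference process
\begin{equation*}
\Xi_\tau \,=\, e^{(\tau-t)A}x + \int_t^\tau e^{(\tau-s)A} G \, dW_s, \qquad \tau \in [t,T].
\end{equation*}
Condition \eqref{e5.17} and \cite[Section 5.3]{dpz92} ensure that the stochastic convolution has a continuous $H$-valued modification; hence $\Xi$ is a continuous mild solution of $d\Xi_\tau = A\Xi_\tau d\tau + G\,dW_\tau$, $\Xi_t = x$. Since $\widetilde C$ is bounded, $(\widetilde C(s,\Xi_s))_{s \in [t,T]}$ is a bounded, predictable $U$-valued process and Novikov's condition holds trivially, so
\begin{equation*}
\rho_\tau \,:=\, \exp\!\left(\int_t^\tau \widetilde C(s,\Xi_s)\, dW_s \,-\, \tfrac{1}{2}\!\int_t^\tau |\widetilde C(s,\Xi_s)|_U^2\, ds\right)
\end{equation*}
is a true $(\mathcal F_\tau)$-martingale. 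Setting $d\widetilde{\mathbb P} = \rho_T d\mathbb P$, the cylindrical Girsanov theorem (\cite[Proposition 7.1]{Ondre04}, \cite[Section 10.3]{dpz92}) yields that $\widetilde W_\tau := W_\tau - \int_t^\tau \widetilde C(s,\Xi_s)\, ds$ is a cylindrical $U$-Wiener process under $\widetilde{\mathbb P}$. Rewriting the mild formula for $\Xi$ in terms of $\widetilde W$ shows that $(\Omega,\mathcal F,(\mathcal F_\tau),\widetilde{\mathbb P},\widetilde W,\Xi)$ is a weak mild solution of \eqref{abs_1nonlinear_stochastic_eq}, proving existence.

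For uniqueness in law, I would run the Girsanov argument in reverse. Given any weak mild solution $(\Omega',\mathcal F',(\mathcal F'_\tau),\mathbb P', W',X)$, the boundedness of $\widetilde C$ makes
\begin{equation*}
\hat \rho_\tau \,:=\, \exp\!\left(-\!\int_t^\tau \widetilde C(s,X_s)\, dW'_s \,-\, \tfrac{1}{2}\!\int_t^\tau |\widetilde C(s,X_s)|_U^2\, ds\right)
\end{equation*}
a genuine martingale; under $d\hat{\mathbb P} = \hat \rho_T d\mathbb P'$, the process $\hat W_\tau := W'_\tau + \int_t^\tau \widetilde C(s,X_s)\, ds$ is a cylindrical $U$-Wiener process, and substituting $dW' = d\hat W - \widetilde C\, d\tau$ into the mild equation shows that $X$ is the OU mild solution with initial datum $x$ driven by $\hat W$. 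Hence the law of $(X,\hat W)$ under $\hat{\mathbb P}$ equals that of $(\Xi,W)$ under $\mathbb P$ and is independent of the chosen weak solution. Expressing $\hat \rho_T^{-1}$ in terms of $(X,\hat W)$ via the same substitution exhibits it as a measurable functional of $(X,\hat W)$ alone; the identity
\begin{equation*}
\mathbb E_{\mathbb P'}[F(X)] \,=\, \mathbb E_{\hat{\mathbb P}}\!\left[F(X)\, \hat \rho_T^{-1}\right]
\end{equation*}
for bounded measurable $F$ on $C([t,T];H)$ then shows that the law of $X$ under $\mathbb P'$ depends only on the law of the OU pair, establishing uniqueness in law.

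The argument is essentially textbook, so no step is a real obstacle; the only point requiring care is that the exponential martingale is defined by a stochastic integral against a \emph{cylindrical} Wiener process with a $U$-valued integrand, which forces us to quote the cylindrical form of Girsanov's theorem (hence the references to \cite{Ondre04,dpz92}) rather than the more familiar $Q$-Wiener version. The structure condition $C = G\widetilde C$ is crucial: without it the drift could not be absorbed by a shift of the driving Wiener process.
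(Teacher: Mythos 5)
Your proposal is correct and follows exactly the route the paper takes: the paper's proof is a one-line appeal to the infinite-dimensional Girsanov theorem (citing \cite[Proposition 7.1]{Ondre04} and \cite[Section 10.3]{dpz92}), and your argument simply writes out the details of that same transformation, using the structure condition $C=G\widetilde C$ and the boundedness of $\widetilde C$ in both directions. No discrepancy to report.
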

\begin{proof} The result directly follows from an infinite dimensional version of the Girsanov theorem, see e.g. \cite[Proposition 7.1]{Ondre04} and \cite[Section 10.3]{DPsecond}.
\end{proof}

For any $x\in H$ we consider the Ornstein-Uhlenbeck process $\Xi=(\Xi_t^{0,x})$, i.e. the unique solution to \eqref{abs_1nonlinear_stochastic_eq} with $\widetilde C=0$. The vector-valued Ornstein-Uhlenbeck transition semigroup $(R_t)_{t\geq0}$ associated to $\Xi$ is defined as $R_t[\Phi](x):=\mathbb E\big[\Phi(\Xi_t^{0,x})\big]$ for any $t\geq0$, any $\Phi\in B_b(H, H)$ and any $x\in H$ (cf. \cite{dapr-fl}). 

 {We take any  generator $A_0$ of a $C_0$-semigroup} of linear operators $(e^{tA_0})_{t\geq0}$ in $L(H)$, we fix $\mathcal T \in (0,T]$ and consider the integral equation
\begin{equation}\label{d5_A_0}
v(t,x)
=   \int_t^{\mathcal T}R_{s-t}\left[e^{(\mathcal T-s)A_0}G\widetilde C(s,\cdot)\right](x)ds
+ \int_t^{\mathcal T}R_{s-t}\left[\nabla^Gv(s,\cdot)\widetilde C(s,\cdot)\right](x)ds,
\end{equation} 
 for any $(t,x)\in[0,\mathcal T]\times H$.

We   define the space $\calg^{0,1}([0,\T]\times H;H)$, see \cite{fute} Section 2.2, as the subspace of
  $C_b ([0,\T]\times H;H)$ consisting of all functions $f$ 
which are G\^ateaux differentiable with respect to $x$ and such that the map $\nabla f:[0,\T]\times H\to L(H)$ is strongly continuous and globally bounded.  
 
\begin{definition}
\label{defn:soluzione_int}
A solution to \eqref{d5_A_0}  is a mapping  ${ u = u^{\mathcal T}} \in
\calg^{0,1}([0,\T]\times H;H)$ which solves \eqref{d5_A_0}.
  \end{definition} 
 To prove our main result we  require the existence of a family of operators $(A_n)_{n\in\N}$, generators of {\sl $C_0$-groups} of linear operators   $((e^{tA_n})_{t\in\R})_{n\in\N}$, and for each $n\in\N$ we consider the integral equation \eqref{d5_A_0} with $A_n$ in the place of $A_0$, that is 
\begin{equation}\label{d5}
v(t,x)
=   \int_t^{\mathcal T}R_{s-t}\left[e^{(\mathcal T-s)A_n}G\widetilde C(s,\cdot)\right](x)ds
+ \int_t^{\mathcal T}R_{s-t}\left[\nabla^Gv(s,\cdot)\widetilde C(s,\cdot)\right](x)ds.
\end{equation} 
{ Note that \eqref{d5} is a mild formulation of a    Kolmogorov PDE
   related  to \eqref{abs_1nonlinear_stochastic_eq}; on this aspect see  Remark \ref{rm:comp_PDE} which also compares such parabolic PDE with the  one considered in \cite{dapr-fl}. } 
 
We will denote by $u^{\mathcal T}_n$ the solution of this equation \eqref{d5}. 
On the family of operators $(A_n)_{n\in\N}$ and on the solution of the above integral equation we make the following assumptions.
\begin{hypothesis}\label{hyp_2_1}
\begin{itemize}
\item[(A)]
For any $n\in\N$ the operator $A_n$ generates a  strongly continuous  group of linear and bounded operators $(e^{tA_n})_{t\in\R}\subset L(H)$ such that for any $T>0$ we have
\begin{equation}\label{d3}
 \sup_{t \in [0,T]} \sup_{n \ge 1} \| e^{t A_n}\|_{L(H)} = K_T < \infty, \quad 
\lim_{n \to \infty} e^{t A_n} x = e^{t A} x, \;\; x \in H,\;\; t \ge 0.
\end{equation}
\item[(B)] 
\label{hyp_2_2}
For each $n \in\N $ there exists a  unique solution $u^{\mathcal T}_n$ to \eqref{d5} which verifies:

\noindent (i) there exists $C_T>0$, { independent of $n$ and $\T$,} such that  
\begin{gather} \label{sg} 
 \sup_{ x \in  H} | u^{\mathcal T}_n (0,x + y) - u^{\mathcal T}_n (0,x ) |_{H}\leq \, C_T  \, |y|_H, \quad y\in H.
\end{gather}
 (ii)
 For any $n\in \N$ and any $(t,x)\in[0,\T]\times H$, the map $k\mapsto 
 \nabla ^G_ku_n^{\T}(t,x)\in L_2(U;H)$  and $\nabla ^G_{\cdot}  u_n^{\T}   \in B_b([0,\mathcal  T]\times H;L_2(U;H))$.   Further, we assume that  
 there exists an integrable function $h : (0,\T) \to \R_+$, independent of $n\in\N$, such that for any $n\in\N$ we have 
  \begin{equation}\label{df}  
{ \sup_{x\in H}\|\nabla^G_{\cdot}u^{\mathcal T}_n (t,x + y) -\nabla^G_{\cdot}u^{\mathcal T}_n (t,x ) \|_{L_2(U;H)}^2 \, \leq \, h(\mathcal T-t)\, |y|_H^2, \quad t\in(0,\T), \ y\in H. }
\end{equation}
Further, there exists a positive constant $C=C(T)$ which only depends on $T$ such that $\|h\|_{L^1(0,\T)}\leq C$ for any $\T\in(0,T]$.
\end{itemize}
 \end{hypothesis}   
\begin{remark}  
In order to ensure the validity of (i) and (ii) in (B), a sufficient condition for solutions to \eqref{d5}   is the following one:  $u_n^{\mathcal T}(t,\cdot)$  is  Fr\'echet differentiable on $H$, for any $t\in[0,\mathcal T],$ with  uniformly  bounded  Fr\'echet derivative  $\nabla u_n^{\mathcal T}(t,\cdot)$, i.e.,   
  $$
  \sup_{n \ge 1} \sup_{t \in [0,\T]}  \sup_{x \in H} \,   \| \nabla u_n^{\mathcal T}(t,x) \|_{L(U,H)}      < \infty.
  $$   
 Moreover,  for any $k\in U$,  $t\in [0,\mathcal T]$, the map $x\mapsto \nabla^G_k u^{\mathcal T}_n(t,x)$ is Fr\'echet differentiable on $H$, 
   $\nabla ^G_{\cdot}\,  u_n^{\T}   \in B_b([0,\mathcal  T]\times H;L_2(U;H))$ and   
\begin{equation}\label{dfdf}   
 \sup_{x\in H}\|\nabla_y\nabla^G_{\cdot}u^{\mathcal T}_n (t,x)\|^2_{L_2(U;H)}\leq \, h(\mathcal T -t) \, |y|^2_H, \quad t\in(0,\T), \ y\in H,
\end{equation}
 for some   integrable  function $h: (0,\T) \to \R_+$ with $\|h\|_{L^1(0,\mathcal T)}\leq C$ for any $\mathcal T\in(0,T]$, for some positive constant $C=C(T)$. 
\end{remark}
 Note that for any $x,y\in H$ and $t\in(0,T)$ we have   (using an orthonormal basis $(f_k)$ in $U$)
\begin{align} 
 \|\nabla_y\nabla^G_{\cdot}u^{\mathcal T}_n (t,x)\|_{L_2(U;H)}^2
 =  & 
 \sum_{k \ge 1} |\nabla_y\nabla^G_{f_k}u^{\mathcal T}_n (t,x)|^2_H
 = \sum_{k \ge 1} |\nabla_y\nabla_{G f_k}\, 
 u^{\mathcal T}_n (t,x)|^2_H  \notag
\\
= &  \sum_{j \ge 1}\sum_{k \ge 1}   \langle \nabla_y\nabla_{G f_k}\, 
 u^{\mathcal T}_n (t,x) , e_j\rangle^2_H 
 =  
 \sum_{j \ge 1}\sum_{k \ge 1} \, [\nabla_y \nabla_{G f_k} 
 u^ {\mathcal T }_{n,j} (t,x)]^2  \notag \\
\label{drr}
= & \sum_{j \ge 1} \, |\nabla_y\nabla^{G}  
 u^{{\mathcal T }}_{n,j} (t,x)|^2_U,   
\end{align}   
where $u_{n,j}^{\T}:=\langle u_n^{\T},e_j\rangle$  (we are identifying $L(U; \R)$ with $U$). 
We will prove that the unique mild solution $u_n^{\T}$ of \eqref{d5} satisfies \eqref{dfdf}, and to prove this estimate we will make use of \eqref{drr}.

\begin{remark}
In the case of the wave equation as in \cite{MasPri17} {and \cite{MasPri23}} we can take $A_n=A$ and so $e^{tA_n} = e^{tA}$because $A$ is the generator of a group of operators, while in the case of the damped equation $A_n$
 will be  the Yosida approximations, i.e., 
\begin{align*}
A_n:=nAR(n,A), \quad n\in\N,
 \end{align*}
where $R(\lambda,A)$ is the resolvent operator of $A$, for any $\lambda$ in the resolvent set of $A$. 
Finally, for the stochastic parabolic PDEs considered in \cite{dapr-fl} we will consider $A_n$ as the finite  dimensional approximations of $A$.
\newline We notice that the stochastic damped equation and the stochastic parabolic PDEs considered in \cite{dapr-fl} are reformulated as a stochastic evolution equation in $H$ like \eqref{abs_1nonlinear_stochastic_eq}, with $A$ generator of a strongly continuous semigroup of linear operators, while the Yosida approximants of $A$ and the finite dimensional approximations of $A$ generate a group of linear operators.
\end{remark}

We stress that if the drift $C$ is Lipschitz continuous with respect to $x$, uniformly in $t$, existence of a strong  mild solution follows in a standard way, see e.g. \cite[Theorem 7.6]{DPsecond}.
\begin{proposition}\label{prop:esist-lip}
Assume that $C:[0,T]\times U\rightarrow H$ is continuous, bounded and $h\mapsto C(\tau,h)$ is Lipschitz continuous, uniformly with respect to $\tau$. Then for any $x\in H$ there exist a unique mild solution to equation \eqref{abs_1nonlinear_stochastic_eq}.
\end{proposition}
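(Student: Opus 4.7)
The plan is a standard Banach fixed point argument on a suitable space of adapted $H$-valued processes, following the scheme of \cite[Theorem 7.6]{dpz92}, but I want to stress which pieces of Hypothesis \ref{hyp_1} are used.

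First, I would set up the stochastic convolution $W_A(\tau) := \int_t^\tau e^{(\tau - s)A} G\, dW_s$ and check that it is a well-defined continuous $H$-valued $(\mathcal F_\tau)$-adapted process. Well-definedness as an $H$-valued Gaussian random variable for each $\tau$ follows from Hypothesis \ref{hyp_1}(iii), which via \eqref{abstr_conv_stocastica} gives $\int_0^T \|e^{sA}G\|_{L_2(U;H)}^2 ds < \infty$. Path continuity in $H$ is obtained by the factorization method (cf.\ \cite[Section 5.3]{dpz92}): writing $W_A(\tau)$ using the identity $\int_s^\tau (\tau - r)^{-\alpha}(r-s)^{\alpha - 1} dr = \pi/\sin(\pi \alpha)$ reduces matters to the integrability of $r \mapsto r^{-2\alpha} \|e^{rA}G\|_{L_2(U;H)}^2$, which is precisely \eqref{e5.17} with $\alpha \in (0,1/2)$.

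Next, let $\mathcal H_T$ denote the Banach space of $H$-valued continuous $(\mathcal F_\tau)$-adapted processes $Y = (Y_\tau)_{\tau \in [t,T]}$ with norm $\|Y\|_\lambda^2 := \sup_{\tau \in [t,T]} e^{-\lambda(\tau - t)} \E|Y_\tau|_H^2$ for a parameter $\lambda > 0$ to be chosen. Define the map $\Gamma : \mathcal H_T \to \mathcal H_T$ by
\begin{equation*}
\Gamma(Y)_\tau := e^{(\tau - t)A} x + \int_t^\tau e^{(\tau - s) A} C(s, Y_s)\, ds + W_A(\tau), \qquad \tau \in [t,T].
\end{equation*}
Boundedness of $C$ and the semigroup bound $\|e^{sA}\|_{L(H)} \le M e^{\omega s}$ show $\Gamma(Y) \in \mathcal H_T$, while path continuity of the deterministic Bochner integral combined with continuity of $W_A$ gives continuous paths.

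For the contraction estimate, the Lipschitz assumption on $C(s,\cdot)$ with constant $L$ yields, for $Y^1, Y^2 \in \mathcal H_T$,
\begin{equation*}
\E |\Gamma(Y^1)_\tau - \Gamma(Y^2)_\tau|_H^2 \le M^2 e^{2|\omega| T} L^2 (T - t) \int_t^\tau \E |Y^1_s - Y^2_s|_H^2\, ds,
\end{equation*}
by Cauchy--Schwarz. Multiplying by $e^{-\lambda(\tau - t)}$ and taking supremum shows $\|\Gamma(Y^1) - \Gamma(Y^2)\|_\lambda^2 \le \frac{c_T}{\lambda} \|Y^1 - Y^2\|_\lambda^2$ for a constant $c_T$ independent of $\lambda$, so $\Gamma$ is a strict contraction for $\lambda$ large enough. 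The unique fixed point is the required mild solution, and pathwise uniqueness follows from the same estimate applied to any two solutions.

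The only nontrivial point is the continuity of $W_A$ in $H$ under the sole assumption \eqref{e5.17}; everything else is routine. Since Hypothesis \ref{hyp_1}(iii) is part of the standing hypotheses, this step causes no trouble.
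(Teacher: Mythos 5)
Your argument is correct and is exactly the route the paper intends: the paper gives no proof of Proposition \ref{prop:esist-lip}, simply remarking beforehand that existence follows ``in a standard way, see e.g.\ \cite[Theorem 7.6]{dpz92}'', and your contraction-mapping scheme on a weighted space of adapted processes, together with the factorization method for the continuity of the stochastic convolution under \eqref{e5.17}, is precisely that standard argument. The only cosmetic point is that you reuse the symbol $\Gamma$ for the solution map, which collides with the minimal-energy operator $\Gamma(t)=Q_t^{-1/2}e^{tA}$ of Hypothesis \ref{hyp_22}; choose a different letter.
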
 

To conclude this section, we provide the following generalization of the Gronwall lemma which will be used in the sequel  (see \cite[Lemma 3.1]{FeFlandoli2013}).
\begin{lemma}  
\label{lem:gron_mod}  
 Let $f, u, v : [0,T] \to \R$ be  bounded measurable functions. Moreover, $f: [0,T] \to \R_+$. Let $C \ge 0$ and $g,h :  (0,T) \to \R_+$ be  integrable functions. 
 \begin{enumerate}[(i)]
 \item If for any $t\in[0,T]$ 
$$
v(t) \le f(t) + \int_t^T g(s-t) v(s) ds,\;\;\; t \in [0,T],
$$
then, for any $t \in [0,T]$,  
\begin{align*}
v(t)\leq f(t) + \; e^{\|g\|_{L^1(0,T)}} \, \int_t^T f(s) g(s-t)  ds.
\end{align*}
\item  If for any $t\in[0,T]$   
$$ 
u(t) \le C + \int_0^t h(t-s) u(s) ds,\;\;\; t \in [0,T],
$$
then
\begin{align*}
u(t)\leq C\left(1+\|h\|_{L^1(0,T)}e^{\|h\|_{L^1(0,T)}}\right),\;\; { t \in [0,T]}.
\end{align*}
\end{enumerate}
\end{lemma}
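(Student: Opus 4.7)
My plan is to prove both parts by iteration of the Volterra-type inequalities, leveraging convolution bounds for the iterated kernels.

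For part (i), I would introduce the backward Volterra operator $Kv(t) := \int_t^T g(s-t) v(s)\,ds$ and substitute the inequality $v \le f + Kv$ into itself repeatedly. After $n$ iterations, applying Fubini--Tonelli yields
\begin{equation*}
v(t) \le f(t) + \sum_{k=1}^n \int_t^T g^{*k}(s-t) f(s)\,ds + \int_t^T g^{*(n+1)}(s-t) v(s)\,ds,
\end{equation*}
where $g^{*k}$ is the $k$-fold convolution of $g$ on $\R_+$. Since $v$ is bounded and $\|g^{*k}\|_{L^1(0,T)}$ decays fast enough on compact intervals (a consequence of the ordered simplex structure of the integration domain, which has volume $T^k/k!$), the remainder tends to zero as $n \to \infty$. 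Summing the remaining series and factoring one copy of $g$ out of each term then gives $v(t) \le f(t) + e^{\|g\|_{L^1(0,T)}} \int_t^T f(s) g(s-t)\,ds$.

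For part (ii), the same strategy applied to the forward Volterra operator $K u(t) := \int_0^t h(t-s) u(s)\,ds$ and the inequality $u \le C + Ku$ produces
\begin{equation*}
u(t) \le C\sum_{k=0}^n \|h^{*k}\|_{L^1(0,t)} + \int_0^t h^{*(n+1)}(t-s) u(s)\,ds,
\end{equation*}
where $\|h^{*0}\|_{L^1(0,t)} := 1$ by convention. The crucial technical estimate, $\|h^{*k}\|_{L^1(0,T)} \le \|h\|_{L^1(0,T)}^k / (k-1)!$ for $k \ge 1$, which I would prove by induction using Fubini and the nonnegativity of $h$, leads upon summation over $k \ge 1$ to the factor $\|h\|_{L^1(0,T)}\, e^{\|h\|_{L^1(0,T)}}$, and boundedness of $u$ controls the remainder; together these give the claimed bound.

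I expect the sharp factorial-type decay of $\|g^{*k}\|_{L^1(0,T)}$ and $\|h^{*k}\|_{L^1(0,T)}$ to be the main obstacle, since the naive estimate $\|g^{*k}\|_{L^1(0,T)} \le \|g\|_{L^1(0,T)}^k$ does not suffice to make the series summable in general; the gain of $1/(k-1)!$ has to be extracted from the ordered structure of the iterated Volterra integral, exploiting that the integration runs over a $k$-simplex of measure $T^k/k!$ rather than the full cube.
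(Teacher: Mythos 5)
The paper disposes of this lemma in two lines: part (i) is taken verbatim from \cite[Lemma 3.1]{FeFlandoli2013}, and part (ii) is reduced to part (i) by the time reversal $w(t):=u(T-t)$, which turns the forward Volterra inequality into a backward one. Your proposal instead tries to give a self-contained proof by Neumann/Picard iteration. The iteration set-up itself (Fubini, $k$-fold convolutions, the identity $K^kC(t)=C\|h^{*k}\|_{L^1(0,t)}$) is correct, but the argument hinges on the ``crucial technical estimate'' $\|h^{*k}\|_{L^1(0,T)}\le \|h\|_{L^1(0,T)}^k/(k-1)!$, and this is false for kernels that are merely nonnegative and integrable. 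The simplex-volume gain of $1/(k-1)!$ is available when the kernel is \emph{bounded}, say $h\le M$, since then $h^{*k}(t)\le M^k t^{k-1}/(k-1)!$; but an $L^1$ kernel can concentrate its mass near the origin, and the factorial gain then disappears entirely. For instance, with $h=(c/\ep)\mathbf 1_{[0,\ep]}$ one has $\|h^{*k}\|_{L^1(0,T)}=c^k=\|h\|_{L^1(0,T)}^k$ as soon as $k\ep\le T$, so your remainder term does not tend to $0$ and the series you want to sum need not converge when $\|h\|_{L^1}\ge 1$. This is not a removable technicality: some additional structure on the kernel (boundedness, or the power-law form $t^{-\theta}$, $\theta<1$, actually used in the paper, for which the iterated kernels decay like $1/\Gamma(k(1-\theta))$ by the Beta-function identity) is genuinely needed, and that is exactly what is imported from the cited reference rather than re-proved here.

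There is a second gap in part (i). Even on kernels for which the resolvent series $R=\sum_{k\ge1}g^{*k}$ converges, the iteration only yields $v(t)\le f(t)+\int_t^T R(s-t)f(s)\,ds$, whereas the stated conclusion requires the pointwise domination $R(r)\le e^{\|g\|_{L^1(0,T)}}\,g(r)$. ``Factoring one copy of $g$ out of each term'' does not produce a multiple of $\int_t^T f(s)g(s-t)\,ds$ with $f(s)$ still inside the integral: writing $g^{*k}=g*g^{*(k-1)}$ the inner variable of the factored copy of $g$ is not $s-t$, and in fact $g^{*2}$ can be strictly positive where $g$ vanishes (take $g=\mathbf 1_{[0,\ep]}$ and look at $r\in(\ep,2\ep)$), so no pointwise comparison of $R$ with $g$ holds in general. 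Both points would have to be repaired — most naturally by restricting the class of kernels as in the cited lemma, or by proving (i) first and then obtaining (ii) by the paper's time-reversal trick rather than by an independent iteration.
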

\begin{proof} Assertion (i) directly follows from \cite[Lemma 3.1]{FeFlandoli2013}. Assertion (ii) can be deduced by (i) as follows. 
{  We have} $u(t) \le C + \int_0^t h(r) u(t-r) dr$; define   $w (t)= u(T-t)$. From the assumptions the function $s\mapsto h(t-s)w(s)\in L^1(t,T)$ for any $t\in[0,T]$. We get 
\begin{gather*}
w(t) \le C + \int_0^{T-t} h(r) u(T - t -r) dr 
=
C + \int_0^{T-t} h(r) w(t + r) dr
\\
= C + \int_t^{T} h(s-t) w(s) dr,
\end{gather*}
and (ii) follows by (i).
 \end{proof}

\begin{remark}
We recall that under the assumptions of Lemma \ref{lem:gron_mod}, if
\begin{align*}
u(t)\leq C+\int_0^t h(s)u(s)ds, \quad t\in[0,T],
\end{align*}
then
\begin{align*}
u(t)\leq C\left(1+\|h\|_{L^1(0,T)}e^{\|h\|_{L^1(0,1)}}\right), \quad t\in[0,T].
\end{align*}
\end{remark}

{ In the following remark we compare the Kolmogorov equations \eqref{d5}  used in the  present paper and the ones considered in \cite{dapr-fl}.} 


\begin{remark}\label{rm:comp_PDE}
Let $\mathcal T\in(0,T]$ and let $n\in\N$. Formal computations give that the $H$-valued solution $u_n^{\T}$ of \eqref{d5} formally solves the equation
\begin{align} 
\label{formal_PDE}
\left\{ 
\begin{array}{ll}
\displaystyle \frac{\partial u_n^{\mathcal T}(t,x)}{\partial t}+\mathcal L_t[  u_n^{\mathcal T}(t,\cdot)](x)
= -e^{(\mathcal T-t)A_n}G\widetilde C(t,x), & x\in H,\ t\in[0,\mathcal T], \vspace{1mm}
 \\
 u_n^{\mathcal T}(\mathcal T,x)=0,   & x\in H,
\end{array}
\right.   
\end{align}     
where $\mathcal L_tf(x):=\frac12{\rm Tr}[GG^*\nabla^2f(x)]+\langle Ax,\nabla f(x)\rangle+\langle G\widetilde C(t,x),\nabla f(x)\rangle$, for any $t\in[0,T]$ and any $x\in H$, and $f$ is a regular function.   
 Kolmogorov equations similar to \eqref{formal_PDE} are considered in \cite{dapr-fl} for the study of semilinear  parabolic SPDEs (however, note that in equation (6) of \cite{dapr-fl} the term $-e^{(\mathcal T-t)A_n} G \widetilde C$ is replaced by  $G \widetilde    C$).        
 
 On the other hand, the function $v_n:=e^{-(\mathcal T-t)A_n}u_n^{\mathcal T}$ formally solves
\begin{align*}  
\left\{ 
\begin{array}{ll}
\displaystyle \frac{\partial v_n^{\mathcal T}(t,x)}{\partial t}+\mathcal L_t[v_n^{\mathcal T}(t,\cdot)](x)
= A_n v_n^{\mathcal T}(t,x)-G\widetilde C(t,x), & x\in H,\ t\in[0,\mathcal T], \vspace{1mm}\\
v_n^{\mathcal T}(\mathcal T,x)=0,  & x\in H,
\end{array}
\right.
\end{align*}
which is similar to the equation (formally) solved by the function $v$ considered in \cite[Remark 6.2]{MasPri17}.
\end{remark}

\section{Ornstein-Uhlenbeck processes and   approximated FBSDEs}
\label{sub:FBSDE_app}

In this section we consider a family of FBSDEs on the time interval $[t,\mathcal T]$, with $0\leq t<\mathcal T\leq T$. Namely in a complete probability space $(\Omega, \calf, \P)$, for any $n\in\N$ and for $\tau\in[t,\mathcal T]$ we consider the following system of FBSDEs with forward and backward equations both taking values in $H$,  given by
\begin{align}
\label{abs_FBSDE_approssimato_n}
\left\{
\begin{array}{ll}
d\Xi_\tau^{t,x}=A\Xi_\tau^{t,x}d\tau+GdW_\tau,  & \tau\in[t,\T], \vspace{1mm}\\
\Xi_\tau^{t,x}=x, & \tau\in[0,t], \vspace{1mm}\\
 -dY_\tau^{t,x,n}=-A_nY_\tau^{t,x,n}d\tau+G\widetilde C(\tau,\Xi_\tau^{t,x})d\tau  
+Z_\tau^{t,x,n}\widetilde C(\tau,\Xi_\tau^{t,x})d\tau-Z_\tau^{t,x,n}dW_\tau,  & \tau\in[0,\T], \vspace{1mm} \\  
Y_{\mathcal T}^{t,x,n}=0.
\end{array}
\right.
\end{align}
The solution to the forward equation is the so called Ornstein-Uhenbeck process $\Xi=(\Xi_\tau^{t,x})$ and it is nothing else than equation \eqref{abs_1nonlinear_stochastic_eq} with drift $C$ equal to $0$. 
 Moreover $G$, $\widetilde C$ and $W$ are the same as in \eqref{abs_1nonlinear_stochastic_eq}. 
Finally $(A_n)_{n\geq 1}$ are given in Hypothesis \ref{hyp_2_1}, part $(A)$.

\begin{remark} \label{MaPr}  
 We recall that
under the general assumptions of the present paper we cannot consider the BSDE with $A$ instead of $A_n$ because $-A$ is not the generator of a semigroup of operators (see \cite{Gua1}). This case has been considered in \cite{MasPri17} since $A$ is the generator of a group of operators, and here we generalize the method introduced in \cite{MasPri17} to the case of $A$ generator of a semigroup of linear operators. 
\end{remark}
  
Let $n\in\N$. Concerning the backward equation in the FBSDE (\ref{abs_FBSDE_approssimato_n}), its precise meaning  is given by its mild formulation: $\P$-a.s. the pair of processes $(Y^{t,x,n}, Z^{t,x,n})$ satisfies
 \begin{align}
 Y_{\tau}^{t,x,n}= & \int_\tau^{\mathcal T}e^{-(s-\tau){A_n}}G \widetilde C(s,\Xi^{t,x}_s)\,ds+\int_\tau^{\mathcal T}e^{-(s-\tau){A_n}}  Z_s^{t,x,n}\, \widetilde C(s,\Xi^{t,x}_s)\,ds \notag \\
& -\int_\tau^{\mathcal T}e^{-(s-\tau){A_n}} Z^{t,x,n}_{s}\;d W_s,
 \label{abs_FBsystem_1-mild}
\end{align}
for any $\tau\in [t,\mathcal T]$   
 (cf. \cite{fute}, \cite{Gua1}, \cite{HuPeng} and the references therein).  Notice that in order to give sense to the BSDE in (\ref{abs_FBSDE_approssimato_n}) we need that $-A_n$ is the generator of a $C_0$-semigroup of bounded linear operators, and this is true if we assume that Hypothesis \ref{hyp_2_1}, part $(A)$ is satisfied. 
 
  Concerning equation \eqref{abs_FBsystem_1-mild} recall that  
 we endow $(\Omega, \calf, \P)$ with the natural filtration $({\cal F}_t^W)$ of $W$ (i.e., ${\cal F}_t^W$ is the smallest $\sigma$-algebra generated by $W^{(n)}_s$, ${n \ge 1}$ and $0 \le s \le t$)  augmented in the usual way with the family of $\P$-null
 sets of $\calf$. All the concepts of measurability, e.g. predictability, are referred to this filtration.
 
 The solution of \eqref{abs_FBsystem_1-mild} will be a pair of processes $(Y^{t,x,n}, Z^{t,x,n})\in L^2_\calp(\Omega; C([0,\T];H))\times L^2_\calp(\Omega\times[0,\T];L_2(U;H))$   (see Proposition \ref{Teo:ex-regdep-BSDE}), where $L^2_\calp(\Omega; C([0,\T],H))$ is the Banach space of all predictable $H$-valued processes $Y$ with continuous paths and such that
$$
 \E [ \sup_{\tau\in[0,\T]}\vert Y^{}_\tau\vert^2] = \| Y\|_{L^2_\calp(\Omega, C([0,\T];H))}^2 < \infty, 
$$
and $L^2_\calp(\Omega\times[0,\T];L_2(U,H))$ is the  usual $L^2$-space    of predictable processes $Z$ with values in $L_2(U,H)$.
 We sum up in the following proposition existence results for equation \eqref{abs_FBsystem_1-mild}.
\begin{proposition}\label{Teo:ex-regdep-BSDE}
Assume Hypotheses \ref{hyp_1} and \ref{hyp_2_1} hold true. Then, for any $n\in\N$ the BSDE \eqref{abs_FBsystem_1-mild} admits a unique solution  
$(Y^{t,x,n},Z^{t,x,n})\in L^2_\calp(\Omega; C([0,\T];H)) \times L^2_\calp(\Omega\times[0,\T];L_2(U;H))$, satisfying 
 \begin{equation}\label{est-YZ}
  \E\big [ \sup_{\tau\in[0,\T]}\vert Y^{t,x,n}_\tau\vert^2 \big ]+\E\int_0^{\T}\Vert Z_\tau^{t,x,n}\Vert^2_{L_2(U,H)} d \tau
  \leq C_{\T,n} \| C\|_{\infty},
   \end{equation}
   where $C_{\T,n}$ is a positive constant which depends  also on $\T$ and $n$, and the map:  $(t,x)\mapsto Y_t^{t,x}  $,    $[0,\T]\times  H \to H$, is deterministic.
 \end{proposition}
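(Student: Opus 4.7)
The plan is to invoke the standard theory of mild BSDEs in Hilbert spaces with Lipschitz drivers, and then obtain the deterministic nature of the map $(t,x)\mapsto Y_t^{t,x,n}$ by an independence argument. Fix $t\in[0,\T)$, $x\in H$ and $n\in\N$. By Hypothesis \ref{hyp_2_1}(A), $A_n$ generates a $C_0$-group, so $-A_n$ generates a $C_0$-semigroup $(e^{-\tau A_n})_{\tau\ge 0}$ that is bounded on $[0,\T]$ by some constant $M_n=M_{n,\T}$ (possibly larger than $K_T$). Thus the mild formulation \eqref{abs_FBsystem_1-mild} is well-posed in the sense of \cite{fute, Gua1, HuPeng}.

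The driver
\[
\psi(\tau,\omega,y,z):= G\widetilde C(\tau,\Xi_\tau^{t,x}(\omega))+z\,\widetilde C(\tau,\Xi_\tau^{t,x}(\omega)),\qquad y\in H,\ z\in L_2(U;H),
\]
is independent of $y$, Lipschitz in $z$ with constant $\|\widetilde C\|_\infty$, and satisfies $|\psi(\tau,\omega,0,0)|_H\le \|G\|_{L(U;H)}\|\widetilde C\|_\infty$. Existence and uniqueness of $(Y^{t,x,n},Z^{t,x,n})\in L^2_\calp(\Omega;C([0,\T];H))\times L^2_\calp(\Omega\times[0,\T];L_2(U;H))$ then follow by a standard contraction argument: one defines $(Y^{(k+1)},Z^{(k+1)})$ from $(Y^{(k)},Z^{(k)})$ via the martingale representation applied to
\[
N_\tau^{(k)}:=\Etau\!\!\int_t^{\T} e^{-(s-t)A_n}\bigl[G\widetilde C(s,\Xi_s^{t,x})+Z_s^{(k)}\widetilde C(s,\Xi_s^{t,x})\bigr]ds,
\]
and shows that the resulting map is a contraction on $L^2_\calp(\Omega\times[t,\T];L_2(U;H))$ endowed with a suitable exponentially weighted norm (the weight absorbs the Lipschitz constant times $M_n^2$). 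Setting the solution to $0$ on $[0,t]$ extends it to the required space. The a priori bound \eqref{est-YZ} follows from the standard energy estimate for mild BSDEs: raising $|Y_\tau^{t,x,n}|_H^2$, taking expectations, using $\|e^{-\tau A_n}\|_{L(H)}\le M_n$, Young's inequality on the $Z\,\widetilde C$ term, BDG on the stochastic integral, and Gr\"onwall, so the constant $C_{\T,n}$ is naturally allowed to depend on $\T$ and $n$.

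For the determinism of $Y_t^{t,x,n}$, observe that the forward process restricted to $[t,\T]$,
\[
\Xi_\tau^{t,x}=e^{(\tau-t)A}x+\int_t^\tau e^{(\tau-s)A}G\,dW_s,\qquad \tau\in[t,\T],
\]
is measurable with respect to $\calg^t:=\sigma(W_s-W_t:\,s\in[t,\T])$, which is independent of $\calf_t$. Running the Picard iteration above inside the $\calg^t$-adapted subspace of $L^2_\calp(\Omega\times[t,\T];L_2(U;H))$ (which is stable under the contraction because the martingale representation with respect to the increments stays inside this subspace), one produces a solution that is $\calg^t$-measurable; by uniqueness, this coincides with $(Y^{t,x,n},Z^{t,x,n})$ on $[t,\T]$. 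Therefore $Y_t^{t,x,n}$ is simultaneously $\calf_t$-measurable (by adaptedness) and independent of $\calf_t$, hence $\P$-a.s.\ constant, which gives the claimed deterministic function $(t,x)\mapsto Y_t^{t,x,n}$.

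The main obstacle is purely bookkeeping rather than conceptual: one has to track that the contraction can be carried out in the $\calg^t$-adapted subspace so as to obtain the determinism statement, and one has to accept that at this stage the estimate constants depend on $n$ through $M_n=\sup_{\tau\in[0,\T]}\|e^{-\tau A_n}\|_{L(H)}$ (which is finite for each $n$ since $A_n$ generates a group, but not uniformly controlled by Hypothesis \ref{hyp_2_1}(A)). The uniform-in-$n$ control of $(Y^{t,x,n},Z^{t,x,n})$ needed later will come from the identification with $u_n^{\T}$ and $\nabla^G u_n^\T$ via Hypothesis \ref{hyp_2_1}(B), not from this proposition.
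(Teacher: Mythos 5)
Your proposal is correct and follows essentially the same route as the paper: the paper simply cites Hu--Peng and Guatteri for existence, uniqueness and the a priori estimate (the driver being affine in $z$ and bounded), and establishes determinism of $Y_t^{t,x,n}$ by exactly your independence argument, namely that $Y_t^{t,x,n}$ is measurable with respect to both $\calf_t$ and the $\sigma$-algebra generated by the increments $W_r-W_t$, $r\in[t,\T]$, hence a.s.\ constant. Your unpacking of the Picard iteration and of why the solution stays adapted to the increment filtration is a faithful expansion of what the cited references provide.
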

\begin{proof} Existence and uniqueness of a solution directly come from Lemma 2.1 and Proposition 2.1
in \cite{HuPeng}, that we can apply since
$C$ is bounded. Estimate (\ref{est-YZ}) follows from \cite{GuaTess}, Remark 4.5, estimate (4.19). Since the process $\Xi^{t,x}$ is ${\cal F}_{t,T}^W$-measurable (where ${\cal F}_{t,T}^W$ is the $\sigma$-algebra generated by $W_r - W_t$, $r \in [t,T]$, augmented with the $\P$-null sets), it turns out that $Y^{t,x}_t$ is measurable both with respect to ${\cal F}_{t, T }^W$ and ${\cal F}_t$; it follows that $Y^{t,x}_t$ is deterministic. 
\end{proof} 

Next we prove an identification property that in the present paper we will apply to $u_n^{\T}$, solution to equation \eqref{d5}, which is the analogous of the  identification formulae proved e.g. in \cite{fute} for real valued  functions  (see also \cite{Mas-Ban} for the case of functions defined on Banach spaces). 

\begin{lemma}
\label{prop:abs_identification_formulae_tilde} Let $v:\left[  0,\mathcal T\right]  \times H\longrightarrow H$ be a continuous function such that for every $t\in\left[0,\T\right]$, $v\left(  t,\cdot\right)$ is G\^ateaux differentiable and the map $\left(  t,x\right)  \mapsto\nabla v\left(  t,x\right)  $ is Borel measurable.  Let us fix $(t,x) \in [0, \T] \times H$ and   let $\Xi^{t,x} $ be the Ornstein-Uhlenbeck process defined in Section  \ref{sub:abstract_equation}. If $\psi$ is a square integrable predictable process and  $\bar Z \in L^2_\calp(\Omega \times [0,\T]; L_2(U,H))$ and if $v\left(  \tau,\Xi_{\tau}^{t,x}\right)  $ admits the representation  
\begin{equation} \label{v identific con Z}
v\left(  \tau,\Xi_{\tau}^{t,x}\right)  =v\left( \mathcal  T,\Xi_{\mathcal T}^{t,x}\right)  +\int_{\tau}^{\mathcal T}\psi_{s}ds-\int_{\tau}^{\mathcal T} \bar Z_{s}dW_{s},\quad\tau\in\left[  {0},\mathcal T\right]  ,
\end{equation}
then {  $\P$-a.s},     $\nabla^G v\left(  \tau,\Xi^{t,x}_{\tau}\right)  =\bar Z_{\tau}^{}$,  for a.e. $\tau\in[{0},\T]$.
\end{lemma}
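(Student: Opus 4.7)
The idea is to reduce the $H$-valued identification to the known scalar-valued identification formula (proved e.g.\ in \cite{fute}, or in the Banach-valued version \cite{Mas-Ban}) by testing against an orthonormal basis of $H$, and then reassembling.

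First, fix an orthonormal basis $(e_j)_{j\ge 1}$ of $H$ and set $v_j(\tau,x):=\langle v(\tau,x),e_j\rangle_H$, $\psi^{(j)}_\tau:=\langle\psi_\tau,e_j\rangle_H$, and $\zeta^{(j)}_\tau:=\bar Z_\tau^{*} e_j\in U$. Each $v_j$ inherits from $v$ continuity on $[0,\T]\times H$ and G\^ateaux differentiability in $x$ with Borel-measurable gradient $\nabla v_j(\tau,x)\in H$; the process $\psi^{(j)}$ is predictable and square integrable; $\zeta^{(j)}\in L^2_{\calp}(\Omega\times[0,\T];U)$ since $\bar Z\in L^2_{\calp}(\Omega\times[0,\T];L_2(U,H))$. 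Taking the $H$-inner product with $e_j$ in \eqref{v identific con Z} yields, $\P$-a.s.,
$$v_j(\tau,\Xi^{t,x}_\tau)=v_j(\T,\Xi^{t,x}_\T)+\int_\tau^{\T}\psi^{(j)}_s\,ds-\int_\tau^{\T}\langle\zeta^{(j)}_s,dW_s\rangle_U,\qquad \tau\in[0,\T],$$
which is precisely the scalar framework to which the identification formula of \cite{fute} applies.

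Second, that scalar identification applied for each fixed $j$ produces a $\P$-null set $N_j$ and a Lebesgue-null set $E_j\subset[0,\T]$ outside of which $\nabla^G v_j(\tau,\Xi^{t,x}_\tau)=\zeta^{(j)}_\tau$ in $U$, i.e.\ $G^{*}\nabla v_j(\tau,\Xi^{t,x}_\tau)=\bar Z^{*}_\tau e_j$. Setting $N:=\bigcup_j N_j$ and $E:=\bigcup_j E_j$, both still negligible, for every $\omega\notin N$, every $\tau\in[0,\T]\setminus E$, every $u\in U$ and every $j$,
$$\langle\bar Z_\tau u,e_j\rangle_H=\langle u,\bar Z^{*}_\tau e_j\rangle_U=\langle u,G^{*}\nabla v_j(\tau,\Xi^{t,x}_\tau)\rangle_U=\langle\nabla v(\tau,\Xi^{t,x}_\tau)Gu,e_j\rangle_H=\langle\nabla^G v(\tau,\Xi^{t,x}_\tau)u,e_j\rangle_H.$$
By totality of $(e_j)$ we conclude $\bar Z_\tau u=\nabla^G v(\tau,\Xi^{t,x}_\tau)u$ for every $u\in U$, hence the required equality in $L_2(U,H)$ (the left-hand side is Hilbert--Schmidt by hypothesis, so the right-hand side is too).

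\textbf{Main obstacle.} The only nontrivial step is invoking the scalar identification in Step~2. Its proof in \cite{fute} perturbs the forward OU process via an infinite-dimensional Girsanov transform with drift $Gh_s$, using crucially that the perturbation lies in $\mathrm{Range}(G)$, and then extracts $\nabla^G v_j$ as the limit of the perturbed identity as the perturbation parameter tends to zero. One has to verify that this argument only relies on properties of $\Xi^{t,x}$ available under Hypothesis \ref{hyp_1} (in particular \eqref{e5.17}, which ensures the stochastic convolution has continuous $H$-valued paths and the Girsanov exponential is a true martingale for bounded $h$). Modulo this verification the scheme above is routine.
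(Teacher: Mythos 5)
Your argument is correct and is essentially the paper's own proof: the paper likewise scalarizes by taking $\langle v(\tau,\cdot),h\rangle_H$, identifies the $U$-valued integrand via the joint quadratic variation with $W^{\xi}$ computed in two ways (which is exactly the content of the scalar identification formula of \cite{fute} that you invoke as a black box), and reassembles using separability of $H$. The only cosmetic difference is that you project onto an orthonormal basis while the paper uses an arbitrary $h\in H$; your caveat about verifying the hypotheses of the scalar lemma is the same point the paper glosses over with ``arguing as in \cite[Lemmata 6.3 \& 6.4]{fute}''.
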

\begin{proof} The result can be seen as an extension of  \cite[Proposition 5.6]{fute} to the case of an $H$-valued BSDE, and for this extension we use techniques similar to the ones in \cite{MasPri17}.
Let  $\xi  \in  U$ and consider the real Wiener process $(W^\xi_\tau)_{\tau\ge 0}$, where
 \[
  W^\xi_\tau:= \langle \xi , W_{\tau} \rangle_U. 
\] 
Let $h \in H$, we set  
$$
v^h(\tau, x):=\langle v(\tau, x),h\rangle_H,\quad 0\leq t\leq \tau\leq \mathcal T,\, x\in H,
$$
and we study  the joint quadratic variation between the real process $v^h\left(  \cdot,\Xi^{t,x}\right) $ and $ W^\xi$. Since
\begin{align*}
v^h\left(  \tau,\Xi_{\tau}^{t,x}\right)  
=& v^h\left( \mathcal  T,\Xi_{\mathcal T}^{t,x}\right)  +\int_{\tau}^{\mathcal T}\langle\psi_{s},h\rangle_H\,ds-\int_{\tau}^{\mathcal T} \langle \bar Z_{s},h\rangle_H\,dW_s \\
= & v^h\left( 0,\Xi_{0}^{t,x}\right)  -\int_{0}^{\tau}\langle\psi_{s},h\rangle_H\,ds+\int_{0}^{\tau} \langle \bar Z_{s},h\rangle_H\,dW_s,\quad\tau\in\left[  0,\mathcal T\right] ,
\end{align*}
we find 
\begin{equation} \label{joint}
\<v^h\left(  \cdot,\Xi^{t,x}\right), W^{\xi}\>_{\tau} = \int_{{0}}^\tau  \langle \bar Z^{}_{s} \xi ,   h \rangle_H ds, \;\; \tau \in [0,\T], \; \P\text{-a.s.}  
\end{equation}
Now we compute the joint quadratic variation in a different way, arguing as in \cite[Lemmata 6.3 \& 6.4]{fute} and we obtain that the real process $v^h\left(  \cdot,\Xi^{t,x}\right) $ admits joint quadratic variation with $W^{\xi}$ given by 
\[
 \< v^h(\cdot, \Xi^{t,x}), W^{\xi}\>_{\tau}=
 \int_0^\tau \nabla^G v^h(s,\Xi_s^{t,x}) \xi \,ds 
 = \int_0^\tau  \langle \nabla^G  v (s,\Xi_s^{t,x}) \xi, h \rangle \,ds, \;\; \tau \in [0,\T].
\]
Comparing this formula with \eqref{joint} we get that for a.e. $s \in [0,T]$ we have, $\P$-a.s.,
$$
\langle \bar Z^{}_{s} \xi ,   h \rangle 
  = \langle  \nabla^G  v(s,\Xi_s^{t,x}) \xi, h \rangle.
$$
Since $H$ is separable, for any $\xi\in U$ we get   { $\P$-a.s.}  $\bar Z_s^{}\xi=\nabla^G  v(s,\Xi_s^{t,x}) \xi $,   for a.e. $s \in [0,\T]$. The assertion now follows.
\end{proof}

Next we are going to apply the previous Lemma \ref{prop:abs_identification_formulae_tilde} to $u_n^{\T}$ solution to equation \eqref{d5}. We assume that Hypothesis \ref{hyp_2_1}, part $(B)$,} holds true and recall that the operator $A_n$ generates a group of linear bounded operators $(e^{\sigma A_n})_{\sigma\in\R}$. We want to show that the pair of processes 
\begin{equation}\label{pair}
(e^{-(\mathcal T-\tau )A_n}u_n^{\T}(\tau, \Xi_\tau^{t,x}), \nabla^G e^{-(\mathcal T-\tau )A_n} u^{\T}_n(\tau, \Xi_\tau^{t,x})),\, \tau \in [t,\mathcal T],
\end{equation}
satisfy the BSDE \eqref{abs_FBsystem_1-mild}. This is the content of the following proposition.
\begin{proposition}\label{prop_identific}
Assume Hypotheses \ref{hyp_1} and \ref{hyp_2_1} hold true. Let $u^{\T}_n$ be the unique solution to \eqref{d5}.  
 Then the pair of processes defined in \eqref{pair} is the unique solution of the BSDE \eqref{abs_FBsystem_1-mild}. As a consequence, if the pair of processes $(Y^{t,x,n}, Z^{t,x,n})$  is solution to the BSDE    \eqref{abs_FBsystem_1-mild}, then we have $Y^{t,x,n}_t=e^{-(\mathcal T-t )A_n}u^{\T}_n(t,x),\,Z^{t,x,n}_t= e^{-(\mathcal T-t )A_n} \nabla^Gu^{\T}_n(t,x)$, so that
\begin{align}
\label{abs_identification_formulae_tilde_2} 
  \nabla^Gu _n^{\T}   (\tau,\Xi_\tau^{t,x})=e^{(\mathcal T-\tau)A_n}Z_\tau^{t,x,n},  \quad \P\textup{-a.s.},   \text{for a.e.} \; \tau\in[t,\mathcal T].
\end{align}

Moreover, if a pair of processes $(Y^{t,x,n},Z^{t,x,n})$ is solution to \eqref{abs_FBsystem_1-mild}, then by setting
$$ 
 \widetilde v^n(t,x):=e^{(\mathcal T-t)A_n}Y^{t,x,n}_t,\, t\in [ 0,\T],
$$
we get that 
$$ 
  \nabla ^G\widetilde v^n(t,x):= e^{(\mathcal T-t)A_n}Z^{t,x,n}_t,\, t\in [ 0,\T],
$$
and $\widetilde v^n(t,x),\, t\in [ 0,\T],\, x \in H$, is solution to equation \eqref{d5}, i.e., $\widetilde v_n\equiv u_n^{\T}$. 
\end{proposition}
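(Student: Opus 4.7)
The plan is to verify that the pair displayed in \eqref{pair} satisfies the mild BSDE \eqref{abs_FBsystem_1-mild}, deduce uniqueness from Proposition \ref{Teo:ex-regdep-BSDE}, and then read off both the identification formula \eqref{abs_identification_formulae_tilde_2} and the ``moreover'' statement as immediate corollaries. The strategy avoids any direct use of It\^o's formula for $u_n^{\T}$ (whose regularity would be delicate) and relies instead on the Markov property plus the identification Lemma \ref{prop:abs_identification_formulae_tilde}.

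First I would rewrite the Kolmogorov-type equation \eqref{d5} in probabilistic form via $R_{s-t}[f](x) = \E[f(\Xi_s^{t,x})]$ and exploit the Markov property of the Ornstein--Uhlenbeck process: this produces
$$u_n^{\T}(\tau, \Xi_\tau^{t,x}) = \E\biggl[\int_\tau^{\T}\bigl(e^{(\T-s)A_n}G\widetilde C(s,\Xi_s^{t,x}) + \nabla^G u_n^{\T}(s,\Xi_s^{t,x})\,\widetilde C(s,\Xi_s^{t,x})\bigr)\,ds\,\Big|\,\calf_\tau\biggr],$$
so that the $H$-valued process
$$M_\tau := u_n^{\T}(\tau, \Xi_\tau^{t,x}) + \int_t^\tau \bigl(e^{(\T-s)A_n}G\widetilde C(s,\Xi_s^{t,x}) + \nabla^G u_n^{\T}(s,\Xi_s^{t,x})\,\widetilde C(s,\Xi_s^{t,x})\bigr)ds$$
is a square-integrable $(\calf_\tau)$-martingale on $[t,\T]$ (integrability is ensured by boundedness of $\widetilde C$, of $u_n^{\T}$ in $\calg^{0,1}$, and of $\nabla^G u_n^{\T}$ via Hypothesis \ref{hyp_2_1}(B)). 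By the martingale representation theorem for $H$-valued martingales driven by a cylindrical Wiener process, there exists $\bar Z \in L^2_\calp(\Omega\times[t,\T];L_2(U;H))$ with $M_\tau = M_t + \int_t^\tau \bar Z_s\,dW_s$. Evaluating at $\tau = \T$, using $u_n^{\T}(\T,\cdot) \equiv 0$ from \eqref{d5}, and rearranging yields a representation of the form \eqref{v identific con Z} for $v = u_n^{\T}$; Lemma \ref{prop:abs_identification_formulae_tilde} (whose hypotheses hold since $u_n^{\T} \in \calg^{0,1}$) then identifies $\bar Z_s = \nabla^G u_n^{\T}(s,\Xi_s^{t,x})$ for a.e.\ $s$, $\P$-a.s.

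The final step is to apply the bounded linear operator $e^{-(\T-\tau)A_n}$ to both sides of the resulting pathwise identity, commuting it with the Bochner and stochastic integrals. Here the group property in Hypothesis \ref{hyp_2_1}(A) is essential: it gives $e^{-(\T-\tau)A_n}e^{(\T-s)A_n} = e^{-(s-\tau)A_n}$ for the first drift, and the factorization $e^{-(\T-\tau)A_n} = e^{-(s-\tau)A_n}e^{-(\T-s)A_n}$ for the remaining terms. Setting $Y_\tau := e^{-(\T-\tau)A_n}u_n^{\T}(\tau, \Xi_\tau^{t,x})$ and $Z_\tau := e^{-(\T-\tau)A_n}\nabla^G u_n^{\T}(\tau,\Xi_\tau^{t,x})$, the resulting equation is precisely the mild BSDE \eqref{abs_FBsystem_1-mild}. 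Uniqueness in Proposition \ref{Teo:ex-regdep-BSDE} then forces any solution $(Y^{t,x,n},Z^{t,x,n})$ to coincide with this pair; \eqref{abs_identification_formulae_tilde_2} is recovered by multiplying the formula for $Z^{t,x,n}_\tau$ by $e^{(\T-\tau)A_n}$, and the ``moreover'' part follows at once because $Y_t^{t,x,n}$ is deterministic (Proposition \ref{Teo:ex-regdep-BSDE}), whence $\widetilde v^n(t,x) = e^{(\T-t)A_n}Y_t^{t,x,n} = u_n^{\T}(t,x)$, which in particular solves \eqref{d5}. The main obstacle I anticipate is justifying the martingale representation in the $H$-valued cylindrical setting and carefully commuting $e^{-(\T-\tau)A_n}$ past the stochastic integral; both are standard but need the group property together with the a priori regularity and $L^2$-bounds coming from Hypothesis \ref{hyp_2_1}(B).
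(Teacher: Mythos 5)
Your proposal is correct and follows essentially the same route as the paper's proof: both convert \eqref{d5} into a conditional-expectation identity via the Markov property of $\Xi$, invoke the martingale representation theorem to produce $\bar Z$, identify $\bar Z_s = \nabla^G u_n^{\T}(s,\Xi_s^{t,x})$ through Lemma \ref{prop:abs_identification_formulae_tilde}, apply $e^{-(\T-\tau)A_n}$ using the group property to match the mild BSDE \eqref{abs_FBsystem_1-mild}, and conclude by the uniqueness in Proposition \ref{Teo:ex-regdep-BSDE}. No substantive differences.
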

\begin{proof} The arguments are similar to those used in the proof of the uniqueness part of \cite[Theorem 6.2]{fute}, and are adequated to this different context. For reader's convenience, we simply write $u_n$ instead of $u_n^{\T}$.
\newline Starting from equation \eqref{d5}, we notice that for any $\tau\in[t,\T]$, 
$$
u_n(\tau, x)  = \E\int_\tau^{\mathcal T}\left[e^{(\mathcal T-s)A_n}G\widetilde C(s,\Xi_s^{\tau,x})\right]ds+\E \int_\tau^{\mathcal T}\left[\nabla^Gu_n(s,\Xi_s^{\tau,x})\widetilde C(s,\Xi_s^{\tau,x})\right]ds. 
$$
Hence, denoting the conditional expectation $\E[\cdot|\mathcal F_\tau]$ as $\E^{\mathcal F_\tau}[\cdot]$, the random variable $u_n(\tau,\Xi_\tau^{t,x})$ satisfies $\mathbb P$-a.s.
\begin{align}\label{d5-bis}
u_n(\tau,\Xi_\tau^{t,x}) := &      \Etau\int_\tau^{\mathcal T}\left[e^{(\mathcal T-s)A_n}G\widetilde C(s,\Xi_s^{\tau,\Xi_\tau^{t,x}})\right]ds+\Etau \int_\tau^{\mathcal T}\left[\nabla^Gu_n(\tau,\Xi_s^{\tau,\Xi_\tau^{t,x}})\widetilde C(s,\Xi_s^{\tau,\Xi_\tau^{t,x}})\right]ds   \nonumber 
\\
= & \Etau\int_\tau^{\mathcal T}\left[e^{(\mathcal T-s)A_n}G\widetilde C(s,\Xi_s^{t,x})\right]ds+\Etau \int_\tau^{\mathcal T}\left[\nabla^Gu_n(s,\Xi_s^{t,x})\widetilde C(s,\Xi_s^{t,x})\right]ds,
\end{align} 
since  $\Xi_s^{t,x} $ is $\calf_s$-measurable $\forall\, \tau\leq s\leq \mathcal T$, and  $\Xi_s^{\tau,\Xi_\tau^{t,x}}=\Xi_s^{t,x}$.
 Setting $$\xi:=\int_{{t}}^{\mathcal T}\left[e^{(\mathcal T-s)A_n}G\widetilde C(s,\Xi_s^{t,x})\right]ds+ \int_{{t}}^{\mathcal T}\left[\nabla^Gu_n(s,\Xi_s^{t,x})\widetilde C(s,\Xi_s^{t,x})\right]ds,$$ we can rewrite \eqref{d5-bis} as 
\begin{align*}
u_n(\tau,\Xi_\tau^{t,x}) &=\Etau\xi- \int_t^\tau\left[e^{(\mathcal T-s)A_n}G\widetilde C(s,\Xi_s^{t,x})\right]ds- \int_t^\tau\left[\nabla^Gu_n(s,\Xi_s^{t,x})\widetilde C(s,\Xi_s^{t,x})\right]ds,
\end{align*} 
for   
 $\tau\in[t,\T]$. By the representation theorem for martingales, see e.g. \cite{DPsecond}, there
exists  $(\bar Z^{t,x}_s)_{s \in [0. \T]}    \in  L^2_{\calp}(\Omega \times [0,\T]; L_2(U,H))$ such that for any $\tau\in[0,\T]$ 
\begin{align*}
u_n(\tau,\Xi_\tau^{t,x}) &=u_n(t,x)+\int_t^{\tau\wedge t} \bar Z_s^{t,x}\, dW_s\\&- \int_t^{\tau\wedge t}\left[e^{(\mathcal T-s)A_n}G\widetilde C(s,\Xi_s^{t,x})\right]ds- \int_t^{\tau\wedge t}\left[\nabla^Gu_n(s,\Xi_s^{t,x})\widetilde C(s,\Xi_s^{t,x})\right]ds, \quad \P\textup{-a.s.}
\end{align*}
We conclude that the process $u_n(\tau,\Xi_\tau^{t,x}),\,\, t\leq \tau \leq \mathcal T$ is a continuous semimartingale with canonical decomposition. By Lemma \ref{prop:abs_identification_formulae_tilde}, we have that $\bar Z_s^{t,x}=\nabla^G  u_n(s, \Xi_s^{t,x})$ a.e. $s\in [t,\mathcal T]$. Then the previous semimartingale reads as
\begin{align*}
u_n(\tau,\Xi_\tau^{t,x}) &=-\int_\tau^{\mathcal T} \nabla^G  u_n(s, \Xi_s^{t,x})\, dW_s\\&+\int_\tau^{\mathcal T}\left[e^{(\mathcal T-s)A_n}G\widetilde C(s,\Xi_s^{t,x})\right]ds+ \int_\tau^{\mathcal T}\left[\nabla^Gu_n(s,\Xi_s^{t,x})\widetilde C(s,\Xi_s^{t,x})\right]ds.
\end{align*}
If we apply $e^{-(\mathcal T-\tau)A_n}$ to both the sides of this equation we get
\begin{align*}
&e^{-(\mathcal T-\tau)A_n}u_n(\tau,\Xi_\tau^{t,x}) =-\int_\tau^{\mathcal T} e^{-(s-\tau)A_n}\left[e^{-(\T-s)A_n} \nabla^G  u_n(s, \Xi_s^{t,x})\,dW_s\right]
\\
&\;+ \int_\tau^{\mathcal T}  \left[e^{-(s-\tau)A_n}G\widetilde C(s,\Xi_s^{t,x})\right]ds+   \int_t^\tau e^{-(s-\tau)A_n}\left[e^{-(\mathcal T-s)A_n} \nabla^G   u_n(s,\Xi_s^{t,x})\widetilde C(s,\Xi_s^{t,x})\right]ds,  
\end{align*}
and by comparing this expression with \eqref{abs_FBsystem_1-mild}, it is immediate to see that the pair of processes 
\begin{align*}
(e^{-(\mathcal T-\tau )A_n}u_n(\tau, \Xi_\tau^{t,x}),  e^{-(\mathcal T-\tau )A_n} \nabla^G u_n(\tau, \Xi_\tau^{t,x})),\quad \tau \in [t,\mathcal T],
\end{align*}
solves equation  \eqref{abs_FBsystem_1-mild}. 

The ``Moreover" part follows from the fact that, by Proposition \ref{Teo:ex-regdep-BSDE}, equation \eqref{abs_FBsystem_1-mild} admits a unique solution.
\end{proof}

\section{Pathwise uniqueness for the nonlinear SDE}
\label{sec:path_uniq}
Let's go back to the nonlinear SPDE  \eqref{abs_1nonlinear_stochastic_eq}, which we rewrite with initial time $t=0$:
\begin{align*}
\left\{
\begin{array}{ll}
dX_\tau^{0,x}=AX_\tau^{0,x}d\tau+G\widetilde C(\tau,X_\tau^{0,x})d\tau +GdW_\tau & \tau\in[0,T], \\
X_0^{0,x}=x\in H.
\end{array}
\right.
\end{align*}
The existence of a weak solution to \eqref{abs_1nonlinear_stochastic_eq} has been already discussed in Proposition \ref{prop:esist-weak}. Let us set $X_\tau^x:=X_\tau^{0,x}$ for any $\tau\in[0,T]$ and   $x\in H$. Then, $\P{\textup{-a.s.}}$ we have
 \begin{align}
X_\tau^x= 
& e^{\tau A}x+\int_0^\tau \left(e^{(\tau-s)A}-e^{(\tau-s)A_n}\right)G\widetilde C(s,X_s^{x})ds+\int_0^\tau e^{(\tau-s)A_n}G\widetilde C(s,X_s^{x})ds \notag \\
& +\int_0^\tau e^{(\tau-s)A}GdW_s,\qquad \forall \tau\in[0,T].
\label{abs_1mild-formulation-n}
\end{align}
 Let  $\tau \in [0, T]$. In the next result, for any $n\in\N$ we consider  $u_n^{\tau}$,   the regular solution of \eqref{d5} with $\T=\tau$,  whose properties are listed in Hypothesis \ref{hyp_2_1}, part $(B)$.

\begin{proposition}\label{propo:abs_new-mildwaveeq} 
 Let Hypotheses \ref{hyp_1} and \ref{hyp_2_1} hold true. Then, for any $n\in\N$ and any $ \tau \in [0,T]$ we have
\begin{align}
X_\tau^x
=&  e^{\tau A}x+\int_0^\tau \left(e^{(\tau-s)A}-e^{(\tau-s)A_n}\right)G\widetilde C(s,X_s^{x})ds \notag \\
& +u_n^\tau(0,x)+\int_0^\tau \nabla^Gu_n^\tau(s,X_s^x)dW_s+\int_0^\tau e^{(\tau-s)A}GdW_s, \quad \P{\textup{-a.s.}}.
\label{abs_1mild-formulation-n-1bis} 
\end{align}
\end{proposition}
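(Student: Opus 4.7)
The identity \eqref{abs_1mild-formulation-n-1bis} differs from the mild formulation \eqref{abs_1mild-formulation-n} only in the term $\int_0^{\tau}e^{(\tau-s)A_n}G\widetilde C(s,X_s^x)ds$, so the whole proposition reduces to proving that, $\P$-a.s.,
\begin{equation}\label{star_plan}
\int_0^\tau e^{(\tau-s)A_n}G\widetilde C(s,X_s^x)\,ds \;=\; u_n^{\tau}(0,x) + \int_0^\tau \nabla^G u_n^{\tau}(s,X_s^x)\,dW_s.
\end{equation}
The plan is to derive \eqref{star_plan} by combining the identification formula of Proposition \ref{prop_identific} (which gives the analog of \eqref{star_plan} when $X^x$ is replaced by the Ornstein--Uhlenbeck process $\Xi^{0,x}$) with an infinite-dimensional Girsanov change of measure that transforms $X^x$ into an Ornstein--Uhlenbeck process.

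First I would specialise Proposition \ref{prop_identific} to $t=0$ and $\mathcal T=\tau$: evaluating the mild formula \eqref{abs_FBsystem_1-mild} at the initial time and using $Y_0^{0,x,n}=e^{-\tau A_n}u_n^{\tau}(0,x)$ and $e^{(\tau-s)A_n}Z_s^{0,x,n}=\nabla^G u_n^{\tau}(s,\Xi_s^{0,x})$, then multiplying through by $e^{\tau A_n}$ (allowed since $(e^{sA_n})_{s\in\R}$ is a group by Hypothesis \ref{hyp_2_1}(A)), I obtain
\[
u_n^{\tau}(0,x) = \int_0^{\tau} e^{(\tau-s)A_n}G\widetilde C(s,\Xi_s^{0,x})\,ds + \int_0^{\tau}\nabla^G u_n^{\tau}(s,\Xi_s^{0,x})\widetilde C(s,\Xi_s^{0,x})\,ds - \int_0^{\tau}\nabla^G u_n^{\tau}(s,\Xi_s^{0,x})\,dW_s.
\]
This is \eqref{star_plan} but written along $\Xi^{0,x}$, with an unwanted middle drift term.

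Next, on the probability space supporting the weak mild solution $X^x$, I would introduce the Doléans exponential $M_T=\mathcal{E}\bigl(-\int_0^\cdot \langle\widetilde C(s,X_s^x),dW_s\rangle_U\bigr)_T$, which is a true martingale because $\widetilde C$ is bounded (Novikov). Setting $d\widetilde\P/d\P=M_T$, the infinite-dimensional Girsanov theorem (see \cite[Section 10.3]{dpz92}) gives that $\widetilde W_t:=W_t+\int_0^t\widetilde C(s,X_s^x)\,ds$ is a cylindrical Wiener process under $\widetilde\P$ and, substituting in \eqref{mild}, that $dX^x_t=AX^x_t\,dt+G\,d\widetilde W_t$. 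Thus under $\widetilde\P$ the pair $(X^x,\widetilde W)$ is exactly an Ornstein--Uhlenbeck process with its driving noise, so the derivation of the previous paragraph applies verbatim (the function $u_n^{\tau}$ is deterministic and only the coupling between the OU process and the Wiener noise matters) to yield, $\widetilde\P$-a.s.,
\[
u_n^{\tau}(0,x) = \int_0^{\tau} e^{(\tau-s)A_n}G\widetilde C(s,X_s^x)\,ds + \int_0^{\tau}\nabla^G u_n^{\tau}(s,X_s^x)\widetilde C(s,X_s^x)\,ds - \int_0^{\tau}\nabla^G u_n^{\tau}(s,X_s^x)\,d\widetilde W_s.
\]

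Since $\widetilde\P\sim\P$, the equality also holds $\P$-a.s. Replacing $d\widetilde W_s$ by $dW_s+\widetilde C(s,X_s^x)\,ds$ in the last stochastic integral produces precisely the opposite of the middle drift term, the two cancel, and \eqref{star_plan} follows; inserting \eqref{star_plan} into \eqref{abs_1mild-formulation-n} gives \eqref{abs_1mild-formulation-n-1bis}. The only delicate point is the ``transfer'' of the identification formula under the Girsanov change of measure: this is legitimate because Proposition \ref{prop_identific} is proved via Lemma \ref{prop:abs_identification_formulae_tilde}, whose argument uses only the martingale representation theorem with respect to the driving Wiener process and the joint quadratic variation, both of which are invariant under the change of filtration-compatible equivalent measure we are performing; the Hilbert--Schmidt regularity in Hypothesis \ref{hyp_2_1}(B)(ii) ensures the stochastic integrals are well defined along $X^x$.
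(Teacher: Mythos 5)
Your proposal is correct and follows essentially the same route as the paper's proof: a Girsanov change of measure turning $X^x$ into an Ornstein--Uhlenbeck process driven by $\widehat W = W + \int_0^\cdot \widetilde C(s,X_s^x)\,ds$, application of the identification formulae of Proposition \ref{prop_identific} under the new measure, and then rewriting the BSDE in terms of the original $W$, which cancels the $Z\,\widetilde C$ drift term and yields \eqref{star_plan}. The one point you flag as delicate --- transferring the identification formula --- is handled in the paper by noting that $X^{t,x}$ is predictable with respect to the completed natural filtration of $\widehat W$ (by pathwise uniqueness for the OU equation), which is exactly the observation needed to apply the martingale representation theorem there.
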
 

\begin{proof}  
Let $0\leq t\leq \tau\leq T$ and $x \in H$.  We consider a   (weak) mild solution $X^{t,x}$ to  
\begin{align*}
dX^{t,x}_\sigma=AX_\sigma^{t,x} d \sigma+G\widetilde C(\sigma,X^{t,x}_\sigma)d\sigma +GdW_\sigma, \quad \sigma\in[t,\tau], \quad X^{t,x}_\sigma=x, \quad \sigma\in[0,t],
\end{align*}
which is given by 
\begin{align*}
  X_\sigma^{t,x}
=&  e^{(\sigma -t) A}x+\int_t^\sigma e^{(\sigma-s)A}G\widetilde C(s,X_s^{t,x})ds 
+ \int_t^\sigma    e^{(\sigma-s)A}GdW_s, \quad \P{\textup{-a.s.}}, \quad \sigma\in[t,\tau], \\
 X_\sigma^{t,x}   = & x, \quad \sigma\in[0,t].
\end{align*}
Such solution is defined on a stochastic basis     $(
\Omega,$ $ {\mathcal F},
 ({ \mathcal F}_{t}), \P) $,  
  on {which it is defined}  a 
 {cylindrical}  {${\mathcal F}_{t}$}-Wiener process  $W$ on $U$. 
  
 Let us set  
$$
\widehat W_{\sigma}=W_\sigma+\int_0^\sigma \widetilde C(s,X_s^{t,x})ds.
$$ 
By the Girsanov theorem  (see, for instance, \cite[Section 10.3]{DPsecond} or the Appendix in \cite{DPFPR13}) there exists a probability measure $\widehat\P = \widehat \P_{\tau}$ on $(\Omega,$ 
 ${ \mathcal F}_{\tau}),$  
 such that in $(\Omega, \calf_{\tau}, \widehat \P)$ the process $(\widehat W_{\sigma})_\sigma$ is a cylindrical Wiener process up to time $\tau$ (it is not difficult to prove  that $\widehat \P$ and $\P$ are equivalent). In $(\Omega, \calf_{\tau}, \widehat \P)$ the process $X^{t,x}$ 
 solves  
 $$ 
 dX^{t,x}_\sigma=A X^{t,x}_\sigma d\sigma+Gd\widehat W_\sigma, \;\; X^{t,x}_t=x,\;\;\; \sigma \in [t, \tau].   
 $$
 Since for OU stochastic equations pathwise uniqueness holds, we have, in particular, that  $(X^{t,x})$  is a  predictable process  with respect to the completed  natural filtration $(\calf_{s}^{\widehat W})_{0 \le s \le \tau}$ generated by $\widehat W$.    
 Let us consider the FBSDE system 
\begin{equation}
  \left\lbrace\begin{array}{ll}
 dX^{t,x}_\sigma  =AX^{t,x}
 
 _\sigma d\sigma+Gd \widehat W_\sigma
 , & \sigma\in\left[  t,\tau\right], \vspace{1mm} \\ 
 X_{\sigma}^{t,x}=x, & \sigma\in[0,t], \\ 
  -d Y_{\sigma}=-A_nY_\sigma d\sigma+G\widetilde C(\sigma,X^{t,x}_\sigma)\;d\sigma
  + Z_{\sigma} \, \widetilde C(\sigma,X^{t,x}_\sigma)  d\sigma
  -Z_{\sigma}\;d\widehat W_\sigma,
  &  \sigma\in  [0, \tau], \vspace{1mm} \\
Y_{\tau}=0.
 \end{array}
 \right.  \label{abs_FBSDE-wp2}
 \end{equation} 
From Proposition \ref{prop_identific}, system \eqref{abs_FBSDE-wp2} admits a unique solution 
$(X_\sigma^{t,x},\widehat Y_\sigma^{t,x,n},\widehat Z_\sigma^{t,x,n})$ in $(\Omega, \calf_{\tau},$ $(\calf_{s}^{\widehat W})_{0 \le s \le \tau},  $ $ \widehat \P)$. Recall that $\calf_{s}^{\widehat W } \subset \calf_{s},$ $s \in [0, \tau]$.  Moreover, see \eqref{pair},  we have 
 $\widehat \P$-a.s in $\Omega$ (or equivalently $\P$-a.s. in $\Omega$)
$$
 \widehat Y_\sigma^{t,x,n}=e^{-(\tau- \sigma)A_n}u_n^{\tau}(\sigma,X_\sigma^{t,x}), \;\; \sigma\in[0,\tau],  \;\;  \text{and} \;\; \widehat Z_\sigma^{t,x,n}=e^{-(\tau-\sigma)A_n}\nabla^Gu _n^{\tau}      (\sigma,X_\sigma^{t,x}),  
 $$ 
 for any $\sigma\in[0,\tau]$, a.e.. Hence, $\P$-a.s. for any $s\in[0,\tau]$, a.e.,  
 we have   
\begin{equation} \label{def-tildeYtildeZ_1}
e^{(\tau-s)A_n}\widehat Y_s^{t,x,n}= u_n^\tau (s,X_{s}^{t,x}), \quad {e^{(\tau-s)A_n}\widehat  Z_s^{t,x,n}=\nabla^G  u_n^\tau (s,X_{s}^{t,x}),}  
\end{equation} 
We recall that  
 \begin{align}  
 \widehat Y_{\sigma}^{t,x,n}
  = & \int_\sigma^{\tau}e^{-(s-\sigma)A_n }G \widetilde C(s,X^{t,x}_s)\,ds+\int_\sigma^\tau e^{-(s-\sigma)A_n}\widehat Z_s^{t,x,n}\widetilde C(s,X_s^{t,x})ds \notag \nonumber\\
 & -\int_\sigma^{\tau}e^{-(s-\sigma)A_n }\widehat Z^{t,x,n}_{s}\;d \widehat W_s \notag \nonumber\\
 = &  \int_\sigma^{\tau}e^{-(s-\sigma)A_n }G \widetilde C(s,X^{t,x}_s)\,ds-\int_\sigma^{\tau}e^{-(s-\sigma)A_n }\widehat Z^{t,x,n}_{s}\;d W_s, 
  \quad \P\textup{-a.s.}, \ \sigma\in[0,\tau]. \label{abs_bsde-markovian-wave-mild_3}
    \end{align}
Let us consider now the initial time $t=0$,  and let us write \eqref{abs_bsde-markovian-wave-mild_3} with $\sigma=0$ ( we write $X^x$ instead of $X^{0,x}$). We get
\begin{align*}  
\int_0^{\tau} e^{-sA_n}G\widetilde C(s,X_s^x)ds=  
 e^{-\tau A_n}u_n^{\tau}(0,x) 
+\int_0^{\tau}e^{-sA_n}\widehat Z_s^{0,x,n}dW_s, \quad \P{\textup{-a.s.}}, 
\end{align*}
for any $\tau\in[0,T]$, and by applying $e^{\tau A_n}$ to both  sides, from \eqref{def-tildeYtildeZ_1} we deduce that, $\P$-a.s.,
\begin{align*}
\int_0^{\tau}e^{(\tau-s)A_n}G\widetilde C(s,X_s^x)ds
&=  u_n^{\tau}(0,x)  
+\int_0^{\tau} e^{(\tau-s)A_n}\widehat Z_s^{0,x,n} dW_s\\
&= u^{\tau}_n(0,x)
+\int_0^{\tau} \nabla^Gu^{\tau}_n(s,X_s^x)dW_s, \quad \forall \tau\in[0,T].
\end{align*} 
By replacing in \eqref{abs_1mild-formulation-n} we infer that
\begin{align}
X_\tau^x
=&  e^{\tau A}x+\int_0^\tau \left(e^{(\tau-s)A}-e^{(\tau-s)A_n}\right)G\widetilde C(s,X_s^{x})ds \notag 
\\
& +u_n^\tau(0,x)+\int_0^\tau \nabla^Gu_n^\tau(s,X_s^x)dW_s+\int_0^\tau e^{(\tau-s)A}GdW_s,\quad  \P{\textup{-a.s.}},
\label{abs_1mild-formulation-n-1}
\end{align}
for any $\tau\in[0,T]$, and the proof is  finished.
\end{proof}

 The next result is our main uniqueness theorem.  

 \begin{theorem}\label{teo:abs_uni1}
Let Hypotheses  \ref{hyp_1} and \ref{hyp_2_1} hold true.  Then, there exists a positive constant $c=c(T)>0$ such that for any $x_1,x_2\in H$ we have 
\begin{equation} 
\label{abs_lip-dependence}
  \sup_{t \in [0,T]} \E  [   \vert  X_t^{x_1}-X_t^{x_2}\vert_H^2] \leq c \vert x_1-x_2\vert_H^2, 
\end{equation} 
where $X^{x_1}$ and $X^{x_2}$ denote    (weak) mild solutions to \eqref{abs_1nonlinear_stochastic_eq} starting at $x_1$ and $x_2$, respectively, and defined on the same stochastic basis. 
In particular, for equation (\ref{abs_1nonlinear_stochastic_eq}) pathwise uniqueness holds. 
\end{theorem}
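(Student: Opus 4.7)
\medskip
\noindent\textbf{Proof sketch proposal.}
The plan is to use the transformed mild formula \eqref{abs_1mild-formulation-n-1bis} of Proposition \ref{propo:abs_new-mildwaveeq} as a substitute for the original mild equation \eqref{abs_1mild-formulation-n}. The crucial gain is that the singular drift $G\widetilde C$, which is only $\beta$-Hölder in $x$, has been absorbed into the deterministic term $u_n^{\tau}(0,x)$ (Lipschitz in $x$ by \eqref{sg}) and the stochastic integral $\int_0^\tau \nabla^G u_n^{\tau}(s,X_s^x)\,dW_s$, whose integrand enjoys the uniform Hilbert--Schmidt Lipschitz bound \eqref{df}. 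The remaining pieces $e^{\tau A}x$ and $\int_0^\tau e^{(\tau-s)A}G\,dW_s$ depend on $x$ linearly and not at all, respectively, and the ``error term'' $\int_0^\tau(e^{(\tau-s)A}-e^{(\tau-s)A_n})G\widetilde C(s,X_s^x)\,ds$ will be shown to disappear as $n\to\infty$.

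Concretely, fix two weak mild solutions $X^{x_1},X^{x_2}$ on the same stochastic basis and set $\Delta_\tau^n(x_1,x_2):=X_\tau^{x_1}-X_\tau^{x_2}$. Subtracting the two representations \eqref{abs_1mild-formulation-n-1bis} (the noise terms $\int_0^\tau e^{(\tau-s)A}G\,dW_s$ cancel), I take $\E|\cdot|_H^2$ and estimate term by term. By \eqref{sg},
\[
\E|u_n^\tau(0,x_1)-u_n^\tau(0,x_2)|_H^2\leq C_T^2|x_1-x_2|_H^2.
\]
For the stochastic integral, the It\^o isometry together with \eqref{df} yields
\[
\E\Bigl|\int_0^\tau\bigl[\nabla^G u_n^\tau(s,X_s^{x_1})-\nabla^G u_n^\tau(s,X_s^{x_2})\bigr]dW_s\Bigr|_H^2
\leq \int_0^\tau h^\tau(s)\,\E|\Delta_s^n|_H^2\,ds.
\]
For the error term, the boundedness of $\widetilde C$, the uniform bound \eqref{d3}, and the pointwise convergence $e^{(\tau-s)A_n}y\to e^{(\tau-s)A}y$ give, by dominated convergence in $\Omega\times[0,\tau]$,
\[
\eta_n(\tau):=\E\Bigl|\int_0^\tau(e^{(\tau-s)A}-e^{(\tau-s)A_n})G\widetilde C(s,X_s^{x_1})\,ds\Bigr|_H^2\xrightarrow[n\to\infty]{}0,
\]
and similarly for $X^{x_2}$. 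Combining, one obtains
\[
\E|\Delta_\tau^n|_H^2\leq c_1|x_1-x_2|_H^2+c_2\eta_n(\tau)+\int_0^\tau h^\tau(s)\,\E|\Delta_s^n|_H^2\,ds.
\]

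Since $\|h^\tau\|_{L^1(0,\tau)}$ is uniformly bounded in $\tau\in(0,T]$, I then apply the generalized Gronwall inequality (Lemma \ref{lem:gron_mod}(ii), viewing the right-hand side as having the required convolution structure after a change of variables) to conclude
\[
\E|\Delta_\tau^n|_H^2\leq c_T\bigl(|x_1-x_2|_H^2+\eta_n(\tau)\bigr).
\]
Letting $n\to\infty$ yields \eqref{abs_lip-dependence}. Pathwise uniqueness follows by taking $x_1=x_2$. The main obstacles I expect are: first, justifying the dominated convergence for $\eta_n(\tau)$ uniformly in $\tau\in[0,T]$ (which rests on \eqref{d3} together with the integrability $\int_0^T\|e^{sA}G\|_{L_2(U;H)}^2\,ds<\infty$ implicit in \eqref{e5.17}); second, checking carefully that the kernel $h^\tau$ in \eqref{df} fits the hypothesis of Lemma \ref{lem:gron_mod}, which requires recasting the time integral in the correct convolution form so that the uniform $L^1$-bound in $\tau$ yields a Gronwall constant depending only on $T$.
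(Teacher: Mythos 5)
Your proposal follows essentially the same route as the paper's proof: subtract the two representations from Proposition \ref{propo:abs_new-mildwaveeq}, use \eqref{sg} for the deterministic term, the It\^o isometry with \eqref{df} for the stochastic integral, the generalized Gronwall lemma with the uniform bound $\|h^{\T}\|_{L^1(0,\T)}\le C$, and finally send $n\to\infty$ in the error terms. The one point you flag as an obstacle --- uniformity in $\tau$ of $\eta_n(\tau)\to 0$ --- is resolved in the paper by noting that for a.e.\ fixed $\omega$ the path $s\mapsto G\widetilde C(s,X_s^{x_1}(\omega))$ is continuous, hence has compact range $\mathcal K\subset H$, so the pointwise convergence $e^{rA_n}y\to e^{rA}y$ combined with the uniform operator bound in \eqref{d3} is automatically uniform over $y\in\mathcal K$ and $r\in[0,T]$; this gives $\sup_{\tau\le T}\bigl|\int_0^\tau(e^{(\tau-s)A}-e^{(\tau-s)A_n})G\widetilde C(s,X_s^{x_1})\,ds\bigr|_H\to 0$ pathwise, and then in $L^2(\Omega)$ by dominated convergence, exactly as your sketch requires.
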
  
\begin{proof}  
 Let $x_1,x_2\in H$ and let us denote by $X^1$ and $X^2$ the mild solutions to \eqref{abs_1nonlinear_stochastic_eq} starting at $x_1$ and $x_2$, respectively.
We fix $t\in]0,T]$. From \eqref{abs_1mild-formulation-n-1bis} with $\tau =t$, for any $n\in\N$ we have
\begin{align} 
\label{abs_x1-x2}  
\left(X_t^1-X_t^2\right)
= & e^{t A}(x_1-x_2)+\left(u_n^{t}(0,x_1)-u_n^{t}(0,x_2)\right) 
+\delta_n^1(t) \, +\, \delta_n^2(t
)  \nonumber \\
& +\int_0^t\left(\nabla^Gu_n^t(s,X_s^1)-\nabla^Gu_n^t (s,X_s^2)\right)dW_s, \quad \P{\textup{-a.s.}}, 
\end{align}
where
\begin{align*}
\delta_n^i(t)=\int_0^t \left(e^{(t-s)A}-e^{(t-s)A_n}\right)G\widetilde C(s,X^i_s)ds, \quad  \P{\textup{-a.s.}}, \  i=1,2, \ n\in\N.
\end{align*}
Notice that in \eqref{abs_x1-x2}   we consider the function $u_n^{t}$   such that $u_n^{t}(t, \cdot)\equiv 0$. 

The crucial point is that estimates on  $u_n^t$ are uniform in $t$ (cf. Hypothesis \ref{hyp_2_1}), part $(B)$.
We also note that $x\mapsto e^{t A}x$ is Lipschitz continuous with respect to $x$, uniformly with respect to $t$, and taking into account Hypothesis \ref{hyp_2_1}, part $(B)$, point $(i)$, we know that $x\mapsto u_n^{t}(0,x) $ is Lipschitz continuous. For what concerns the stochastic integral, note that 
using Hypothesis
\ref{hyp_2_1}, part $(B)$, point $(ii)$ {with $\T=t$,}   by the It\^o isometry 
 we find 
 \begin{align} \label{doob}
  \E \left[  \left|\int_0^t\left(\nabla^Gu_n^t(s,X_s^1)-\nabla^G u_n^t (s,X_s^2)\right)dW_s\right|_H^2 \right] 
   \le  \int_0^{t} {h (t-s)} \, \E \left[  |X_s^1-X_s^2|_H^2 \right] ds. 
\end{align}
Using \eqref{doob} in \eqref{abs_x1-x2}, for any $n\in\N$ 
we get
\begin{gather*} 
\mathbb E \left[ \left|X_t^1-X_t^2\right|_H^2\right]
\\
\leq C_{T}\left(|x_1-x_2|_H^2
+ 
\int_0^t  {h (t-s)} \E \left[  |X_r^1-X_r^2|_H^2 \right] ds 
 +\mathbb E\left[ \sup_{t \le T} |\delta_n^1(t)|_H^2\right] 
 +\mathbb E\left[\sup_{t \le T} |\delta_n^2(t)|_H^2\right] \right),
\end{gather*}
where $C_{T}$ is a positive constant independent  of $n$ and $t$. 
Note that
$$
t \mapsto \E \left[ |X_t^1-X_t^2|_H^2\right],
$$
is a bounded function on $[0, T]$. Thus applying the generalized Gronwall lemma we infer
\begin{equation*}
\mathbb E \left[ |X_t^1-X_t^2|_H^2\right] 
\le 
K_T
(\mathbb E\left[ \sup_{t \le T} |\delta_n^1(t)|_H^2\right] 
 +\mathbb E\left[\sup_{t \le T} |\delta_n^2(t)|_H^2\right] + 
  |x_1-x_2|_H^2  ) ,
\end{equation*}
where $K_T$ is a positive constant independent of $n$ and { $t$.} We need to prove that 
\begin{align} \label{asd}
\mathbb E\left[ \sup_{t \le T} |\delta_n^1(t)|_H^2\right] 
 +\mathbb E\left[\sup_{t \le T} |\delta_n^2(t)|_H^2\right] \to 0, \quad n \to \infty.
\end{align} 
Using the dominated convergence theorem,  we get the assertion if we show that, $\P$-a.s.,   
\begin{equation}\label{efg}
\lim_{n \to \infty} \sup_{t \le T} |\delta_n^1(t)|_H^2 =0,\;\;\; 
 \lim_{n \to \infty}
\sup_{t \le T} |\delta_n^2(t)|_H^2 =0.
\end{equation}
 Let us  consider the first limit in \eqref{efg}
 (the proof of the second limit is similar). 
 
Let us fix $\omega$, $\P{\textup{-a.s.}}$; for any  $n \in \N$, we have   
\begin{gather*}
 \delta_n^1(t) = \int_0^t \left(e^{(t-s)A}-e^{(t-s)A_n}\right)   g(s)ds= \int_0^t \left(e^{rA}-e^{r A_n}\right)   g(t-r)dr,
 \  \ n\in\N. 
\end{gather*}
with $g(s) = G\widetilde C(s,X^1_s(\omega))$, $s \in [0,T]$, which is continuous from $[0,T]$ with values in $H$. 
It is enough to prove that   $ \sup_{t \le T} |\delta_n^1(t)|_H \to 0$ as $n \to \infty$.
We note that, for any compact set $K \subset H$, $r \in [0,T]$, there exists ${ y=y_{K,r} }$ such that
\begin{align*}
\sup_{y\in K}\left|\left(e^{rA}-e^{rA_n}\right)y\right|=\left|\left(e^{rA}-e^{rA_n}\right)y_{K, r}\right|,
\end{align*}
and so 
\begin{equation}\label{comp1}
\lim_{n \to \infty} \sup_{y \in K}\left| \left(e^{rA}-e^{rA_n}\right)  y\right|_H =0.
\end{equation} 
 Let us  introduce the compact sets $K_t= \{g(s) \}_{s \in [0,t]}$ $\subset H$, $t \in [0,T]$.  From \eqref{comp1} it follows that
 \begin{gather*}
| \delta_n^1(t)|_H \le \int_0^t \sup_{y \in K_t}\left| \left(e^{rA}-e^{rA_n}\right)   y\right|_H dr
\le  \int_0^T \sup_{y \in K_T}\left| \left(e^{rA}-e^{rA_n}\right)   y\right|_H dr \to 0,\quad  n \to \infty.   
\end{gather*}
 This shows  assertion \eqref{asd} and completes the proof.
\end{proof}

 \begin{remark}\label{gen} We point out that, using a localization argument as in  \cite{DPFPR15} the boundeness of $\widetilde C$ can be dispensed.  In particular, one can prove strong well-posedness of \eqref{nonlinear_stochastic_eq_intro}, for any $x \in H$, under 
  Hypotheses  \ref{hyp_1} and \ref{hyp_2_1} 
 but replacing the condition on $\widetilde C$ with the  weaker assumption:  $\widetilde C: [0,T] \times H \to U$  is continuous on $[0,T] \times H $,  there exists $K_T$ such that 
\begin{equation}\label{qre}
 |\widetilde C(t,x)|_U \le K_T (1 + |x|_H),\;\;\; t \in [0,T], \; x \in H,
\end{equation} 
 and moreover, for any ball $B = B(z,r) $, $z \in H$, $r>0$, the function $\widetilde C : [0,T] \times B \to U$ is $\beta$-H\"older continuous, uniformly in $t \in [0,T]$ (the index $\beta \in (0,1)$ should be the same for any ball $B$ but the H\"older norm may depend on the ball we consider). 
\end{remark}

\begin{remark} \label{strong} By Theorem  \ref{teo:abs_uni1}, using   a generalization of  the Yamada-Watanabe theorem 
 (see \cite{Ondre04} and \cite{LR15}), one deduces that equation \eqref{nonlinear_stochastic_eq_intro}
has a unique {  strong mild solution,}   {\sl for  any $x \in H$.} 
\end{remark}

\section{Analytic results on the associated Kolmogorov equation \eqref{d5} }
\label{sec:OU_smgr_reg}

In this section, assuming Hypothesis  \ref{hyp_1} we give {\it  sufficient }conditions on {\ $A,G$ and $\widetilde C$} such that Hypothesis \ref{hyp_2_1} is satisfied. This will imply the pathwise uniqueness result for equation \eqref{abs_1nonlinear_stochastic_eq} according to Theorem \ref{teo:abs_uni1}.

{ We split this section into two parts: in the former we provide preliminaries results on  the equation \eqref{d5_A_0} which involves a generator $A_0$ of a strongly continuous semigroup $e^{tA_0}$ on $H$;
in the second part we will consider Hypothesis \ref{hyp_2_1}.
}

\subsection{Preliminary results on equation \eqref{d5_A_0}
} 
\label{subsec:abs_OUsemigroup_0}

 Let us assume the following condition on the operator $Q_t$, $t>0$, introduced in \eqref{abstr_conv_stocastica}.
\begin{hypothesis}[Controllability]
\label{hyp_22} 
\begin{enumerate}[(i)]
\item For any $t>0$ we have ${\rm Im}(e^{tA})\subseteq {\rm Im}(Q_t^{1/2})$. 
\item For any $t>0$ we set $\Gamma(t):=Q^{-1/2}_te^{tA}:H\rightarrow H$. From $(i)$ it follows that $\Gamma(t)$ is a bounded linear operator for any $t>0$. We assume that there exist a positive  constant $C=C_T$ and measurable functions $\Lambda_1,\,\Lambda_2:(0,T]\rightarrow \R_+$ such that { $\inf_{t\in(0,T]}\Lambda_1(t), \inf_{t\in(0,T]}\Lambda_2(t)>0$;}  further,  for any $ \varepsilon\in(0,T)$, $\Lambda_1,\,\Lambda_2$ are bounded in the interval $[\varepsilon,T]$, and
\begin{align}
\label{abs_Gamma_OU}
|\Gamma(t)z|_H & \leq C_T\Lambda_1(t)|z|_H,\ \  |\Gamma(t)Gk|_{H}   \leq C_T\Lambda_2(t)|k|_U,  \ \ 
 z\in H, \ k\in U,  \ t\in(0,T].
\end{align}
\end{enumerate}
Note that if $U=H$  and $G =I$  we consider $\Lambda_1 = \Lambda_2$. 
\end{hypothesis}
\begin{remark}
\label{rmk_Lamba_1-Lambda_2}  
Since $G\in L(U;H)$ it follows that the best choice of $\Lambda_1$ and of $\Lambda_2$ in \eqref{abs_Gamma_OU} and a suitable choice of $C_T$ give $\Lambda_2(t)\leq \Lambda_1(t)$ for any $t\in(0,T]$.
\end{remark}
It is well known that for any $t>0$ condition ${\rm Im}(e^{tA})\subseteq {\rm Im}(Q_t^{1/2})$ is related to the null-controllability of the abstract controlled equation
\begin{align*}
\left\{
\begin{array}{ll}
\dot{Y}(t)=AY(t)+Gu(t), & t\in[0,T], \\
Y(0)=y\in H.
\end{array}
\right.
\end{align*}

In the sequel we will also need the following assumption.

\begin{hypothesis}
\label{hyp_3}
\noindent  The function  { $\Lambda_1^{1-\beta}\Lambda_2\in L^1(0,T)$, where $\beta\in(0,1)$ is the constant in Hypothesis \ref{hyp_1}$(iv)$ (cf. \eqref{abs_Gamma_OU}).  }   
\end{hypothesis}
By the dominated convergence  theorem the previous assumption implies that  
\begin{equation}\label{int_L1L2beta}
 C_{\gamma, T}:=\int_0^Te^{-\gamma t} \left(\Lambda_1(t)\right)^{1-\beta}\Lambda_2(t)\,dt \rightarrow0, \quad \gamma\rightarrow+\infty.
\end{equation}

\begin{remark}
\label{rmk:integrability} The function $\Lambda_1(t)=t^{-\sigma_1},\,\Lambda_2(t)=t^{-\sigma_2},$ with $\sigma_1>0$ and $\sigma_2\in(0,1)$ satisfy hypotheses \ref{hyp_22} and \ref{hyp_3} if $\beta>\max\{0,(\sigma_1)^{-1}(\sigma_1+\sigma_2-1)\}$. Indeed since $\sigma_2\in(0,1)$ it follows that $\sigma_1+\sigma_2-1<\sigma_1$, which implies that $\frac{\sigma_1+\sigma_2-1}{\sigma_1}<1$. Further, with the condition $\beta>\max\{0,(\sigma_1)^{-1}(\sigma_1+\sigma_2-1)\}$ the product $\Lambda_1^{1-\beta}(t)\Lambda_2(t)=t^{-((1-\beta)\sigma_1+\sigma_2)}$ is integrable as required in \eqref{int_L1L2beta}, since $(1-\beta)\sigma_1+\sigma_2<1$. 
\end{remark} 

{We recall that $(R_t)$ is the Ornstein-Uhlenbeck semigroup defined by $R_t[\Phi](x):=\mathbb E[\Phi(\Xi_t^{0,x})]$, for any $\Phi\in B_b(H;H)$, any $t\geq0$ and any $x\in H$.} Under Hypothesis \ref{hyp_22}, from \cite[Theorem 9.26]{DPsecond} we infer that for any $\Phi\in B_b(H;H)$ the map $x\mapsto R_t[\Phi](x)\in C^\infty_b(H;H)$ and
\begin{align}
\label{abs_gateaux_derivative}
\nabla_kR_t[\Phi](x)
= & \int_H\langle \Gamma(t)k,Q_t^{-1/2}y\rangle_H\Phi(e^{tA}x+y)\mathcal N(0,Q_t)(dy), 
\end{align}
for any $x,k\in H$ and any $t>0$, where $\mathcal N(0,Q_t)$ is the Gaussian measure on $H$ with mean $0$ and covariance operator $Q_t$ (see for instance \cite[Chapter 1]{DP3}). Estimates \eqref{abs_Gamma_OU} allow us to repeat verbatim the statements and the proofs of \cite[Lemmata 4.1-4.3]{MasPri17}, and we collect these results in a unique Lemma. 
\begin{lemma}
\label{abs_lemma_collezione}
Let Hypotheses \ref{hyp_1} and \ref{hyp_22} hold true, and let $R_t$ be the Ornstein-Uhlenbeck operator defined in Section \ref{sub:abstract_equation}, acting on vector-valued functions. 
\begin{itemize}
\item[(i)] 
For any $\Phi\in B_b(H;H)$ and any $t>0$, the function $x\mapsto R_t[\Phi](x)$ is G\^ateaux differentiable and its G\^ateaux derivative $\nabla R_t[\Phi](x)\in L(H;H)$ is given by \eqref{abs_gateaux_derivative}. Moreover, for any $T>0$ there exists a positive constant $C=C_T$ such that 
\begin{align}
\label{abs_stima_OU_grad_completa}
\sup_{x\in H}|\nabla_z R_t[\Phi](x) |_H& \leq
C\|\Phi\|_\infty\Lambda_1(t)|z|_H, \quad z\in H\\   
\label{abs_stima_OU_grad_G}
\sup_{x\in H}|\nabla^G_k R_t[\Phi](x) |_H& \leq
C\|\Phi\|_\infty\Lambda_2(t)|k|_U, \quad k\in U.
\end{align}
If $\Phi\in C_b(H;H)$ then $R_t[\Phi]$ is Fr\'echet differentiable on $H$ and we have $\nabla R_t[\Phi]\in C_b(H;L(H;H))$ and $\nabla^GR_t[\Phi]\in C_b(H;L(U;H))$.
\item[(iii)]
For any $\Phi\in C_b(H;H)$, any $t>0$ and any $k\in U$ the function $x\mapsto \nabla^G_k R_t[\Phi](x)$ is Fr\'echet differentiable on $H$ and
\begin{align}
\nabla_y& \nabla^G_kR_t[\Phi](x) \notag \\
\label{abs_second_order_derivatives}
= & \int_H\left(\langle \Gamma(t)y,Q^{-1/2}z\rangle_H\langle \Gamma(t)Gk,Q^{-1/2}z\rangle_H-\langle \Gamma(t)y,\Gamma(t)Gk\rangle_H\right)\Phi(e^{tA}x+z)\mathscr N(0,Q_t)(dz),
\end{align}
\begin{align}
\label{abs_stima_der_seconda_direz}
 \sup_{x\in H} |\nabla_y\nabla^G_{k}R_t[\Phi](x)|_{H}\leq C\|\Phi\|_\infty\Lambda_1(t)\Lambda_2(t)|y|_H|k|_U, \quad t\in(0,T],
\end{align}
and 
\begin{align}
\lim_{x\rightarrow 0}& \sup_{y\in H}\|\nabla_\cdot\nabla^G_kR_t[\Phi](x+y)-\nabla_\cdot\nabla_k^GR_t[\Phi](y)\|_{L(H;H)} \notag \\
= & \lim_{x\rightarrow 0}\sup_{y\in H}\sup_{|z|_H=1}|\nabla_z\nabla^G_kR_t[\Phi](x+y)-\nabla_z\nabla_k^GR_t[\Phi](y)|_{H}=0, \quad k\in U.
\label{abs_convergenza_derivate_seconde}
\end{align}
\end{itemize}
\end{lemma}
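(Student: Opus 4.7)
Since the claim is explicitly presented as the vector-valued analogue of \cite[Lemmata 4.1--4.3]{MasPri17}, the strategy is to argue componentwise: for each $j$ set $\Phi_j(x) := \langle \Phi(x), e_j\rangle_H$, observe $R_t[\Phi](x) = \sum_j R_t[\Phi_j](x) e_j$ in the $L^2(H,\mathcal{N}(0,Q_t))$ sense, and invoke the scalar Bismut--Elworthy formula from \cite[Theorem 9.26]{dpz92}, whose applicability is guaranteed by Hypothesis \ref{hyp_22}(i). This immediately yields the identity \eqref{abs_gateaux_derivative} for directional derivatives of each component; reassembling gives the vector-valued formula and, since the scalar series converges in $H$ by Cauchy--Schwarz against $\|\Phi\|_\infty$, G\^ateaux differentiability of $R_t[\Phi]$. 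The gradient bounds \eqref{abs_stima_OU_grad_completa} and \eqref{abs_stima_OU_grad_G} follow at once: the Gaussian random variable $\langle \Gamma(t)z, Q_t^{-1/2}Y\rangle$, with $Y\sim\mathcal N(0,Q_t)$, has variance $|\Gamma(t)z|_H^2$, so Cauchy--Schwarz gives $|\nabla_z R_t[\Phi](x)|_H \le \|\Phi\|_\infty\,|\Gamma(t)z|_H$, and one concludes via Hypothesis \ref{hyp_22}(ii), i.e., $|\Gamma(t)z|_H\le C_T\Lambda_1(t)|z|_H$ and $|\Gamma(t)Gk|_H\le C_T\Lambda_2(t)|k|_U$.

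For the Fr\'echet statement when $\Phi\in C_b(H;H)$, I would fix $x$ and apply the dominated convergence theorem to the integral \eqref{abs_gateaux_derivative} written with variable $x+x_n\to x$: the integrand is pointwise continuous in $x$ because $\Phi$ is, and dominated by $\|\Phi\|_\infty \cdot |\langle \Gamma(t)z, Q_t^{-1/2}y\rangle_H|$, which is $\mathcal N(0,Q_t)$-integrable. Uniformity in $|z|_H\le 1$ (resp.\ $|k|_U\le 1$) gives continuity of the map $x\mapsto \nabla R_t[\Phi](x)\in L(H;H)$ (resp.\ $\nabla^G R_t[\Phi](x)\in L(U;H)$), hence G\^ateaux plus continuity upgrades to Fr\'echet.

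For part (iii) the plan is to differentiate formula \eqref{abs_gateaux_derivative} (applied to direction $Gk$) once more in the direction $y$, again componentwise. Writing the incremental ratio along $x+sy$, one splits the derivative into the contribution coming from the Cameron--Martin density weight and the contribution from differentiating $\Phi(e^{tA}x+\cdot)$; a standard integration by parts against the Gaussian measure $\mathcal N(0,Q_t)$ produces the two terms on the right of \eqref{abs_second_order_derivatives}, namely the product $\langle\Gamma(t)y,Q_t^{-1/2}z\rangle\langle\Gamma(t)Gk,Q_t^{-1/2}z\rangle$ minus the ``Malliavin commutator'' $\langle\Gamma(t)y,\Gamma(t)Gk\rangle_H$. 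The bound \eqref{abs_stima_der_seconda_direz} is then obtained by Cauchy--Schwarz on the two Gaussian factors (each of which yields $|\Gamma(t)y|_H$ or $|\Gamma(t)Gk|_H$), combined with Hypothesis \ref{hyp_22}(ii); the second term in \eqref{abs_second_order_derivatives} is controlled directly by $|\Gamma(t)y|_H\,|\Gamma(t)Gk|_H\,\|\Phi\|_\infty$.

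Finally, the continuity/vanishing property \eqref{abs_convergenza_derivate_seconde} is obtained by applying the dominated convergence theorem to the representation \eqref{abs_second_order_derivatives}: for $\Phi\in C_b(H;H)$ the integrand is pointwise continuous in the first variable and uniformly dominated by an $\mathcal N(0,Q_t)$-integrable majorant (a product of two Gaussian linear functionals times $\|\Phi\|_\infty$), and the dependence on the auxiliary point $y\in H$ is harmless because $\Phi(e^{tA}(x+y)+z)$ stays inside the same bound. The main subtlety I expect is verifying that the interchange of differentiation and integration is legitimate along $G$-directions when $\Phi$ is only bounded Borel (not continuous): this is precisely what the Cameron--Martin condition $\mathrm{Im}(e^{tA})\subset \mathrm{Im}(Q_t^{1/2})$ from Hypothesis \ref{hyp_22}(i) enables, together with the quantitative bound $|\Gamma(t)Gk|_H\le C_T\Lambda_2(t)|k|_U$ which keeps the integration by parts weight in $L^2(\mathcal N(0,Q_t))$ uniformly in $x$.
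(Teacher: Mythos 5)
Your overall route -- componentwise reduction to the scalar Ornstein--Uhlenbeck semigroup, the Cameron--Martin/Bismut--Elworthy representation \eqref{abs_gateaux_derivative} from \cite[Theorem 9.26]{dpz92}, Cauchy--Schwarz against the Gaussian weight whose variance is $|\Gamma(t)z|_H^2$, and a second differentiation plus Gaussian integration by parts for \eqref{abs_second_order_derivatives} -- is exactly the argument the paper delegates to \cite[Lemmata 4.1--4.3]{MasPri17}, and those parts of your proposal are correct, including the upgrade from G\^ateaux to Fr\'echet differentiability via continuity of the derivative (there the Cauchy--Schwarz step cleanly separates the direction $z$ from the $\Phi$-increment, so uniformity in $|z|_H\le 1$ is automatic and only pointwise convergence in $x$ is needed).

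There is, however, a genuine gap in your treatment of \eqref{abs_convergenza_derivate_seconde}. That statement contains a $\sup_{y\in H}$, i.e.\ it asserts \emph{uniform} continuity of $x\mapsto \nabla_\cdot\nabla^G_k R_t[\Phi](x)$ in the $L(H;H)$ norm. After Cauchy--Schwarz you are left with controlling
\begin{equation*}
\sup_{y\in H}\Big(\int_H \big|\Phi\big(e^{tA}x+e^{tA}y+w\big)-\Phi\big(e^{tA}y+w\big)\big|_H^2\,\mathcal N(0,Q_t)(dw)\Big)^{1/2},
\end{equation*}
and dominated convergence only sends this to $0$ for each \emph{fixed} $y$: a function in $C_b(H;H)$ on an infinite-dimensional $H$ need not be uniformly continuous, the translated Gaussian measures $\mathcal N(e^{tA}y,Q_t)$ are not uniformly tight in $y$, and ``staying inside the same bound'' gives domination but not uniform convergence. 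The standard repair (and the one used in \cite{MasPri17}) is the semigroup splitting $R_t=R_{t/2}\circ R_{t/2}$: apply the second-order kernel at time $t/2$ to $\Psi:=R_{t/2}[\Phi]$, which by \eqref{abs_stima_OU_grad_completa} is Lipschitz with constant $C\Lambda_1(t/2)\|\Phi\|_\infty$, so that $|\Psi(e^{(t/2)A}(x+y)+w)-\Psi(e^{(t/2)A}y+w)|_H\le C\Lambda_1(t/2)\|\Phi\|_\infty\|e^{(t/2)A}\|_{L(H)}|x|_H$ uniformly in $y$ and $w$; the uniform limit then follows at once. With that modification your proof is complete.
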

  
The last result we need follows from interpolation theory. Let $\beta\in(0,1)$. From \cite[Theorem 2.3.3 \& example 2.3.4]{DP3} it follows that
\begin{align}
\label{abs_interpolation_result}
(C_b(H),C_b^1(H))_{\beta,\infty}=C_b^\beta(H), \quad \beta\in(0,1),
\end{align}
with equivalence of the norms {(here, $(X,Y)_{\beta,\infty}$ denotes the real interpolation space between the Banach spaces $X$ and $Y$, for more details see \cite{Lun})}. Further, we denote by $(\calr_t)$ the transition semigroup of the Ornstein Uhlenbeck process $\Xi^{0,x}$ acting on Borel measurable real valued functions $\phi: H\rightarrow \R$ as
$$
\calr_t [\phi](x)=\mathbb{E}[\phi(\Xi^{0,x}_t)].
$$
There is a link between the $H$-valued Ornstein-Uhlenbeck transition semigroup $(R_t)_{t\geq0}$ and the scalar Ornstein-Uhlenbeck transition semigroup $(\calr_t)_{t\geq0}$: for any $\Phi\in B_b(H;H)$ and $h\in H$ we set $\Phi_h(x):=\langle \Phi(x),h\rangle_H$ for any $x\in H$. From \cite[Section 3]{dapr-fl} it follows that 
\begin{align}
\label{abs_smgr_sc_smgr_vet}
\langle \nabla_y R_t[\Phi](x), h \rangle_H=\nabla_y\calr_t[\Phi_h](x), \quad t>0, \ x,y,h\in H.
\end{align}

With computations similar to the ones in the proof of \cite[Lemma 4.4]{MasPri17} 
we can prove the following result. For reader's convenience we provide a detailed proof in Appendix \ref{appendix B}. 

\begin{lemma}
\label{abs_lem_interpolazione}
Let Hypotheses \ref{hyp_1} and \ref{hyp_22} be satisfied. Then, for any $T>0$ there exists a positive constant $C=C_T$ such that for any $\beta\in(0,1)$ any $\Phi\in C_b^\beta(H;H)$ we have
\begin{align}
\label{abs_stima_holder_1}
\sup_{x\in H}|\nabla_y R_t[\Phi](x)|_H\leq 
& C\|\Phi\|_{C^\beta(H;H)} \Lambda_1^{1-\beta}(t)|y|_H, \quad y\in H, \\
\label{abs_stima_holder_3}
\sup_{x\in H}|\nabla_y\nabla^G_k R_t[\Phi](x)|_{H}
\leq &  {C\|\Phi\|_{C^\beta(H;H)}}{\Lambda_1(t)^{1-\beta}\Lambda_2(t)}|y|_H|k|_U, \quad y\in H, \ k\in U,
\end{align}
for any $ t\in(0,T]$.
\end{lemma}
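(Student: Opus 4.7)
The strategy is classical real interpolation applied to the linear operator $\phi \mapsto \nabla_y\calr_t[\phi](x)$ (and its second-order analogue), combined with the reduction from vector-valued to scalar-valued functions afforded by \eqref{abs_smgr_sc_smgr_vet}. I would first reduce to the scalar setting: for any $h \in H$ with $|h|_H = 1$, set $\Phi_h(x) := \langle \Phi(x), h\rangle_H$ and note that $\|\Phi_h\|_{C^\beta(H;\R)} \leq \|\Phi\|_{C^\beta(H;H)}$. By \eqref{abs_smgr_sc_smgr_vet},
\[
\langle \nabla_y R_t[\Phi](x), h\rangle_H = \nabla_y\calr_t[\Phi_h](x),
\qquad
\langle \nabla_y\nabla^G_k R_t[\Phi](x), h\rangle_H = \nabla_y\nabla^G_k \calr_t[\Phi_h](x),
\]
so it suffices to prove the scalar versions of \eqref{abs_stima_holder_1}--\eqref{abs_stima_holder_3} and then take the supremum over $|h|_H = 1$.

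For fixed $t > 0$, $x, y \in H$, $k \in U$, consider the linear functionals $T^1_{t,x,y}: \phi \mapsto \nabla_y\calr_t[\phi](x)$ and $T^2_{t,x,y,k}: \phi \mapsto \nabla_y\nabla^G_k\calr_t[\phi](x)$, defined on real-valued test functions. The plan is to establish two pairs of endpoint bounds. For the \emph{$C_b$ endpoint}, the scalar versions of \eqref{abs_stima_OU_grad_completa} and \eqref{abs_stima_der_seconda_direz} (which follow from Lemma \ref{abs_lemma_collezione} applied to $H = \R$, or equivalently from the Cameron--Martin formulas \eqref{abs_gateaux_derivative} and \eqref{abs_second_order_derivatives}) give
\[
|T^1_{t,x,y}\phi| \leq C \|\phi\|_\infty \Lambda_1(t)|y|_H,
\qquad
|T^2_{t,x,y,k}\phi| \leq C\|\phi\|_\infty \Lambda_1(t)\Lambda_2(t)|y|_H|k|_U.
\]
For the \emph{$C^1_b$ endpoint}, I would differentiate directly under the Gaussian integral in \eqref{abs_gateaux_derivative}/\eqref{abs_second_order_derivatives}: if $\phi \in C^1_b(H;\R)$, writing $\calr_t[\phi](x) = \int_H \phi(e^{tA}x + z)\,\mathcal N(0,Q_t)(dz)$ yields $\nabla_y\calr_t[\phi](x) = \int_H \langle \nabla\phi(e^{tA}x+z), e^{tA}y\rangle_H \, \mathcal N(0,Q_t)(dz)$, hence
\[
|T^1_{t,x,y}\phi| \leq \|\nabla\phi\|_\infty \|e^{tA}\|_{L(H)} |y|_H \leq C \|\phi\|_{C^1(H)}|y|_H.
\]
For the mixed second derivative, I differentiate \eqref{abs_gateaux_derivative} under the integral sign (putting the $\nabla^G_k$ on the Gaussian density and the $\nabla_y$ on $\phi$) and apply Cauchy--Schwarz in $L^2(H,\mathcal N(0,Q_t))$, which produces the factor $|\Gamma(t) Gk|_H \leq C\Lambda_2(t)|k|_U$ and the factor $\|\nabla\phi\|_\infty\|e^{tA}y\|_H \leq C\|\phi\|_{C^1}|y|_H$, giving
\[
|T^2_{t,x,y,k}\phi| \leq C \|\phi\|_{C^1(H)}\Lambda_2(t)|y|_H|k|_U.
\]

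With these two endpoint estimates in hand, I invoke real interpolation: by \eqref{abs_interpolation_result} we have $(C_b(H), C^1_b(H))_{\beta,\infty} = C^\beta_b(H)$ with equivalent norms, so the operators $T^1_{t,x,y}$ and $T^2_{t,x,y,k}$ satisfy
\[
|T^1_{t,x,y}\phi| \leq C \|\phi\|_{C^\beta}\bigl(\Lambda_1(t)\bigr)^{1-\beta}\cdot 1^{\beta}|y|_H,
\quad
|T^2_{t,x,y,k}\phi| \leq C\|\phi\|_{C^\beta}\bigl(\Lambda_1(t)\Lambda_2(t)\bigr)^{1-\beta}\bigl(\Lambda_2(t)\bigr)^{\beta}|y|_H|k|_U,
\]
for every $\phi \in C^\beta_b$; the second bound simplifies to $C\|\phi\|_{C^\beta}\Lambda_1(t)^{1-\beta}\Lambda_2(t)|y|_H|k|_U$. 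Applying these to $\phi = \Phi_h$ and taking $\sup_{|h|_H = 1}$ produces \eqref{abs_stima_holder_1} and \eqref{abs_stima_holder_3}.

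The main obstacle is the rigorous justification of differentiation under the integral sign at the $C^1_b$ endpoint (so that $T^1$, $T^2$ are genuinely bounded on $C^1_b$ with the claimed operator norms) and the verification that the real-interpolation estimate is applied to the \emph{same} linear operator on both endpoint spaces; once this is set up, the constants produced by the interpolation functor depend only on $\beta$ and absorb into the universal $C_T$, and no further difficulty arises.
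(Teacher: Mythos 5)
Your proposal is correct and follows essentially the same route as the paper's proof in Appendix \ref{appendix B}: the same two endpoint bounds on $C_b(H)$ (via the Cameron--Martin formulas and Hypothesis \ref{hyp_22}) and on $C_b^1(H)$ (via differentiation under the Gaussian integral), the same real interpolation through $(C_b(H),C^1_b(H))_{\beta,\infty}=C^\beta_b(H)$, and the same scalar reduction via $\Phi_h$ and \eqref{abs_smgr_sc_smgr_vet}. The only cosmetic difference is that you perform the scalar reduction at the start rather than at the end, and your exponent bookkeeping $(\Lambda_1\Lambda_2)^{1-\beta}\Lambda_2^{\beta}=\Lambda_1^{1-\beta}\Lambda_2$ matches the paper's conclusion.
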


\begin{remark}
\label{rmk:sc_smbr_prop}
We recall that estimates \eqref{abs_stima_OU_grad_completa}, \eqref{abs_stima_OU_grad_G}, \eqref{abs_stima_der_seconda_direz}, \eqref{abs_stima_holder_1} and \eqref{abs_stima_holder_3} hold true with $R_t$ replaced by $\calr_t$ and $\Phi$ being a real-valued function.
\end{remark} 

We go back to the integral equation \eqref{d5_A_0} and for any $\mathcal T\in[0,T]$ we introduce the spaces $E_{0}^{\mathcal T}$ and $E_{0,\gamma}^{\mathcal T}$ as follows.
\begin{definition}
\label{abs_def:E_0}
$E_0^{\mathcal T}$ is the space of functions $u\in C_b([0,\T]\times H;H)$ such that $u(t,\cdot)$ is Fr\'echet differentiable on $H$ for any $t\in[0,\mathcal T]$ and the map $\nabla u:[0,\T]\times H\to L(H,H)$ is strongly continuous and globally bounded. 
 Moreover, for any $k\in U$ and $t\in[0,\mathcal T]$ the map $x\mapsto \nabla^G_ku(t,x)$ is Fr\'echet differentiable on $H$.

For any $\gamma\geq 0$, we set
\begin{align}
\label{abs_spazio_punto_fisso}
E_{0,\gamma}^{\mathcal T}:=\{u\in E_0^{\mathcal T}:\|u\|_{\gamma,\mathcal T}<+\infty\},
\end{align}
where
\begin{align}
\notag \|u\|_{\gamma,\mathcal T}
:= & \sup_{(t,x)\in[0,\mathcal T]\times H}e^{\gamma t}|u(t,x)|_H
+ \sup_{(t,x)\in[0,\mathcal T]\times H}e^{\gamma t}\|\nabla_{} u(t,x)\|_{L(H;H)} \\
\label{abs_norma_gamma}
& 
+ \sup_{(t,x)\in[0,\mathcal T]\times H}\sup_{|k|_U=1}e^{\gamma t}\|\nabla_\cdot\nabla^G_k u(t,x)\|_{L(H;H)}.
\end{align}
\end{definition}
It is easy to prove that $E_{0,\gamma}^{\mathcal T}$ is a Banach space for any $\gamma\geq0$.       Some further properties of functions $u \in E_0^{\mathcal T} $ are collected in the next remark.

\begin{remark} \label{see}
(1)   If $u \in E_0^{\mathcal T} $, $t \in [0, \T]$ and $k \in U$ then 
\begin{equation}\label{2ww} 
 \| \nabla_k^G  u(t, \cdot ) \|_{C^{\beta}(H,H)} \le 3 \sup_{x \in H}\| \nabla_k^G  u(t, x ) \|_{H} +  \sup_{x \in H}\|\nabla_\cdot\nabla^G_k u(t,x)\|_{L(H;H)}
\end{equation}
To verify the previous inequality we write for $x \not =y$ (we have to consider $|x-y| \le 1$ and $|x-y| >1$)
\begin{gather*}
|\nabla_k^G  u(t, x )
 - \nabla_k^G  u(t, y ) |_H |x-y|^{-  \beta} \le 2  \sup_{x \in H}\| \nabla_k^G  u(t, x ) \|_{H} + \sup_{x \in H}\|\nabla_\cdot\nabla^G_k u(t,x)\|_{L(H;H)}.
 \end{gather*}
 (ii) If $u \in E_0^{\mathcal T} $ then the mapping 
\begin{equation}\label{s34}
 t \mapsto \| \nabla^G  u(t, \cdot ) \|_{C^{\beta}(H,L(U;H))}  
\end{equation}
is Borel measurable on $[0,T]$ (with values in $\R_+$). It is not difficult to prove the measurability of   $  t \mapsto  \sup_{x \in H}\| \nabla^G  u(t, x ) \|_{L(U;H)}$. In order to show that
\begin{equation}\label{meas}
t \mapsto [ \nabla^G  u(t, \cdot ) ]_{C^{\beta}(H,L(U;H))}  \;\; \text{is measurable},
\end{equation}
we consider a countable dense subset $D$ of $\{ u \in U \, : \, |u|_U =1 \}$. Let $S$ be a countable dense subset of $H$. 
We note that by the continuity property of $\nabla^G_k  u$
\begin{gather*}
[ \nabla^G  u(t, \cdot ) ]_{C^{\beta}(H,L(U;H))} =
\sup_{x, y \in H, \, x \not = y}  \sup_{|k|_U = 1} \frac{|\nabla^G_k  u(t, x ) -  \nabla^G_k  u(t, y )|_H}{|x-y|^{\beta}_H}
\\ 
= \sup_{x, y \in S, \, x \not = y}  \sup_{k \in D} \frac{|\nabla^G_k  u(t, x ) -  \nabla^G_k  u(t, y )|_H}{|x-y|^{\beta}_H}. 
\end{gather*} 
Since for fixed $x, y \in S, \, x \not = y$, $k \in D$, the mapping:
 $ t \mapsto \frac{|\nabla^G_k  u(t, x ) -  \nabla^G_k  u(t, y )|_H}{|x-y|^{\beta}_H}$ is continuous on $[0,T]$ we get assertion \eqref{meas}.  
\end{remark}
 In the next result we will also use the H\"older continuity of $\widetilde C(t, \cdot)$, {$t\in[0,T]$.} 

\begin{theorem}
\label{teo:abs_solmildPDEn} 
Let Hypotheses \ref{hyp_1}, \ref{hyp_22} and \ref{hyp_3} hold true and let $A_0$ be the generator of a strongly continuous semigroup $e^{t A_0}$ on $H.$

Then,
there exists a unique solution $u^{\mathcal T}$ to \eqref{d5_A_0} in the sense of Definition \ref{defn:soluzione_int} which belongs to $E_{0}^{\mathcal T}$ and there exists a positive constant $M=M_T$ which only depends on $T$,  $\sup_{t \in [0,T]}\| e^{tA_0}\|_{L(H)}$   and  $\sup_{s\in[0,T]}\|\widetilde C(s,\cdot)\|_{C_b^\beta(H;U)}$  but not on $\mathcal T$, such that $\|u^{\mathcal T}\|_{0,\mathcal T}\leq M$. 
\end{theorem}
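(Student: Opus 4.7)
The plan is to obtain $u^{\mathcal T}$ as the unique fixed point in $E_{0,\gamma}^{\mathcal T}$ of the map $\mathcal K$ defined by the right-hand side of \eqref{d5_A_0}, choosing $\gamma = \gamma_T$ large enough (and independent of $\mathcal T \in (0,T]$) to turn $\mathcal K$ into a strict contraction. The exponential weight $e^{\gamma t}$ in \eqref{abs_norma_gamma} is precisely what allows us to exploit \eqref{int_L1L2beta}, which is the only place where Hypothesis \ref{hyp_3} enters the argument.

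For $v \in E_{0,\gamma}^{\mathcal T}$ both integrands appearing in $(\mathcal K v)(t,x)$ turn out to be bounded and $\beta$-H\"older continuous in $x$. For the first integrand this follows from Hypothesis \ref{hyp_1}(iv) together with the uniform bound on $\|e^{\sigma A_0}\|_{L(H)}$ on $[0,T]$. For the second integrand $\nabla^G v(s,\cdot)\widetilde C(s,\cdot)$ I would invoke inequality \eqref{2ww} of Remark \ref{see}, which converts the weighted sup of $\nabla\nabla^G_k v$ (encoded in $\|v\|_{\gamma,\mathcal T}$) into a genuine $\beta$-H\"older seminorm of $\nabla^G_k v$, and then multiply by $\widetilde C(s,\cdot)\in C_b^\beta(H;U)$ in the Banach algebra of bounded $\beta$-H\"older maps, obtaining a product-type bound of the form $M_0 + M_1 e^{-\gamma s}\|v\|_{\gamma,\mathcal T}$. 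Applying Lemma \ref{abs_lem_interpolazione} to $R_{s-t}$ acting on such H\"older data (and Lemma \ref{abs_lemma_collezione} for second derivatives), and integrating in $s$, I would arrive at the estimate
\begin{align*}
\|\mathcal K v\|_{\gamma,\mathcal T} \,\le\, M_0' \, I_1(\gamma) \,+\, M_1' \, I_2(\gamma)\, \|v\|_{\gamma,\mathcal T},
\end{align*}
where $I_1(\gamma) := \int_0^T e^{-\gamma r}\Lambda_1(r)^{1-\beta}dr$ and $I_2(\gamma) := \int_0^T e^{-\gamma r}\Lambda_1(r)^{1-\beta}\Lambda_2(r)dr$ (finiteness of $I_1$ is free because $\Lambda_2$ is bounded below by Hypothesis \ref{hyp_22}), and $M_0', M_1'$ depend only on $T$, $\|G\|_{L(U;H)}$, $\sup_{t\in[0,T]}\|e^{tA_0}\|_{L(H)}$ and $\sup_{s\in[0,T]}\|\widetilde C(s,\cdot)\|_{C_b^\beta(H;U)}$. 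The strong continuity of $\nabla(\mathcal K v)$ and the Fr\'echet differentiability of $x \mapsto \nabla^G_k(\mathcal K v)(t,x)$ required for $\mathcal K v \in E_0^{\mathcal T}$ would be obtained from Lemma \ref{abs_lemma_collezione}(iii) by dominated convergence inside the time integral, with measurability of the integrand guaranteed by Remark \ref{see}(ii).

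I would then choose $\gamma_T$ so that $M_1' I_2(\gamma_T) \le 1/2$, which is possible by \eqref{int_L1L2beta} uniformly in $\mathcal T \in (0,T]$. The same computation applied to a difference $v_1 - v_2$ makes the inhomogeneous first integrand cancel, leaving $\|\mathcal K v_1 - \mathcal K v_2\|_{\gamma_T,\mathcal T} \le \tfrac12 \|v_1 - v_2\|_{\gamma_T,\mathcal T}$, so Banach's theorem produces the unique fixed point $u^{\mathcal T} \in E_{0,\gamma_T}^{\mathcal T}$ with $\|u^{\mathcal T}\|_{\gamma_T,\mathcal T} \le 2 M_0' I_1(\gamma_T)$. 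Since $\|\cdot\|_{0,\mathcal T} \le e^{\gamma_T T}\|\cdot\|_{\gamma_T,\mathcal T}$ and $\gamma_T$ depends only on the data listed above, the uniform constant $M = M_T$ follows.

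The main obstacle is, I believe, the H\"older product estimate for $\nabla^G v(s,\cdot)\widetilde C(s,\cdot)$ and the careful bookkeeping of weights: one has to ensure that the H\"older-norm bounds fed into Lemma \ref{abs_lem_interpolazione} are correctly weighted by $e^{-\gamma s}$ so that the induced bounds on $\mathcal K v$ carry the weight $e^{-\gamma t}$, and that the non-contractive part $M_0' I_1(\gamma)$ from the first integrand does not destroy independence of $\mathcal T$. This forces one to work throughout with $\|\cdot\|_{\gamma,\mathcal T}$ rather than $\|\cdot\|_{0,\mathcal T}$, and to verify that the constants $M_0', M_1'$ are genuinely independent of $\mathcal T \in (0,T]$, an issue which is resolved by the uniform character of Hypotheses \ref{hyp_22} and \ref{hyp_3} on the whole interval $(0,T]$.
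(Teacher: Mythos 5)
Your proposal is correct and follows essentially the same strategy as the paper: both set up the right-hand side of \eqref{d5_A_0} as a map on the weighted space $E_{0,\gamma}^{\mathcal T}$, use the product estimate for $\nabla^G v(s,\cdot)\widetilde C(s,\cdot)$ together with Lemma \ref{abs_lem_interpolazione} (and \eqref{2ww} to recover the $\beta$-H\"older seminorm from the norm $\|\cdot\|_{\gamma,\mathcal T}$), and invoke \eqref{int_L1L2beta} to choose $\gamma$ large, uniformly in $\mathcal T\in(0,T]$, so that the map is a contraction; the membership in $E_0^{\mathcal T}$ is checked exactly as in the paper's first step, by dominated convergence inside the time integral using the explicit representation of $\nabla_y R_r$. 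The only point where you diverge is the final uniform bound $\|u^{\mathcal T}\|_{0,\mathcal T}\le M$: the paper does not read this off the fixed-point inequality but instead derives, in a separate third step, the integral inequality $\|\nabla^G u^{\mathcal T}(t,\cdot)\|_{C^\beta}\le M_1+M_1\int_t^{\mathcal T}h(s-t)\|\nabla^G u^{\mathcal T}(s,\cdot)\|_{C^\beta}\,ds$ with $h=\Lambda_1^{1-\beta}\Lambda_2$ and closes it with the generalized Gronwall Lemma \ref{lem:gron_mod}. Your shortcut --- $\|u^{\mathcal T}\|_{0,\mathcal T}\le\|u^{\mathcal T}\|_{\gamma_T,\mathcal T}\le 2\,\|\mathscr G\,0\|_{\gamma_T,\mathcal T}$ with $\gamma_T$ independent of $\mathcal T$ --- is also valid and a bit shorter, at the price of a constant containing the factor $e^{\gamma_T T}$ (and note that the inhomogeneous term is really bounded by $C\,e^{\gamma_T T}\|\Lambda_1^{1-\beta}\Lambda_2\|_{L^1(0,T)}$ rather than by your $I_1(\gamma)$ alone, since the weight $e^{\gamma t}$ has nothing to be absorbed into there); the Gronwall route yields a cleaner constant but both are independent of $\mathcal T$, which is all that is needed.
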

\begin{proof} 
Let us introduce the operator $\mathscr G$ defined on  $E_{0,\gamma}^{\mathcal T}$  by
\begin{align*}
\label{abs_operatore_contrazione}
(\mathscr G u)(t,x)
:= & \int_t^{\mathcal T}R_{s-t}\left[e^{(\mathcal T-s)A_0}G\widetilde C(s,\cdot)\right](x)ds
+ \int_t^{\mathcal T}R_{s-t}\left[\nabla^Gu(s,\cdot)\widetilde C(s,\cdot)\right](x)ds,
\end{align*}
for any $(t,x)\in[0,\mathcal T]\times H$, with $\gamma>0$ to be chosen. We proceed in some steps. 

\textbf{\textit{Step I.}}   We have to verify that $\mathscr G :  E_{0,\gamma}^{\mathcal T} \to E_{0,\gamma}^{\mathcal T}$. We only check the more difficult part, i.e. we only verify that if $u \in E_{0,\gamma}^{\mathcal T}$, for a fixed  $y \in H$, 
\begin{equation}\label{dqq}
 \nabla_y (\mathscr G u) :[0,\T]\times H\to H \;\;\; \text{ is  continuous}
 \;\; \text{on} \; [0,\T]\times H.
\end{equation}    
 We will only prove that   
 \begin{equation}\label{dqq1}
 \nabla_y \int_t^{\mathcal T}R_{s-t}\left[\nabla^Gu(s,\cdot)\widetilde C(s,\cdot)\right](x)ds \;\;\; \text{ is  continuous}
 \;\; \text{on} \; [0,\T]\times H,
 \end{equation}
the other term $ \nabla_y \displaystyle\int_t^{\mathcal T}R_{s-t}\left[e^{(\mathcal T-s)A_0}G\widetilde C(s,\cdot)\right](x)ds$  can be treated in a similar way.  
 
First note that  $B(s,x) := \nabla^Gu(s, x)\widetilde C(s,x)$ is a bounded continuous function on $[0, \T] \times H$ with values in $H$. 
We also define 
 $B(s,x) = 0$ for $s \ge \T$, $x \in H$.   
 Using the estimate \eqref{abs_stima_holder_1} and the fact that 
\begin{gather*}
 \int_0^T \Lambda_1^{1-\beta}(s) ds < \infty 
\end{gather*}
we  consider  the function
\begin{gather*}
 v(t,x):= \int_t^{\mathcal T} \nabla_y R_{s-t}\left[\nabla^Gu(s,\cdot)\widetilde C(s,\cdot)\right](x)ds =
   \int_0^{\T -t } \nabla_y R_{r} B(r+ t,\cdot) (x)dr, \;\;
\end{gather*}  
$ (t, x) \in [0, \T] \times H. $ It is enough to prove that $v$ is continuous on $[0, \T]  \times H$. Let us prove the continuity   at a fixed $(t_0,x_0)$. 
 We write
 \begin{gather*}
|v(t,x) - v(t_0,x_0)| \le 
 \Big | \int_0^{\T - t} \nabla_y R_{r} B(r+ t,\cdot) (x)dr - \int_0^{\T - t_0} \nabla_y R_{r} B(r+ t,\cdot) (x)dr \Big|
\\ + 
\Big |   \int_0^{\T - t_0}   \big[\nabla_y R_{r}  B(r+ t,\cdot) (x) -  \nabla_y R_{r} B(r+ t_0,\cdot) (x_0) \big]dr  
\Big|   = J_1 (t,x) + J_2 (t,x).
\end{gather*}
Now 
\begin{gather*}
  J_1 (t,x) \le \Big |   \int_{\T-t_0}^{\T-t} |\nabla_y R_{r} B(r+ t,\cdot) (x)|dr \Big |
\le C{\sup_{t\in[0,\T]}\|B(t,\cdot)\|_{C^\beta(H;H)}} |y|_H\Big |   \int_{\T-t_0}^{\T-t}   \Lambda_1^{1-\beta}(r) dr \Big | 
 \end{gather*}
and so $\lim_{t \to t_0}$ $\sup_{x \in  H}  J_1 (t,x) =0$. Concerning $J_2$ we note that, for any $r \in ]0, \T - t_0[$ the mapping   
$$
(t,x) \mapsto (\nabla_y R_{r}  B(r+ t,\cdot) (x) -  \nabla_y R_{r} B(r+ t_0,\cdot) (x_0) )
$$
$$=\int_H\langle \Gamma(r)y,Q_r^{-1/2}z\rangle_H [B(r+t , e^{tA}x+z)
- B(r+t_0 , e^{t_0 A}x_0+z)] \mathcal N(0,Q_r)(dz)
$$
verifies $\lim_{(t,x) \to (t_0, x_0)} |\nabla_y R_{r}  B(r+ t,\cdot) (x) -  \nabla_y R_{r} B(r+ t_0,\cdot) (x_0) |$ $=0$ by the dominated convergence theorem. Moreover, using the estimate \eqref{abs_stima_holder_1} and again the Lebesgue theorem we infer 
\begin{gather*}
 \lim_{(t,x) \to (t_0, x_0) } \int_0^{\T- t_0} |\nabla_y R_{r}  B(r+ t,\cdot) (x) -  \nabla_y R_{r} B(r+ t_0,\cdot) (x_0) | dr =0
\end{gather*} 
and so $ \lim_{(t,x) \to (t_0, x_0) } J_2(t,x) =0$. This shows \eqref{dqq}.

\vskip 2mm
\textbf{\textit{Step II.}}
We claim that a suitable choice of $\gamma$ implies that $\mathscr G $ is a contraction on $E_{0,\gamma}^{\mathcal T}$. For any $u_1,u_2\in E_{0,\gamma}^{\mathcal T}$ we have to estimate the difference $\|\mathscr G u_1-\mathscr G u_2\|_{\gamma,\mathcal T}$. Let us only estimate the term
\begin{align*}
e^{\gamma t}\|\nabla_{\cdot}\nabla^G_k\mathscr G u_1(t,x)-\nabla_{\cdot}\nabla^G_k\mathscr G u_2(t,x)\|_{L(H;H)}, \quad t\in[0,\mathcal T],  \ x\in H, \ k\in U,  
\end{align*}
since the other addends can be estimated in a similar way. We have, for  $ t\in[0,\mathcal T],$ $y \in H$, $|y |_H \le 1$,
\begin{gather}
\notag  
e^{\gamma t} |\nabla_{y}\nabla^G_k\mathscr G u_1(t,x)-\nabla_{y}\nabla^G_k\mathscr G u_2(t,x) |_{H} \le  
 \\
\label{abs_stima_per_punto_fisso_1}
\leq \int_t^{\mathcal T}e^{-\gamma(s-t)} |\nabla_{y} \nabla^G_kR_{s-t}\left[e^{\gamma s}\left(\nabla^G u_1-\nabla^G u_2\right)\widetilde C(s,\cdot)\right](x)   |_{H}ds.
\end{gather}
Since $u_1,u_2\in E_{0,\gamma}^{\mathcal T}$, 
the map $x\mapsto \nabla^Gu_i(s,x)\widetilde C(s,x)$ is $\beta$-H\"older continuous from $H$ into $H$, $i=1,2$, uniformly with respect to $s\in [0,\mathcal T]$, and
\begin{align*}
 \|\nabla^Gu_i(s,\cdot)\widetilde C(s,\cdot)\|_{C^\beta(H;H)}\leq 2\sup_{|k|_U =1}\|\nabla^G_ku_i(s,\cdot)\|_{C_b^\beta(U;H)}\|\widetilde C(s,\cdot)\|_{C_b^\beta(H;U)}, 
\end{align*}
for any $s\in[0,\mathcal T]$, with $i=1,2$. By applying \eqref{abs_stima_holder_3} to   \eqref{abs_stima_per_punto_fisso_1} and taking into account \eqref{dqq1} we get, uniformly in $y$ and in $ (t,x)\in[0,\mathcal T]\times H$ 
\begin{align} \notag
 \int_t^{\mathcal T} e^{-\gamma(s-t)}&  | \nabla_y \nabla^G_kR_{s-t}\left[e^{\gamma s}\left(\nabla^Gu_1-\nabla^Gu_2\right)\widetilde C(s,\cdot)\right](x) |_{H}  ds 
 \\ \notag
 \leq & C\int_t^{\mathcal T}e^{-\gamma(s-t)}\left(\Lambda_1(t-s)\right)^{1-\beta}\Lambda_2(s-t)\,ds \, \cdot  \\ \notag
 & \sup_{r\in[0,T]}\|\widetilde C(r,\cdot)\|_{C_b^\beta(H;U)}\sup_{r\in[0,\mathcal T]}e^{\gamma r}\sup_{k\in U, \; |k|_U=1}  \|\nabla^G_k(u_1-u_2)(r,\cdot)\|_{C_b^\beta(H;H)} 
 \\ \label{miao}
 \leq&  C_{\gamma,\mathcal T}\sup_{r\in[0,T]}\|\widetilde C(r,\cdot)\|_{C_b^\beta(H;U)} \|u_1-u_2\|_{\gamma,\mathcal T}
\end{align} 
(see also \eqref{2ww})  where, from Hypothesis \ref{hyp_3}, $C_{\gamma,\mathcal T}$ is a positive constant which goes to $0$ as $\gamma\rightarrow +\infty$, uniformly with respect to $\mathcal T\in[0,T]$. We get
\begin{gather*}  
 e^{\gamma t}\|\nabla_{\cdot}\nabla^G_k\mathscr G_nu_1(t,x)-\nabla_{\cdot}\nabla^G_k\mathscr G_nu_2(t,x)\|_{L(H;H)} \leq  C_{\gamma,\mathcal T}\sup_{r\in[0,T]}\|\widetilde C(r,\cdot)\|_{C_b^\beta(H;U)}\|u_1-u_2\|_{\gamma,\mathcal T}
\end{gather*}
Similar arguments applied to the other terms of the norm $\|\mathcal G u_1-\mathcal G u_2\|_{\gamma,\mathcal T}$ give
\begin{align*}
\|\mathscr G u_1-\mathscr G u_2\|_{\gamma,\mathcal T}
\leq C_{\gamma,\mathcal  T}\|u_1-u_2\|_{\gamma,\mathcal T},
\end{align*}
 Choosing $\gamma$ large enough we deduce that $\mathscr G $ is a contraction on $E_{0,\gamma}^{\mathcal T}$ and therefore it admits a unique fixed point $u^{\mathcal T}$. 


\vskip 2mm \textbf{\textit{ Step III.}} 
 Let us prove the last part of the statement.  We will estimate the crucial term
\begin{align*}
 \|\nabla_{\cdot}\nabla^G_k  u^{\T}(t,x) \|_{L(H;H)}, \quad t\in[0,\mathcal T],  \ x\in H, \ k\in U,  
\end{align*}
  with $|k|_U=1$, since the other addends can be estimated in a similar way.
 We have, using \eqref{2ww}, for any $t \in [0,\T]$,
 \begin{equation*}    
 \| \nabla^G  u_{}^{\mathcal T}(t, \cdot ) \|_{C^{\beta}(H;U  )} \le 3 \sup_{x \in H}\| \nabla^G  u^{\mathcal T} (t, x ) \|_{U} +  \sup_{x \in H} \sup_{|w|_U =1}\|\nabla_\cdot\nabla^G_w u^{\mathcal T}(t,x)\|_{H}.
 \end{equation*} 
 Hence, starting from 
 \begin{align*}
   u^{\T}(t,x)
 := & \int_t^{\mathcal T}R_{s-t}\left[e^{(\mathcal T-s)A_0}G\widetilde C(s,\cdot)\right](x)ds
 + \int_t^{\mathcal T}R_{s-t}\left[\nabla^G u^{\T}(s,\cdot)\widetilde C(s,\cdot)\right](x)ds
 \end{align*}
 and arguing as before (using also that  $\inf_{t\in(0,T]}\Lambda_2(t)>0$) we arrive at
 \begin{align*}
 \| \nabla^G  u_{}^{\mathcal T}(t, \cdot ) \|_{C^{\beta}(H;U  )}
 \leq M_1+M_1\int_t^{\mathcal T}h(s-t) \,\| \nabla^G  u_{}^{\mathcal T}(s, \cdot ) \|_{C^{\beta}(H,U  )} ds , \quad t\in[0,\mathcal T],   
 \end{align*}
 where the function $r\mapsto h(r) = \Lambda_1(r)^{1-\beta}\Lambda_2(r)\in   L^1(0,T)$ by Hypothesis \ref{hyp_3}   and $M_1$  is a  positive constant which depend on $T$, $\sup_{t \in [0,T]}\| e^{tA_0}\|_{L(H)}$ and  $\sup_{s\in[0,T]}\|\widetilde C(s,\cdot)\|_{C_b^\beta(H;U)}$, but not on  $\mathcal T$. The generalized Gronwall's lemma \ref{lem:gron_mod} gives  
 \begin{gather*} 
   \| \nabla^G  u_{}^{\mathcal T}(t, \cdot ) \|_{C^{\beta}(H;U  )} 
 \leq M_1[ 1+ \exp\left(\|h\|_{L^1(0,T)}\right ) \, \|h\|_{L^1(0,T)}],    
 \quad t\in[0,\mathcal T].      
 \end{gather*}
 Using the previous estimate we can bound 
 $|\nabla_{y}\nabla^G_k  u^{\T}(t,x) |_{H}$
 for any $y \in H$, $|y|_H \le 1$, $|k|_U =1$, arguing as in \eqref{miao}. We obtain 
  \begin{gather*}
  \sup_{(t,x) \in [0, \T] \times H}\|\nabla_{\cdot}\nabla^G_k  u^{\T}(t,x) \|_{L(H;H)} 
\leq \tilde M.       
\end{gather*}
 Arguing in a similar way, we obtain 
 \begin{gather*} 
\|u^{\mathcal T}\|_{0,\mathcal T} \le   \tilde M_1 
\end{gather*}
with $\tilde M_1$  depending on $T$, $\sup_{t \in [0,T]}\| e^{tA_0}\|_{L(H)}$,  $\sup_{s\in[0,T]}\|\widetilde C(s,\cdot)\|_{C_b^\beta(H;U)}$, $\Lambda_1$ and $\Lambda_2$  but not on  $\mathcal T$.  
\end{proof}

 \begin{corollary}
\label{teo:abs_solmildPDEnq1} 
Let Hypotheses \ref{hyp_1}, \ref{hyp_22} and \ref{hyp_3} hold true. Consider,
 for any $n\in\N$, the  operator $A_n$ which generates a {\rm strongly continuous  group} of linear and bounded operators $(e^{tA_n})\subset L(H)$ such that for any $T>0$ we have
\begin{equation}\label{d333}
 \sup_{t \in [0,T]} \sup_{n \ge 1} \| e^{t A_n}\|_{L(H)} < \infty, \quad 
\lim_{n \to \infty} e^{t A_n} x = e^{t A} x, \;\; x \in H,\;\; t \ge 0.
\end{equation}
(cf. (A) in Hypothesis \ref{hyp_2_1}). Then,
there exist  unique solutions $u_n^{\mathcal T}$ to \eqref{d5}   which belong to $E_{0}^{\mathcal T}$ and there exists a positive constant $M=M_T$, 
independent of $n$ and $\mathcal T$, 
such that $\|u_n^{\mathcal T}\|_{0,\mathcal T}\leq M$.
\end{corollary}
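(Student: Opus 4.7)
The plan is essentially to reduce Corollary \ref{teo:abs_solmildPDEnq1} to a parametric application of Theorem \ref{teo:abs_solmildPDEn}, exploiting the uniform bound on the norms $\|e^{tA_n}\|_{L(H)}$ provided by \eqref{d333}. Since by assumption each $A_n$ generates a strongly continuous group of bounded linear operators $(e^{tA_n})_{t\in\R}\subset L(H)$, in particular $A_n$ generates a strongly continuous semigroup on $H$. Hence I would apply Theorem \ref{teo:abs_solmildPDEn} with $A_0:=A_n$ to obtain, for each fixed $n\in\N$, a unique solution $u_n^{\mathcal T}\in E_0^{\mathcal T}$ to the integral equation \eqref{d5}.

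For the uniform estimate, I would carefully trace the dependence of the constant produced in the proof of Theorem \ref{teo:abs_solmildPDEn}. The constant $M_T$ there depends only on $T$, on $\sup_{t\in[0,T]}\|e^{tA_0}\|_{L(H)}$ and on $\sup_{s\in[0,T]}\|\widetilde C(s,\cdot)\|_{C_b^\beta(H;U)}$. Replacing $A_0$ with $A_n$ and setting
\begin{equation*}
K_T:=\sup_{t\in[0,T]}\sup_{n\geq 1}\|e^{tA_n}\|_{L(H)}<+\infty,
\end{equation*}
which is finite by \eqref{d333}, the corresponding constant $M_n$ obtained from Theorem \ref{teo:abs_solmildPDEn} can be bounded from above by a quantity $M=M_T$ depending on $T$, $K_T$, $\sup_{s\in[0,T]}\|\widetilde C(s,\cdot)\|_{C_b^\beta(H;U)}$, and on the functions $\Lambda_1,\Lambda_2$ from Hypothesis \ref{hyp_22}, but neither on $n$ nor on $\mathcal T\in(0,T]$. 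This yields $\|u_n^{\mathcal T}\|_{0,\mathcal T}\leq M$ uniformly in $n$ and $\mathcal T$.

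Operationally, the argument reduces to re-reading the three steps in the proof of Theorem \ref{teo:abs_solmildPDEn}: in Step I the continuity arguments are independent of the choice of the semigroup in the inhomogeneous term $e^{(\mathcal T-s)A_n}G\widetilde C(s,\cdot)$, provided this is uniformly bounded; in Step II the contraction constant $C_{\gamma,\mathcal T}$ comes from $\int_0^{\mathcal T}e^{-\gamma(s-t)}\Lambda_1(s-t)^{1-\beta}\Lambda_2(s-t)\,ds$, which depends solely on $\gamma$, $T$, $\Lambda_1$ and $\Lambda_2$; in Step III the a priori estimate via the generalized Gronwall Lemma \ref{lem:gron_mod} involves only $\|h\|_{L^1(0,T)}$ (with $h=\Lambda_1^{1-\beta}\Lambda_2$), $\sup_{s}\|\widetilde C(s,\cdot)\|_{C_b^\beta(H;U)}$ and $\sup_{t\in[0,T]}\|e^{tA_n}\|_{L(H)}\leq K_T$. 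Since all these quantities admit an upper bound independent of $n$, the resulting constant is uniform in $n$, which finishes the proof.

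The only non-routine point is verifying that in the a priori bound of Step III of Theorem \ref{teo:abs_solmildPDEn} the dependence on the ``auxiliary'' semigroup $e^{t A_n}$ enters exclusively through $\sup_{t\in[0,T]}\|e^{tA_n}\|_{L(H)}$. This is however clear because $e^{(\mathcal T-s)A_n}$ appears in \eqref{d5} only multiplicatively against $G\widetilde C(s,\cdot)$, so it contributes merely a factor $K_T$ to the $C^\beta(H;H)$-norm of the source term $e^{(\mathcal T-s)A_n}G\widetilde C(s,\cdot)$. Hence the whole fixed-point scheme of Theorem \ref{teo:abs_solmildPDEn} goes through with constants controlled by $K_T$ uniformly in $n$, and the conclusion follows at once.
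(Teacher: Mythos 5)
Your argument is exactly the intended one: the paper states the corollary without a separate proof precisely because it is the parametric application of Theorem \ref{teo:abs_solmildPDEn} with $A_0=A_n$, the uniformity in $n$ coming from the fact that the constant $M_T$ there depends on the auxiliary semigroup only through $\sup_{t\in[0,T]}\|e^{tA_0}\|_{L(H)}$, which \eqref{d333} bounds uniformly in $n$. Your tracing of the constants through the three steps of the theorem's proof is correct and matches the paper's reasoning.
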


\begin{remark} \label{ma} Note that the previous result implies the validity of \eqref{sg} in Hypothesis \ref{hyp_2_1} for any choice of generators $(A_n)$ verifying assertion (A) in Hypothesis \ref{hyp_2_1}.  
\end{remark}

\vskip 1mm In the next two sections  we provide sufficient conditions for
the validity of \eqref{df} in  Hypothesis \ref{hyp_2_1}. We will consider 
two different types of approximations $(A_n)_{n\in\N}$  for $A$: the Yosida approximations which we use to treat  semilinear  stochastic damped equations and the 
finite dimensional approximations which we use to deal with  semilinear stochastic heat equations. 

\subsection{Sufficient conditions to ensure Hypothesis \ref{hyp_2_1},
 using  the Yosida approximations for $A$}
 \label{sub:yosida_appr_hilb_sc_norm}  
   

Let us show that the solutions $u_n^{\mathcal T}$ of \eqref{d5}  satisfies \eqref{df} of Hypothesis \ref{hyp_2_1} when 
 \begin{equation} \label{yosida}
A_n:=nAR(n,A)
 \end{equation}
for any $n\in\N$. We notice that with this choice of  $A_n$ Hypothesis \ref{hyp_2_1}, part $(A)$, is fulfilled (cf Remark \ref{ma}). We introduce  the following additional assumption which will be verified in Section \ref{Sec:stoc_damp_we} for the damped equation.
\begin{hypothesis}
\label{hyp:sigma_3}
For any $T>0$ there exists a positive constant $C=C_T$ such that
\begin{align}
\label{stima_hyp_HS}
\|\Gamma(t)G\|_{L_2(U;H)}\leq \Lambda_2(t){C_T}, \quad t\in(0,T],
\end{align}
where $\Lambda_2(t)$ is the function introduced in Hypothesis   \ref{hyp_22}.
\end{hypothesis}

The first estimate of the Hilbert-Schmidt norm of $u_n^{\mathcal T}$ follows from the following lemma.
\begin{lemma} \label{ci2}
Let Hypotheses \ref{hyp_1}, \ref{hyp_22} and \ref{hyp:sigma_3} hold true. Then:
\begin{itemize}
\item[(i)] for any $\Phi\in B_b(H;H)$ we have $\nabla^G_\cdot R_t[\Phi](x)\in L_2(U;H)$, $x\in H$, $t>0$, and for any $T>0$ there exists a positive constant $C=C_T$ such that
\begin{align}
\label{abs_stima_OU_HS}
\sup_{x\in H}\|\nabla^G_\cdot R_t[\Phi](x)\|_{L_2(U;H)}\leq \Lambda_2(t){C\|\Phi\|_\infty}, \quad t\in(0,T].   
\end{align}
If $\Phi\in C_b(H;H)$ then $\nabla^GR_t[\Phi]\in C_b(H;L_2(U;H))$.
\item [(ii)] The map $U\ni k\mapsto \nabla_y\nabla^G_k R_t[\Phi](x)\in L_2(U;H)$ and for any $T>0$ there exists a positive constant $C=C_T$ such that (cf. Hypothesis \ref{hyp_22})) 
\begin{align}
\label{abs_stima_HS_derivata_seconda}  
\sup_{x\in H}\|\nabla_y\nabla^G_{\cdot}R_t[\Phi](x)\|_{L_2(U;H)}\leq \Lambda_1(t)\Lambda_2(t){C\|\Phi\|_\infty}|y|_H, \quad t\in(0,T].
\end{align}
\item[(iii)] For any $T>0$ there exists a positive constant $C=C_T>0$ such that for any $\beta\in(0,1)$ we have
\begin{align}
\label{stima_HS_der_second_holder}
\sup_{x\in H}\|\nabla_y\nabla^G_{\cdot}R_t[\Phi](x)\|_{L_2(U;H)}\leq\left( \Lambda_1(t)\right)^{1-\beta}\Lambda_2(t){C\|\Phi\|_{C^\beta(H;H)}}|y|_H, \quad t\in(0,T],
\end{align}
for any $\Phi\in C_b^\beta(H;H)$.
\end{itemize}
\end{lemma}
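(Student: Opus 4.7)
All three bounds are obtained from the explicit Gaussian formulae \eqref{abs_gateaux_derivative}--\eqref{abs_second_order_derivatives} combined with the extra Hilbert--Schmidt assumption \eqref{stima_hyp_HS}. Throughout, let $(f_j)$ be an ONB of $U$, $(e_i)$ an ONB of $H$, and set $\Phi_i=\langle\Phi,e_i\rangle_H$. I will use the classical isometry $a\mapsto\langle a,Q_t^{-1/2}\cdot\rangle_H\in L^2(\mathcal N(0,Q_t))$, which associates to every $a\in H$ a centered Gaussian variable of variance $|a|_H^2$. For \emph{part (i)}, projecting \eqref{abs_gateaux_derivative} with direction $Gf_j$ onto $e_i$ and applying Cauchy--Schwarz under the Gaussian measure yields
\[
|\langle\nabla^G_{f_j}R_t[\Phi](x),e_i\rangle_H|^2\le|\Gamma(t)Gf_j|_H^2\int_H|\Phi_i(e^{tA}x+z)|^2\,\mathcal N(0,Q_t)(dz).
\]
Summing over $i$ then $j$, using $\sum_j|\Gamma(t)Gf_j|_H^2=\|\Gamma(t)G\|_{L_2(U;H)}^2\le C^2\Lambda_2(t)^2$ (Hypothesis \ref{hyp:sigma_3}) and $\sum_i|\Phi_i|^2\le\|\Phi\|_\infty^2$, gives \eqref{abs_stima_OU_HS}. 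Continuity of $\nabla^G R_t[\Phi]$ into $L_2(U;H)$ for $\Phi\in C_b(H;H)$ follows by dominated convergence with the same majorant.

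For \emph{part (ii)} I start from \eqref{abs_second_order_derivatives} and pass to the standardized variable $\zeta=Q_t^{-1/2}z$; setting $a=\Gamma(t)y$, $b=\Gamma(t)Gf_j$, the integrand becomes $(\langle a,\zeta\rangle\langle b,\zeta\rangle-\langle a,b\rangle_H)\Phi(e^{tA}x+Q_t^{1/2}\zeta)$. Wick's formula for fourth Gaussian moments gives
\[
\mathbb E\bigl[(\langle a,\zeta\rangle\langle b,\zeta\rangle-\langle a,b\rangle_H)^2\bigr]=|a|_H^2|b|_H^2+\langle a,b\rangle_H^2\le 2|a|_H^2|b|_H^2,
\]
and Cauchy--Schwarz on each coordinate $e_i$ followed by summation over $i$ and $j$ produces
\[
\|\nabla_y\nabla^G_\cdot R_t[\Phi](x)\|_{L_2(U;H)}^2\le 2|\Gamma(t)y|_H^2\,\|\Gamma(t)G\|_{L_2(U;H)}^2\,\|\Phi\|_\infty^2.
\]
Hypotheses \ref{hyp_22} and \ref{hyp:sigma_3} then deliver \eqref{abs_stima_HS_derivata_seconda}.

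For \emph{part (iii)} the plan is real interpolation between two endpoint bounds on the linear operator $T_{t,x,y}\colon\Phi\mapsto\nabla_y\nabla^G_\cdot R_t[\Phi](x)$ with values in $L_2(U;H)$. Part (ii) provides $\|T_{t,x,y}\Phi\|_{L_2(U;H)}\le C\Lambda_1(t)\Lambda_2(t)|y|_H\|\Phi\|_{C_b^0}$. For the $C_b^1$ endpoint, the commutation relation $\nabla_y R_t[\Phi](x)=R_t[\nabla_{e^{tA}y}\Phi](x)$ (obtained by differentiating under the expectation, valid for $\Phi\in C_b^1$) lets me rewrite $T_{t,x,y}\Phi=\nabla^G_\cdot R_t[\nabla_{e^{tA}y}\Phi](x)$; applying part (i) to the bounded continuous $H$-valued function $\nabla_{e^{tA}y}\Phi$ and using $\sup_{t\in[0,T]}\|e^{tA}\|_{L(H)}<\infty$ gives $\|T_{t,x,y}\Phi\|_{L_2(U;H)}\le C\Lambda_2(t)|y|_H\|\Phi\|_{C_b^1}$. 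The vector-valued version of \eqref{abs_interpolation_result} -- or, equivalently, an elementary $K$-functional decomposition $\Phi=\Phi_0+\Phi_1$ with $\|\Phi_0\|_{C^0}\le c\rho^\beta\|\Phi\|_{C^\beta}$, $\|\Phi_1\|_{C^1}\le c\rho^{\beta-1}\|\Phi\|_{C^\beta}$ optimized in $\rho$ -- combines the two endpoint estimates into \eqref{stima_HS_der_second_holder}. The only genuinely delicate point is in part (ii): naively bounding both $\Gamma(t)y$ and $\Gamma(t)Gf_j$ by operator norms would spoil one Hilbert--Schmidt factor and produce $\Lambda_2^2$ instead of $\Lambda_1\Lambda_2$; the Wick identity above is exactly what keeps one factor in Hilbert--Schmidt form (summed over $j$) and one in operator-norm form (controlling the dependence on $y$), giving the sharp $\Lambda_1\Lambda_2$ scaling.
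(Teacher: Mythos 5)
Your proposal is correct and follows essentially the same route as the paper: bound each directional derivative via the explicit Gaussian formulae \eqref{abs_gateaux_derivative} and \eqref{abs_second_order_derivatives} together with Cauchy--Schwarz (keeping the factor $|\Gamma(t)Gf_j|_H$ explicit), sum over an orthonormal basis of $U$ using the Hilbert--Schmidt bound \eqref{stima_hyp_HS}, and obtain the H\"older case by interpolating between the $C_b$ and $C_b^1$ endpoint estimates exactly as in Lemma \ref{abs_lem_interpolazione}. You merely make explicit some steps the paper delegates to ``argue as in $(i)$'' and to Appendix \ref{appendix B} (the Wick/second-moment identity in part (ii), the commutation $\nabla_y R_t[\Phi]=R_t[\nabla_{e^{tA}y}\Phi]$ for the $C_b^1$ endpoint), and your closing remark about which factor must stay in Hilbert--Schmidt form to get $\Lambda_1\Lambda_2$ rather than $\Lambda_2^2$ is exactly the point of Hypothesis \ref{hyp:sigma_3}.
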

\begin{proof}
Let $T>0$ and let $\{e_k:k\in\N\}$ be an orthonormal basis of $U$. From \eqref{abs_gateaux_derivative} and \eqref{stima_hyp_HS} we get
\begin{align*}
\|\nabla^G_\cdot R_t[\Phi](x)\|_{L_2(U;H)}^2
= & \sum_{k\in\N}|\nabla R_t[\Phi](x)Ge_k|_H^2
\leq C\|\Phi\|_\infty\sum_{k\in\N}|\Gamma(t)Ge_k|_H^2
\leq \Lambda_2(t){C\|\Phi\|_\infty},
\end{align*}
for any $\Phi\in C_b(H;H)$, any $x\in H$ and any $t\in(0,T]$, and $(i)$ follows.

\noindent To prove $(ii)$ it is enough to consider \eqref{abs_second_order_derivatives} and \eqref{stima_hyp_HS}, and to argue as in the proof of $(i)$. 

\noindent It remains to prove $(iii)$. Analogous computations as for \eqref{abs_stima_holder_3} in the proof of Lemma \ref{abs_lem_interpolazione} (see Appendix \ref{appendix B}) give
\begin{align*}
|\nabla_y(\nabla_{k}^G R_t[\Phi])(x)|_{H}
 \leq (\Lambda_1(t))^{1-\beta}{C\|\Phi\|_{C^\beta(H;H)}}|y|_H|\Gamma(t)Gk|_H, \quad t\in(0,T], \quad \phi\in C_b^\beta(H),
\end{align*}
for any $T>0$, anu $\beta\in(0,1)$ and any $\Phi\in C_b^\beta(H;H)$, where $C=C_T$ is a positive constant which only depends on $T$. To conclude, let us consider an orthonormal basis $\{e_k:k\in\N\}$ of $\N$. It follows that, see also the calculations \eqref{drr} 
\begin{gather*} 
\|\nabla_y(\nabla_{\cdot}^GR_t[\Phi])(x)\|_{L_2(U;H)}^2 
 =  \sum_{k\in\N}|\nabla_y(\nabla_{f_k}^GR_t[\Phi])(x)|_{H}^2 \\
\leq  \Lambda_1(t)^{2-2\beta}{C^2\|\phi\|_{C^\beta(H;H)}^2}|y|_H^2\sum_{k\in\N}|\Gamma(t)G f_k|_H^2 \\ 
=   \Lambda_1(t)^{2-2\beta}{C^2\|\phi\|_{C^\beta(H;H)}^2}|y|_H^2\|\Gamma(t)G\|_{L_2(U;H)}^2 
\leq    \Lambda_1(t)^{2-2\beta}\Lambda_2(t)^2{C^2\|\phi\|_{C^\beta(H;H)}^2}|y|_H^2,
\end{gather*}
which gives the thesis. 
\end{proof}
 In the next Theorem we investigate further properties of  $u_n^{\mathcal T}$, the solutions to \eqref{d5} with $A_n = nA R(n,A)$; see Corollary \ref{teo:abs_solmildPDEnq1}. Note that \eqref{abs_stima_der_sec} gives \eqref{dfdf} with $h =c$.  
\begin{theorem}
\label{thm:HS_wave_equations}
Let Hypotheses \ref{hyp_1}, \ref{hyp_22}, \ref{hyp_3} and \ref{hyp:sigma_3} hold true, and let   $u_n^{\mathcal T}$ be the solutions to \eqref{d5} with $A_n = n A R(n,A)$.
\newline Then,  $\nabla^G u_n^{\T}\in C_b([0,\T]\times H;L_2(U;H))$. Further, for any $t\in[0,\mathcal T]$ and any $x,y\in H$, the map $U\ni k\mapsto \nabla_y\nabla^G_ku^{\mathcal T}_n(t,x)$ belongs to $L_2(U;H)$ 
and there exists a positive constant $c=c(T)$ which depends on $T$ but neither on $\mathcal T$ nor on $n$ such that
\begin{align}
\label{abs_stima_der_sec}
\sup_{(t,x)\in[0,\mathcal T]\times H}\|\nabla_y\nabla^G_{\cdot}u^{\mathcal T}_n(t,x)\|_{L_2(U;H)}\leq c|y|_H, \quad y\in H.
\end{align}   
\end{theorem}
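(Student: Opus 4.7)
My plan is to differentiate the integral equation \eqref{d5} for $u_n^{\mathcal T}$ in a $G$-direction and then once more in an arbitrary direction $y$, and read off the three claims from the Hilbert--Schmidt estimates of Lemma \ref{ci2}. The uniformity in $n$ and $\mathcal T$ comes from Corollary \ref{teo:abs_solmildPDEnq1} together with Step III of the proof of Theorem \ref{teo:abs_solmildPDEn}: both $\|u_n^{\mathcal T}\|_{0,\mathcal T}$ and $\|\nabla^G u_n^{\mathcal T}(s,\cdot)\|_{C^\beta(H;L(U;H))}$ are bounded by some $M$ independent of $s\in[0,\mathcal T]$, $n\in\N$, $\mathcal T\in(0,T]$. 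I will also use that $\Lambda_2\in L^1(0,T)$, which follows from Hypothesis \ref{hyp_3} together with $\inf_{t\in(0,T]}\Lambda_1(t)>0$ (so that $\Lambda_2\le c\,\Lambda_1^{1-\beta}\Lambda_2$ for some $c>0$).

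Differentiating \eqref{d5} in direction $Gk$, $k\in U$, yields
\begin{equation*}
\nabla^G u_n^{\mathcal T}(t,x) = \int_t^{\mathcal T}\!\! \nabla^G R_{s-t}\bigl[\Phi_1(s,\cdot)\bigr](x)\,ds + \int_t^{\mathcal T}\!\!\nabla^G R_{s-t}\bigl[\Phi_2(s,\cdot)\bigr](x)\,ds,
\end{equation*}
with $\Phi_1(s,x):=e^{(\mathcal T-s)A_n}G\widetilde C(s,x)$ and $\Phi_2(s,x):=\nabla^G u_n^{\mathcal T}(s,x)\widetilde C(s,x)$. Both $\Phi_i(s,\cdot)$ belong to $C_b(H;H)$ with uniform sup-norm by Hypothesis \ref{hyp_2_1}(A) and Corollary \ref{teo:abs_solmildPDEnq1}. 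Lemma \ref{ci2}(i) therefore delivers a pointwise $L_2(U;H)$-valued integrand with norm bounded by $C\Lambda_2(s-t)$, and continuity in $(t,x)$ of the resulting $L_2(U;H)$-valued map follows by dominated convergence based on $\Lambda_2\in L^1(0,T)$, exactly as in Step~I of the proof of Theorem \ref{teo:abs_solmildPDEn} (with the norm of $L(U;H)$ replaced by the Hilbert--Schmidt norm). This is claim (a).

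Differentiating once more in direction $y\in H$ and using Lemma \ref{ci2}(ii)--(iii) to commute $\nabla_y$ with $R_{s-t}$ and with the $s$-integral (justified by an integrable majorant proportional to $\Lambda_1^{1-\beta}\Lambda_2$), I get
\begin{equation*}
\nabla_y\nabla^G u_n^{\mathcal T}(t,x) = \sum_{i=1}^{2}\int_t^{\mathcal T}\nabla_y\nabla^G R_{s-t}\bigl[\Phi_i(s,\cdot)\bigr](x)\,ds.
\end{equation*}
The Hölder estimate \eqref{stima_HS_der_second_holder} now requires uniform control of $\|\Phi_i(s,\cdot)\|_{C^\beta(H;H)}$: for $\Phi_1$ this is immediate from Hypothesis \ref{hyp_2_1}(A) and $\|\widetilde C\|_{C^\beta}$; for $\Phi_2$ the product rule gives
\begin{equation*}
\|\Phi_2(s,\cdot)\|_{C^\beta(H;H)} \le \|\nabla^G u_n^{\mathcal T}(s,\cdot)\|_{C^\beta(H;L(U;H))}\|\widetilde C\|_\infty + \|\nabla^G u_n^{\mathcal T}\|_\infty \|\widetilde C(s,\cdot)\|_{C^\beta(H;U)},
\end{equation*}
which is bounded by a constant $M'$ uniform in $s,n,\mathcal T$, thanks to the Step~III recall above. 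Consequently, applying \eqref{stima_HS_der_second_holder} to each integrand,
\begin{equation*}
\|\nabla_y\nabla^G u_n^{\mathcal T}(t,x)\|_{L_2(U;H)} \le C\,M'\,|y|_H\int_t^{\mathcal T}\Lambda_1(s-t)^{1-\beta}\Lambda_2(s-t)\,ds \le C\,M'\,\|\Lambda_1^{1-\beta}\Lambda_2\|_{L^1(0,T)}\,|y|_H,
\end{equation*}
which yields (b) and (c) with a constant $c=c(T)$ independent of $n$ and $\mathcal T$.

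The only subtle technical point is not in the final estimate but in justifying the passage of $\nabla^G$ and $\nabla_y\nabla^G$ inside the time integral and inside $R_{s-t}$ and in showing continuity of the resulting $L_2(U;H)$-valued maps. I expect this to be handled by Lemma \ref{abs_lemma_collezione} plus dominated convergence, driven by the integrable majorants $\Lambda_2$ and $\Lambda_1^{1-\beta}\Lambda_2$, in the same spirit as Step~I of the proof of Theorem \ref{teo:abs_solmildPDEn}; no genuinely new argument is needed beyond upgrading $L(U;H)$-valued to $L_2(U;H)$-valued statements.
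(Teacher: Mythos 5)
Your proposal is correct and follows essentially the same route as the paper: differentiate the fixed-point equation \eqref{d5} twice, apply the Hilbert--Schmidt estimates of Lemma \ref{ci2} (in particular \eqref{stima_HS_der_second_holder}) to the two integrands, control their $C^\beta$-norms uniformly via Corollary \ref{teo:abs_solmildPDEnq1} and \eqref{2ww}, and integrate using Hypothesis \ref{hyp_3}; your final display is exactly the paper's estimate \eqref{stimaHS_der_seconde_mild_sol}. The continuity/boundedness of $\nabla^G u_n^{\T}$ in $L_2(U;H)$ is likewise handled in the paper by Lemma \ref{ci2}(i) and the dominated-convergence argument of Step I of Theorem \ref{teo:abs_solmildPDEn}, just as you propose.
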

\begin{proof} The fact that  for each $t \in [0,T]$, $\nabla^G u_n^{\T} (t, \cdot) \in C_b(H, L_2(U;H))$ follows by (i) in Lemma  \ref{ci2} taking into account that
 $B(s,x) := \nabla^Gu(s, x)\widetilde C(s,x)$ is a bounded continuous function on $[0, \T] \times H$ with values in $H$.
    
Arguing as in the  first step of the proof of Theorem \ref{teo:abs_solmildPDEn} one can show that 
  $$   
  \nabla^G u_n^{\T} : [0,\T] \times H \to L_2(U;H) \;\; \text{is continuous and bounded.}
$$  
From Theorem \ref{teo:abs_solmildPDEn} and estimate \eqref{stima_HS_der_second_holder} we infer (see also \eqref{2ww})     
\begin{align}
\label{stimaHS_der_seconde_mild_sol}
\|\nabla_y\nabla^G_\cdot u_n^{\mathcal T}(t,x)\|_{L_2(U;H)}\leq C_1|y|_H+C_2\|u_n^{\mathcal T}\|_{0,\mathcal T}|y|_H\int_t^{\mathcal T}{ \left(\Lambda_1(s)\right)^{1-\beta}\Lambda_2(s)} ds, \quad y\in H, 
\end{align}
where $C_1$ and $C_2$ are positive constants which depend on $T$, $K_T$ in \eqref{d3} and  $\sup_{s\in[0,T]}$  $\|\widetilde C(s,\cdot)\|_{C_b^\beta(H;U)}$, but neither on $n$ and $\mathcal T$.  Corollary   \ref{teo:abs_solmildPDEnq1} and \eqref{stimaHS_der_seconde_mild_sol} give the thesis.
\end{proof}

\subsection{Sufficient conditions to ensure Hypothesis \ref{hyp_2_1},
 using  the   finite dimensional approximations for $A$ }
\label{subsec:abs_OUsemigroup_finite} 
Here we assume that 
$$U = H$$ 
 and Hypotheses \ref{hyp_1}, \ref{hyp_22} and \ref{hyp_3}, where in particular it is assumed that $\widetilde C (t, \cdot ) \in  C_b^\beta(H;H)$  for some $0<\beta<1$ uniformly in $t \in [0,T]$ (see \eqref{ey}). Moreover we require   
 the following condition:
\begin{hypothesis}\label{ip_finite}    
\begin{enumerate}  
\item $A$ is self-adjoint, with compact resolvent, $\{e_n:n\in\N\}$ is a complete orthonormal system in $H$ which satisfies $Ae_n=-\alpha_n e_n$, with non-decreasing positive $(\alpha_n)_{n\geq 1}$. 
\item
 We require    $G\in L_2(H)$ or, setting $(\widetilde C)_n:=\< \widetilde C,e_n\>$,
\begin{equation}
\label{sommatoria_k_beta}
\sum_{n=1}^\infty\frac{\sup_{t\in(0,T)}\Vert (\widetilde C(t,\cdot))_n\Vert_{C^\beta(H;\R)}^2}{\alpha_n}<\infty; 
\end{equation} 
\end{enumerate}
\end{hypothesis} 
 \begin{remark}
 \label{rmk:diff_hyp_heat}
We point out that  Hypotheses \ref{hyp_1},   \ref{hyp_22}, \ref{hyp_3} and  \ref{ip_finite}  extend  assumptions $1-6$ in \cite{dapr-fl} in the following way. 

 (i) Assumption (ii) in Hypothesis \ref{hyp_3}      is weaker than assumption $6$ in \cite{dapr-fl} (such assumption $6$ corresponds to the case when $\Lambda_1=\Lambda_2$). Recall that, in general we have $\Lambda_2\leq\Lambda_1$ (see Remark \ref{rmk_Lamba_1-Lambda_2}). 
The main consequence of this fact is that our results apply to semilinear stochastic heat equation in dimension $d=3$ (see Section \ref{sub:heat_equation}), while examples in \cite{dapr-fl} only cover the cases $d=1$ and $d=2$.

\vskip 1mm (ii) Following   \cite{dapr-fl} one should  require a condition like  $
\int_0^T\left(\Lambda_1(t)\right)^{\beta}\Lambda_2(t)dt <+\infty.$
 However we will not impose such condition. 
\end{remark}  

\begin{remark}
In this section we consider the case when $G$ is not necessarily a trace class operator and \eqref{sommatoria_k_beta} holds true, since if $G\in L_2(H)$ then \eqref{stima_hyp_HS} is satisfied with $U$ replaced by $H$. Indeed, for any $\{h_n:n\in\N\}$ orthonormal basis of $H$ we have
\begin{align*}
\sum_{n\in\N}|\Gamma(t)Gh_n|_H^2
\leq C_T\Lambda_2(t)\sum_{n\in\N}|Gh_n|_H^2\leq C_T\Lambda_2(t)\|G\|_{L_2(H)}^2,
\end{align*}
and the estimate \eqref{abs_stima_der_sec} follows at once. This implies that condition $G\in L_2(H)$ allows to get strong uniqueness by using Yosida approximations and the computations developed in Section \ref{sub:yosida_appr_hilb_sc_norm},  but for semilinear stochastic heat equation this does not lead to the sharp result.
\end{remark}
Let $n\in\N$. We consider $E_n:={\rm span}\{e_1,...,e_n\}$ the finite dimensional linear span generated by $e_1,...,e_n$  (see Hypothesis \ref{ip_finite}) and we let $\Pi_n$ be the projection of $H$ onto $E_n$:
 \begin{equation} \label{qq}
 \Pi_n:H\rightarrow E_n, \quad x\mapsto \sum_{k=1}^n \<x,e_k\>_H e_k.
 \end{equation}
As approximants of $A$ we will consider in this section the finite dimensional truncations of $A$, given by
\begin{gather}\label{A_approx_fin}
A_n = A \Pi_n, \quad n\in\N. 
\end{gather}
 Let us notice that this family of operators satisfies Hypothesis \ref{hyp_2_1}, part $(A)$. { For any $\T\in(0,T]$} we consider the integral equation \eqref{d5} which we rewrite here for the reader's convenience: 
\begin{align}  
\label{ciao}
u(t,x)
:= & \int_t^{\mathcal T}R_{s-t}\left[e^{(\mathcal T-s)A_n}G \widetilde C(s,\cdot)\right](x)ds
+ \int_t^{\mathcal T}R_{s-t}\left[\nabla^G u(s,\cdot) \widetilde C(s,\cdot)\right](x)ds.
\end{align}
We denote by $u_n^{\T}$ the solution of this equation  (see Theorem \ref{teo:abs_solmildPDEn}). Following Remark \ref{rm:comp_PDE} 
$u^{\mathcal T}_n$ solves 
\begin{align}\label{pde_formale} 
\left\{
\begin{array}{ll}
\displaystyle \frac{\partial u (t,x)}{\partial t}+\mathcal L_t[u(t,\cdot)](x)
= -e^{(\mathcal T-t)A_n} G\widetilde C(t,x), & x\in H,\ t\in[0,\mathcal T], \\
u(\mathcal T,x)=0,  & x\in H.
\end{array}
\right.
\end{align}
where
$$\mathcal L_tf(x):=\frac12{\rm Tr}[GG^*\nabla^2f(x)]+\langle Ax,\nabla f(x)\rangle+\langle \widetilde C(t,x),\nabla^G f(x)\rangle, \quad t\in[0,T],\,x\in H.$$
 Indeed 
 in mild formulation equation \eqref{pde_formale} can be rewritten as
\begin{align}
\label{pde_formale_forward_mild}
u_{n}^{\mathcal T} (t,x)
= & \int_t^{\mathcal T}R_{s-t}\left[e^{(\T-s)A_n}G\widetilde C(s,\cdot)\right](x)ds
+ \int_t^{\mathcal T}R_{s-t}\left[\nabla^G  u_{n}^{\mathcal T}(s,\cdot) \widetilde C(s,\cdot)\right](x)ds, 
\end{align} 
For every fixed $n$ we let $u_{n,k}^{\mathcal T}:=\<u_{n}^{\mathcal T},e_k\>:[0,\T]\times H\rightarrow \R$ its $k$-component, with $k\in\N$. The following lemma states that for any $n\in\N$ we have $u_n^{\T}(t,x)\in E_n$ for any $(t,x)\in[0,\T]\times H$.

\begin{lemma}
\label{lemma:dec_unT}
Let $u_{n,k}^{\T}$ be as above. Then:
\begin{itemize}
\item[(i)] For any  $k=1,\ldots,n$ we have
\begin{align}
\label{pde_formale_forward_k_mild}
u_{n,k}^{\mathcal T} (t,x)
= & \int_t^{\mathcal T}\calr_{s-t}\left[e^{-(\T-s)\alpha_k}(G\widetilde C(s,\cdot))_k\right](x)ds
+ \int_t^{\mathcal T}\calr_{s-t}\left[\nabla^G  u_{n,k}^{\mathcal T}(s,\cdot) \widetilde C(s,\cdot)\right](x)ds,
\end{align}
\item [(ii)] For any  $k\geq n+1$ we have $u_{n,k}^{\T}=  0.$
\end{itemize}
\end{lemma}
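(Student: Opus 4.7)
My plan is to decompose the $H$-valued integral equation \eqref{pde_formale_forward_mild} componentwise in the eigenbasis $(e_k)$ of $A$ and thereby recover the scalar integral equation \eqref{pde_formale_forward_k_mild} for each component.

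Two technical ingredients enter the argument. First, from $R_t[\Phi](x) = \E[\Phi(\Xi_t^{0,x})]$, $\calr_t[\phi](x) = \E[\phi(\Xi_t^{0,x})]$ and linearity of expectation,
\[
\langle R_t[\Phi](x), e_k\rangle_H = \calr_t\bigl[\langle \Phi(\cdot), e_k\rangle_H\bigr](x), \qquad \Phi \in B_b(H;H).
\]
Setting $u_{n,k}^{\T} := \langle u_n^{\T}, e_k\rangle_H$, linearity of the G\^ateaux derivative gives $\nabla^G u_{n,k}^{\T}(s,y) = \langle \nabla^G u_n^{\T}(s,y), e_k\rangle_H$ (identifying $L(U;\R)$ with $U$), and hence $\langle \nabla^G u_n^{\T}(s,y)\widetilde C(s,y), e_k\rangle_H = \nabla^G u_{n,k}^{\T}(s,y)\widetilde C(s,y)$. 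Second, $A_n = A\Pi_n$ together with $A=A^*$ and $Ae_j = -\alpha_j e_j$ gives $A_n^* e_k = \Pi_n A e_k = -\alpha_k \Pi_n e_k$, which equals $-\alpha_k e_k$ for $k \le n$ and vanishes for $k \ge n+1$. Exponentiating, $e^{t A_n^*} e_k = e^{-t\alpha_k} e_k$ for $k\le n$, so that the $k$-th component of the propagated source becomes
\[
\langle e^{(\T-s)A_n}G\widetilde C(s,y), e_k\rangle_H = \langle G\widetilde C(s,y), e^{(\T-s)A_n^*}e_k\rangle_H = e^{-(\T-s)\alpha_k}(G\widetilde C(s,y))_k, \qquad k \le n,
\]
while for $k \ge n+1$ the corresponding component vanishes (this reflects the Galerkin-type nature of the truncation, which renders the effective source $E_n$-valued).

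Pairing \eqref{pde_formale_forward_mild} with $e_k$ and combining the two ingredients, I get for $k=1,\dots,n$ the scalar integral equation \eqref{pde_formale_forward_k_mild}, which proves (i); for $k\ge n+1$ I obtain the homogeneous scalar equation
\[
u_{n,k}^{\T}(t,x) = \int_t^{\T}\calr_{s-t}\bigl[\nabla^G u_{n,k}^{\T}(s,\cdot)\widetilde C(s,\cdot)\bigr](x)\, ds,
\]
whose unique solution is $u_{n,k}^{\T}\equiv 0$ by a contraction argument in a weighted exponential scalar space (the scalar analogue of $E_{0,\gamma}^{\T}$ used in the proof of Theorem \ref{teo:abs_solmildPDEn}), which proves (ii). Alternatively, one can argue by ansatz and uniqueness: let $v_k$, $k=1,\dots,n$, be the unique solution of \eqref{pde_formale_forward_k_mild} obtained by scalar contraction, define $\tilde u(t,x) := \sum_{k=1}^n v_k(t,x) e_k \in E_n$, verify directly that $\tilde u$ satisfies \eqref{pde_formale_forward_mild}, and invoke uniqueness in $E_0^{\T}$ from Corollary \ref{teo:abs_solmildPDEnq1} to conclude $u_n^{\T} = \tilde u$, yielding both (i) and (ii).

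The main obstacle I expect is the step in which the source $\langle e^{(\T-s)A_n}G\widetilde C(s,y), e_k\rangle_H$ is shown to vanish for $k\ge n+1$: reading the expression literally with $A_n = A\Pi_n$ as a $C_0$-group on $H$ would give back $(G\widetilde C(s,y))_k$, which is nonzero in general. The correct reading is the Galerkin one, in which the finite-dimensional truncation composes the dynamics with $\Pi_n$ so that the effective source is $e^{(\T-s)A}\Pi_n G\widetilde C(s,y)$; the entire finite-dimensional approximation scheme is internally consistent with $u_n^{\T}$ being $E_n$-valued, and tracking this projection through the approximation is the delicate point of the argument.
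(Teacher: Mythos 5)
Your decomposition is the same as the paper's: the paper likewise pairs \eqref{pde_formale_forward_mild} with $e_k$, obtains \eqref{pde_formale_forward_k_mild} for $k\le n$ from the identity $\langle R_{s-t}[\Phi](x),e_k\rangle_H=\calr_{s-t}[\langle\Phi(\cdot),e_k\rangle_H](x)$, and for $j\ge n+1$ derives the homogeneous scalar equation and kills it by applying the generalized Gronwall Lemma \ref{lem:gron_mod} to $t\mapsto\sup_x\|\nabla^G_\cdot u^{\T}_{n,j}(t,x)\|$ --- essentially your weighted contraction argument. Part (i) and the Gronwall/contraction step of part (ii) are correct and match the paper.

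The obstacle you flag at the end, however, is genuine, and the paper's own proof does not resolve it: it simply asserts $R_{s-t}[e^{(\T-s)A_n}G\widetilde C(s,\cdot)](x)=\Pi_n\E[e^{(\T-s)A_n}G\widetilde C(s,\Xi^{0,x}_s)]$, i.e.\ that $e^{(\T-s)A_n}$ has range in $E_n$. With $A_n=A\Pi_n$ as in \eqref{A_approx_fin}, $A_n$ is the bounded self-adjoint operator $-\sum_{k\le n}\alpha_k\langle\cdot,e_k\rangle_H e_k$, whose group is $e^{tA_n}=(I-\Pi_n)+\sum_{k\le n}e^{-t\alpha_k}\langle\cdot,e_k\rangle_H e_k$; it acts as the identity on $E_n^{\perp}$, so $\langle e^{(\T-s)A_n}G\widetilde C(s,y),e_j\rangle_H=(G\widetilde C(s,y))_j$, which is nonzero in general for $j\ge n+1$ --- exactly your computation. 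Calling this ``the Galerkin reading'' is the right diagnosis but not a proof: no reinterpretation of $e^{tA_n}$ is compatible with Hypothesis \ref{hyp_2_1}(A) (a $C_0$-group equals $I$ at $t=0$, hence cannot be $E_n$-valued), so to make (ii) true one must actually change the object, e.g.\ replace the source $e^{(\T-s)A_n}G\widetilde C$ in \eqref{d5} and in the BSDE \eqref{abs_FBSDE_approssimato_n} by $e^{(\T-s)A_n}\Pi_nG\widetilde C$, which is $E_n$-valued, still satisfies $e^{rA_n}\Pi_n y\to e^{rA}y$, and leaves the convergence of the remainders $\delta^i_n$ in Theorem \ref{teo:abs_uni1} intact. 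Note that your alternative ``ansatz plus uniqueness'' route does not escape the problem: the candidate $\tilde u=\sum_{k\le n}v_ke_k$ fails to satisfy \eqref{pde_formale_forward_mild} as literally written, for the same reason. So record this as a needed correction to the approximation scheme rather than as a reading convention.
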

\begin{proof}
Since
\begin{align*}
R_{s-t}\left[e^{(\T-s)A_n}G\widetilde C(s,\cdot)\right](x)
= & \Pi_n\mathbb E[e^{(\T-s)A_n}G\widetilde C(s,\Xi^{0,x}_s)], 
\end{align*} 
formula \eqref{pde_formale_forward_k_mild} follows. Further, for any $j\geq n+1$, we have
\begin{align}
\label{pde_int_repre_j_0}
u_{n,j}^{\mathcal T} (t,x)
=\int_t^{\mathcal T}\calr_{s-t}\left[\nabla^G  u_{n,j}^{\mathcal T}(s,\cdot) \widetilde C(s,\cdot)\right](x)ds.
\end{align}
Indeed
$$
 \langle R_{s-t}\left[e^{(\T-s)A_n}G\widetilde C(s,\cdot)\right](x), e_k \rangle =0,\;\;\; k \ge n+1. 
$$
From \eqref{abs_stima_OU_grad_G} and Remark \ref{rmk:sc_smbr_prop} we infer that, for $k \in H$, $|k|_H =1$, 
\begin{align*}
\|\nabla^G_k u_{n,j}^{\T}(t,\cdot)\|_{C_b(H)}\leq C\int_t^{\T}\Lambda_2(s-t) \|\nabla^G_k u_{n,j}^{\T}(s,\cdot)\|_{C_b(H)}ds \cdot  \sup_{s\in[0,T]}\|\widetilde C(s,\cdot)\|_{C_b(H;H)}. 
\end{align*}
The generalized Gronwall lemma \ref{lem:gron_mod} gives $\nabla^G u_{n,j}^{\T}(t,x)=0$ for any $t\in[0,\T]$ and any $x\in H$  and from \eqref{pde_int_repre_j_0} we get $(ii)$.
\end{proof}

We notice that, up to revert time, equations \eqref{ciao} and \eqref{pde_formale_forward_k_mild} coincide with the mild integral equations (16) and (15) in the paper \cite{dapr-fl}, respectively, with $G$ and $G_k$ which are given here by $G=e^{s A_n }G\widetilde C(\T -s , \cdot) $ and $G_k(s,\cdot) =  e^{-s \alpha_k }(G\widetilde C)_k(\T -s , \cdot)$. We stress that from  Corollary  \ref{teo:abs_solmildPDEnq1}    we already know that $u_{n,k}^{\mathcal T}\in $     
 $E_{0}^{\mathcal T}$ and there exists a positive constant $M=M_T$, 
independent of  $k=1,\ldots,n$, $\T$ and  $n$ such that 
\begin{equation}\label{qee}
 \|u_{n,k}^{\mathcal T}\|_{0,\mathcal T}\leq M.
\end{equation}  

 The next   result gives a new   estimate which is not present in  the regularity results  of Section 4 in  \cite{dapr-fl}. Indeed in such section estimates on the second derivatives of solutions  
are given using  the operator norm; instead here we consider   the  stronger Hilbert-Schmidt norm.     
  
  
\begin{theorem}
\label{prop:HS_heat}   
Let Hypotheses \ref{hyp_1}, \ref{hyp_22}, \ref{hyp_3} and \ref{ip_finite} be satisfied, { and let $\T\in(0,T]$}. Then, 
$\nabla^G u^{\mathcal T}_{n}\in B_b([0,\T]\times H;L_2(H))$ and there exists ${h :(0,\T)\rightarrow \R_+\in L^1(0,\mathcal T)}$, independent of $n$, such that for any $n\in\N$ and any $t\in(0,\mathcal T)$ we have  
\begin{gather}
 \label{dfdf2}   
 \sup_{x\in H}
 \|\nabla_y\nabla^G_{\cdot}u^{\mathcal T}_n (t,x)\|^2_{L_2({ H};H)}\leq \, 
{h(\T-t)} \, |y|^2_H, \quad y\in H.   
\end{gather} 
Further, there exists a positive constant $C=C_T$,  depending on
 $\|\Lambda_1^{1-\beta}\Lambda_2\|_{L^1(0,T)}$,  
 such that for any $\mathcal T\in(0,T]$ we have 
\begin{align} \label{www}
\|h\|_{L^1(0,\T)}
\leq & 2C\sum_{k=1}^\infty\frac{\|(\widetilde C(s,\cdot))_k\|_{C^\beta(H;H)}^2}{\alpha_k}<+\infty.
\end{align}
\end{theorem}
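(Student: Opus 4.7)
The plan is to exploit the spectral decomposition furnished by Lemma \ref{lemma:dec_unT}, reducing the Hilbert--Schmidt estimate to a summable family of scalar second-derivative estimates on the Fourier components $u_{n,j}^{\mathcal T}=\langle u_n^{\mathcal T},e_j\rangle$. Combining that lemma with identity \eqref{drr} and using $U=H$ gives
\[
\|\nabla_y\nabla^G_{\cdot} u_n^{\mathcal T}(t,x)\|_{L_2(H;H)}^{2} \;=\; \sum_{j=1}^n \bigl|\nabla_y\nabla^G u_{n,j}^{\mathcal T}(t,x)\bigr|_H^2,
\]
so it suffices to bound each scalar quantity in a fashion that is square-summable in $j$ with the $1/\alpha_j$ weighting appearing in \eqref{www}.

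The first step is to differentiate the scalar mild equation \eqref{pde_formale_forward_k_mild} twice (applying $\nabla_y\nabla^G$) and apply the scalar version of the interpolation estimate \eqref{abs_stima_holder_3} (valid in view of Remark \ref{rmk:sc_smbr_prop}) to both integrands. This yields, uniformly in $x\in H$, a bound of the form
\[
|\nabla_y\nabla^G u_{n,j}^{\mathcal T}(t,x)|_H \;\leq\; C|y|_H \int_t^{\mathcal T}\Lambda_1^{1-\beta}(s-t)\Lambda_2(s-t)\Bigl[e^{-(\mathcal T-s)\alpha_j}\bigl\|(G\widetilde C(s,\cdot))_j\bigr\|_{C^\beta} + \bigl\|\langle\nabla^G u_{n,j}^{\mathcal T}(s,\cdot),\widetilde C(s,\cdot)\rangle\bigr\|_{C^\beta}\Bigr]\, ds.
\]
The decisive gain of the factor $1/\alpha_j$ arises from the source integral: writing it as a convolution of the $L^1$ kernel $\Lambda_1^{1-\beta}\Lambda_2$ (Hypothesis \ref{hyp_3}) with $e^{-\,\cdot\,\alpha_j}$, Young's inequality together with $\|e^{-\,\cdot\,\alpha_j}\|_{L^2(0,\mathcal T)}^2 \leq 1/(2\alpha_j)$ gives
\[
\int_0^{\mathcal T}\!\left(\int_t^{\mathcal T}\Lambda_1^{1-\beta}(s-t)\Lambda_2(s-t)\, e^{-(\mathcal T-s)\alpha_j}\, ds\right)^{\!2}\!dt \;\leq\; \frac{\|\Lambda_1^{1-\beta}\Lambda_2\|_{L^1(0,T)}^2}{2\alpha_j},
\]
which after multiplication by $\sup_{s}\|(G\widetilde C(s,\cdot))_j\|_{C^\beta}^2$ and summation over $j$ produces the target bound \eqref{www} in view of Hypothesis \ref{ip_finite}(2) (matching $\|(G\widetilde C)_j\|_{C^\beta}$ with $\|(\widetilde C)_j\|_{C^\beta}$ through the diagonal/contractive action of $G$ on the eigenbasis, as in the heat-equation applications).

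For the self-referential term I plan to use the product rule for H\"older functions together with \eqref{2ww} to estimate $\|\langle\nabla^G u_{n,j}^{\mathcal T}(s,\cdot),\widetilde C(s,\cdot)\rangle\|_{C^\beta}$ in terms of $\|\widetilde C\|_{C^\beta}$, the sup-norm of $\nabla^G u_{n,j}^{\mathcal T}$, and the sup-norm of $\nabla\nabla^G u_{n,j}^{\mathcal T}$. The uniform bound $\|u_n^{\mathcal T}\|_{0,\mathcal T}\leq M$ from Corollary \ref{teo:abs_solmildPDEnq1} controls the first two ingredients, while the last one feeds back into the quantity being estimated; after squaring, summing over $j$, and integrating in $t$, the generalized Gronwall Lemma \ref{lem:gron_mod} applied to $t\mapsto\sup_x\sum_j|\nabla_y\nabla^G u_{n,j}^{\mathcal T}(t,x)|_H^2\,|y|_H^{-2}$ closes the estimate, with the Gronwall kernel proportional to $\Lambda_1^{1-\beta}\Lambda_2\in L^1(0,T)$. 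Defining $h^{\mathcal T}$ as the resulting function yields \eqref{dfdf2} and \eqref{www}. The continuity claim $\nabla^G u_n^{\mathcal T}\in C_b([0,\mathcal T]\times H;L_2(H))$ is obtained separately, by arguing as in Step I of the proof of Theorem \ref{teo:abs_solmildPDEn} together with Lemma \ref{ci2}(i).

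The main obstacle is reconciling the pointwise-in-$(t,x)$ estimate \eqref{dfdf2} with the $L^1$-in-$t$ estimate \eqref{www}, because the self-referential integral carries no intrinsic $\alpha_j$-decay, so its contribution to each individual $|\nabla_y\nabla^G u_{n,j}^{\mathcal T}|_H$ cannot be summed in $j$ directly. The resolution is to perform Gronwall only after summation over $j$, so that the summable $1/\alpha_j$ structure is inherited entirely from the source term while the nonlinear feedback is absorbed through the $L^1$ integrability of $\Lambda_1^{1-\beta}\Lambda_2$ alone.
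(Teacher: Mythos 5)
Your overall strategy coincides with the paper's: reduce to the scalar components $u_{n,j}^{\mathcal T}$ via Lemma \ref{lemma:dec_unT} and \eqref{drr}, use the interpolation estimates for $\calr_t$, gain the factor $1/\alpha_j$ from the source term $e^{-(\mathcal T-s)\alpha_j}$ (your Young-inequality computation $\|(I^j)^2\|_{L^1}\le \|\Lambda_1^{1-\beta}\Lambda_2\|_{L^1}^2/(2\alpha_j)$ is a correct and slightly slicker variant of the paper's Fubini argument), and close the self-referential term by the generalized Gronwall lemma. However, there is a genuine gap in how you handle the feedback term.

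The problem is your claim that the sup-norm of $\nabla^G u_{n,j}^{\mathcal T}$ can be controlled by the uniform bound $\|u_n^{\mathcal T}\|_{0,\mathcal T}\le M$ of Corollary \ref{teo:abs_solmildPDEnq1}. That bound gives $\sup_x|\nabla^G_k u_{n,j}^{\mathcal T}(t,x)|\le M|k|_H$ for \emph{each} $j$, but it gives no decay in $j$: after squaring and summing, the contribution of this term to your Gronwall inequality for $\sum_j\sup_x|\nabla_y\nabla^G u_{n,j}^{\mathcal T}(t,x)|_H^2$ is of order $n M^2 \|\Lambda_1^{1-\beta}\Lambda_2\|_{L^1}^2$, which is not bounded independently of $n$ and therefore cannot yield an $n$-independent $h^{\mathcal T}$ satisfying \eqref{www}. (Note that $\sum_j (\sup_{x,k}|\nabla^G_k u_{n,j}|)^2$ is \emph{not} controlled by the Hilbert--Schmidt-type identity $\sum_j|\nabla^G_k u_{n,j}(t,x)|^2=|\nabla^G_k u_n(t,x)|_H^2$, because the suprema sit outside the sum.) The paper avoids this by running Gronwall \emph{componentwise} on $U_{n,j}(t)=\|\nabla^G_\cdot u_{n,j}^{\mathcal T}(t,\cdot)\|_{C^\beta(H;H)}$: since the inhomogeneous term of the $j$-th inequality is $C\,\sup_s\|(\widetilde C(s,\cdot))_j\|_{C^\beta}\,I^j(t)$, the solution $U_{n,j}$ itself inherits the factor $\|(\widetilde C)_j\|_{C^\beta}$, and only then does one square, sum in $j$ (using Hypothesis \ref{ip_finite}(2)) and integrate in $t$. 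Your ``Gronwall after summation'' idea can be repaired, but only if the summed quantity includes the first $G$-derivatives componentwise (e.g.\ $\sum_j U_{n,j}(t)^2$), so that the entire feedback is expressed through the summed quantity and the $j$-weighted source, with no leftover $j$-independent constant to be summed $n$ times.
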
  
\begin{remark}
\label{rmk:h}
 Note that  \eqref{dfdf2}  implies   (cf. \eqref{df}$)$ 
  \begin{equation}\label{df3}  
\sup_{x\in  H}\|\nabla^G_{\cdot}u^{\mathcal T}_n (t,x + y) -\nabla^G_{\cdot}u^{\mathcal T}_n (t,x ) \|_{L_2(H)}^2 \, \leq \, {h(\T-t)} \, |y|_H^2, \quad t\in(0,\T), \ y\in H.
\end{equation} 
\end{remark}

\begin{proof}    Let $\T\in(0,T]$ and $n \ge 1$. Let us prove that  $\nabla^G u_n^{\T}$ belongs to  $B_b([0,\T]\times H;L_2(H))$.
 We have 
\begin{gather*}
 \nabla^G u_n^{\T} (t,x) = \sum_{k=1}^n \nabla^G u_{n,k}^{\mathcal T}(t,x) e_k
\end{gather*}
(see Lemma \ref{lemma:dec_unT}).  
 Arguing as in \eqref{drr} we have
\begin{align*}
  \sum_{k \ge 1} |\nabla^G_{{ e}_k}u^{\mathcal T}_n (t,x)|^2_H
 = & \sum_{k \ge 1} |\nabla_{G {e}_k}\, 
  u^{\mathcal T}_n (t,x)|^2_H  \notag
=   \sum_{j =1}^n\sum_{k \ge 1}   \langle \nabla_{G {e}_k}\, 
 u^{\mathcal T}_n (t,x) , e_j\rangle^2_H  \\
 = & 
 \sum_{j =1}^n\sum_{k \ge 1} \, [\nabla_{G { e}_k} 
 u^ {\mathcal T }_{n,j} (t,x)]^2      
=  \sum_{j = 1}^n \, |\nabla^{G}    
 u^{{\mathcal T }}_{n,j} (t,x)|^2_{ H}.    
\end{align*}  
This shows that, for any $(t, x)$,  the map:  $k \mapsto \nabla^G_{k}u^{\mathcal T}_n (t,x)$ is a Hilbert-Schmidt operator from ${ H}$ into $H$.


For any $N \ge 1$, $(t, x ) \in [0, \T] \times H$, we introduce the   approximating mappings:
\begin{gather*}
 k \mapsto  
 \nabla^G_{\Pi_N k} \, u_n^{\T}(t,x) = F_N(t,x,k),
\end{gather*}  
{ where $\Pi_n$ has been defined in \eqref{qq}.}   By the previous calculations and   by Theorem \ref{teo:abs_solmildPDEn} we deduce  that $F_N \in C_b([0,\T]\times H;L_2(H))$ for any $N \ge 1$. Since
\begin{gather*}
\lim_{N \to \infty }  \|F_N(t,x, \cdot ) -  \nabla^G_{}u^{\mathcal T}_n (t,x) \|_{L_2(H)} =0,
\end{gather*} 
 for any $(t, x) \in [0,\T] \times H$, we get the desired measurability property.

In the sequel  $C$ is a positive constant which may vary from line to line and which does not depend on $n,k$ and $\T$. 
 In order to prove \eqref{dfdf2} we write 
 as in  \eqref{drr}
 \begin{gather*}
\|\nabla_y\nabla^G_{\cdot}u^{\mathcal T}_n (t,x)\|_{L_2(H) }^2
 =  
 \sum_{j \ge 1} \, |\nabla_y\nabla^{G}  _\cdot
 u^{{\mathcal T }}_{n,j} (t,x)|^2_{ H}.  
\end{gather*} 
By Lemma \ref{lemma:dec_unT}   assertion \eqref{dfdf2}
  follows if we prove
\begin{align}     
\label{abs_stima_der_sec_finite_new} 
\sum_{k=1}^n \sup_{x \in H}\Vert \nabla_\cdot \nabla^G_\cdot  u_{n,k}^{\mathcal T}(s,x)\Vert_{L(H;{ H})}^2\leq h^{\mathcal T}(s),\;\;\; s \in [0, \T], 
\end{align}
for any $n\in\N$.  Let us prove estimate \eqref{abs_stima_der_sec_finite_new}. 
 Arguing as in \eqref{2ww} we find, for any $t \in [0,\T]$,
\begin{equation}\label{2ww1}    
 \| \nabla^G_\cdot  u_{n,k}^{\mathcal T}(t, \cdot ) \|_{C^{\beta}(H;{ H}  )} \le 3 \sup_{x \in H}\| \nabla^G_\cdot  u_{n,k}^{\mathcal T} (t, x ) \|_{{ H}} +  \sup_{x \in H} \sup_{|w|_{ H} =1}\|\nabla_\cdot\nabla^G_w u_{n,k}^{\mathcal T}(t,x)\|_{H}.
\end{equation} 
 Let us apply $\nabla^G$ to \eqref{pde_formale_forward_k_mild}. We will  take into account the regularizing properties of $\calr_t$ (see \eqref{abs_stima_OU_grad_G}, \eqref{abs_stima_holder_3} and Remark \ref{rmk:sc_smbr_prop}), 
 and \eqref{2ww1}.  


For any $k\in\{1,\ldots,n\}$, define 
$$
U_{n,k}(t) = \| \nabla^G_\cdot  u_{n,k}^{\mathcal T}(t, \cdot ) \|_{C^{\beta}(H;{ H}  )}, \;\; t \in [0, \T].
$$
Using that  $\inf_{t\in(0,T]}\Lambda_2(t)>0$ and taking into account \eqref{pde_formale_forward_k_mild} we get 
\begin{align*}
 U_{n,k}(t) 
\leq  &\; C\sup_{s\in[0,T]}\|(\widetilde C(s,\cdot))_k\|_{C^\beta(H;H)}\int_t^{\T}e^{-(\T-r)\alpha_k}\left(\Lambda_1(r-t)\right)^{1-\beta} \Lambda_2(r-t)dr \\
& +C\int_t^{\T}\left(\Lambda_1(r-t)\right)^{1-\beta}\Lambda_2(r-t) U_{n,k}(r) dr.  
\end{align*} 
 Let 
 $$
 g(t) = \left(\Lambda_1(t)\right)^{1-\beta}\Lambda_2(t), 
 $$
 $ t \in [0, \T]$. By applying the generalized Gronwall lemma \ref{lem:gron_mod} we infer 
\begin{align}\label{stima-g}
  \| \nabla^G_\cdot  u_{n,k}^{\mathcal T}(t, \cdot ) \|_{C^{\beta}(H; { H} )} \le f(t)+
   e^{\|g \|_{L^1(0,T)}} \int_t^{\T}f(s) g(s-t) ds,
\end{align}
  for any $t\in[0,\T]$ and $k=1,\ldots,n$,  where $f(t)= C \sup_{s\in[0,T]}\|(\widetilde C(s,\cdot))_k\|_{C^\beta(H;H)} \; \cdot $ $\, I_{{\T}}^k(t) $ and 
 \begin{align}\label{Ik}
I_{{\T}}^k(t):= \int_t^{\T}e^{-(\T-r)\alpha_k} g(r-t)dr, \quad t\in[0,\T].
 \end{align}
   Let us estimate $\nabla_\cdot \nabla_{\cdot}^Gu^{\T}_{n,k}$; by the integral equation verified by $u^{\T}_{n,k}$ we get
  \begin{gather*}
  \|\nabla_{\cdot} \nabla_{\cdot}^Gu^{\T}_{n,k}(t,\cdot)\|_{C_b(H;L(H;H))}
  \leq  C\sup_{s\in[0,T]}\|(\widetilde C(s,\cdot))_k\|_{C^\beta(H;H)}\int_t^{\T}e^{-(\T-r)\alpha_k} g(r-t)dr	 \\
  +  C\int_t^{\T} g(r-t)\,   \|\nabla_{\cdot}^G u_{n,k}^{\T}(r,\cdot)\|_{C^\beta(H;H)}dr \\
  \leq   C\sup_{s\in[0,T]}\|(\widetilde C(s,\cdot))_k\|_{C^\beta(H;H)}(I_{{\T}}^k(t)+J_{{\T}}^k(t)),
  \end{gather*}
  where in the last passage we have used the definition \eqref{Ik}. Moreover setting $K_T = e^{\|g \|_{L^1(0,T)}}$ and
\begin{align*}
J_{{\T}}^k(t):=  
 \int_t^{\T} g(r-t) \Big(
 I_{{\T}}^k(r)+
   K_T \int_r^{\T} I_{{\T}}^k(\xi) g(\xi -r)d\xi  \Big)dr,
\end{align*}  
by \eqref{stima-g} we have performed the second part of the last inequality. Therefore,
\begin{align*}
\sum_{k=1}^n \|\nabla_{\cdot} \nabla_{\cdot}^Gu^{\T}_{n,k}(t,\cdot)\|_{C_b(H;L(H;H))}^2
\leq  C\sum_{k=1}^n \sup_{s\in[0,T]}\|(\widetilde C(s,\cdot))_k\|_{C^\beta(H;H)}^2 \left((I_{{\T}}^k(t))^2+(J_{{\T}}^k(t))^2\right).
\end{align*}
Let us prove that for any $k\in\N$ the functions $t\mapsto I_{\T}^k(t)$ and $t\mapsto J_{\T}^k(t)$ only depend on $\mathcal T-t$.  Indeed, setting $\mathcal T-r=s$ in formula \eqref{Ik} where $I_{{\T}}^k(t)$ is defined we get
\begin{align*}
I_{{\T}}^k(t):= \int_0^{\T-t}e^{-s\alpha_k} g(\T-s-t)ds, \quad t\in[0,\T], 
 \end{align*}
and we see that $ I_{\T}^k(t)$ depends only on $\T-t$.  Analogously setting $\mathcal T-r=s, \,\mathcal T-\xi=\eta$ in the definition of $J_{{_\T}}^k(t)$ we get
\begin{align*}
J_{{\T}}^k(t):=  
 \int_0^{\T-t} g(\T-s-t) \Big(
 I_{{\T}}^k(\T-s)+
   K_T \int_s^{\T-t} I_{{\T}}^k(\T-\eta) g(s-\eta)d\eta \Big)ds,
\end{align*}  
and we see that $ J_{\T}^k(t)$ depends only on $\T-t$.


Let us set   
\begin{align} \label{eu}
h(t):=C\sum_{k=1}^{+\infty} \sup_{s\in[0,T]}\|(\widetilde C(s,\cdot))_k\|_{C^\beta(H;H)}^2\left((I^k(t))^2+(J^k({t}))^2\right), \quad t\in(0,\T),
\end{align}
then \eqref{dfdf2} is satisfied. It remains to prove \eqref{www}. To  this purpose 
note that, for each fixed $k \ge 1$, $I^k$ and $J^k$ are bounded function on $[0,\T]$ (uniformly in $k$). Indeed  
\begin{gather*}
0 \le  I^k({\T-t})
\leq  \int_t^\T  g(r-t)dr
\leq   \|g\|_{L^1(0,T)} = \|\Lambda_1^{1-\beta}\Lambda_2\|_{L^1(0,T)}, 
\end{gather*}
and similarly, with a constant $M_0 = M_0 (\|g\|_{L^1(0,T)}) >0$:
\begin{gather*}
 0 \le J^k({\T-t}) \le M_0, \;\; t \in [0, \T].
\end{gather*}
We need more precise estimates of the $L^1$-norms of 
$(I^k({\T-t}))^2$ and $(J^k(\T-t))^2$ to get the  estimate \eqref{abs_stima_der_sec_finite_new} on  $
 \|\nabla_{\cdot} \nabla_{\cdot}^Gu^{\T}_{n,k}(t,\cdot)\|_{C_b(H;L(H;H))}
$. 

\noindent Using that  $I^k$ and $J^k$ are uniformly bounded, we concentrate on giving bounds  for 
$$
 \| I^k \|_{L^1(0,\T)}  \;\; \text{and} \;\; \| J^k \|_{L^1(0,\T)}; 
$$
by the boundednes of $I^k$ and $J^k$ this will imply estimates for  $\| (I^k)^2 \|_{L^1(0,\T)}$ and $ \| (J^k) ^2\|_{L^1(0,\T)}. $ \newline We have  by the Fubini theorem
\begin{align*}
\int_0^{\T}I^k({\T-t})dt
 &= \int_0^{\T } dt \int_t^{\T}e^{-(\T-r)\alpha_k} g(r-t)dr=  
 \int_0^{\T}e^{-(\T-r)\alpha_k}\left(\int_0^{r} g(r-t)dt\right)dr  \notag \\
& \leq  \| g\|_{L^1(0,T)}\int_0^{\T}e^{-(\T-r)\alpha_k}dr \notag  \leq \frac{ \|\Lambda_1^{1-\beta}\Lambda_2\|_{L^1(0,T)}}{\alpha_k},
 \;\; k \ge 1,
\end{align*}
 which is the required dependence on $\alpha_k$.
 On the other hand,
\begin{gather*}
\int_0^{\T} J^k({\T-t})dt = \int_0^{\T} dt \int_t^{\T} g(r-t) \Big(
 I^k({\T-r})+
   K_T \int_r^{\T} I^k({\T-\xi}) g(\xi -r)d\xi  \Big)dr
\\=
  \int_0^{\T} dt \int_t^{\T} g(r-t) 
 I^k({\T-r}) dr  +  K_T \int_0^{\T} dt \int_t^{\T} g(r-t) 
   \Big( \int_r^{\T} I^k({\T-\xi}) g(\xi -r)d\xi \Big) dr  
   \\
  = \int_0^{\T} I^k({\T-r})  dr \int_0^{r} g(r-t) dt
  +  K_T \int_0^{\T} \Big( \int_r^{\T} I^k({\T-\xi}) g(\xi -r)d\xi \Big) dr \int_0^{r} g(r-t) dt
 \\
 \le \| g\|_{L^1(0,T)} \Big ( \int_0^{\T} I^k({\T-r})  dr + K_T \int_0^{\T} dr  \int_r^{\T} I^k({\T-\xi}) g(\xi -r)d\xi   \Big ).
\end{gather*}
Hence, denoting by $C_1$ a constant depending on $ \| g\|_{L^1(0,T)}$ we find  
\begin{gather*}
\int_0^{\T} J^k(\T-t)dt
\le \| g\|_{L^1(0,T)} \Big ( \int_0^{\T} I^k({\T-r})  dr + K_T 
 \| g\|_{L^1(0,T)}
\int_0^{\T}  I^k(\T-\xi) d\xi   \Big )
\le  \frac{C_1}{\alpha_k}.
\end{gather*}
Recalling the definition of $h$ in  \eqref{eu}, collecting the previous estimates  it follows that
\begin{align*} 
\|h\|_{L^1(0,\T)}
\leq & 2C\sum_{k=1}^\infty\frac{\|(\widetilde C(s,\cdot))_k\|_{C^\beta(H;H)}^2}{\alpha_k}<+\infty.
\end{align*} 
The proof is complete.
\end{proof}

\section{Applications}\label{sec:applications}

In this section we consider two concrete models to which our results apply:  stochastic semilinear  damped {Euler-Bernoulli beam} equation and  stochastic semilinear  heat equations.

\subsection{ An example of stochastic damped {Euler-Bernoulli beam} equation}  
\label{sub:example_stoc_wave_eq}

First we consider a  nonlinear stochastic damped {Euler-Bernoulli beam} equation with the nonlocal term $\left(-\frac{\partial^2}{\partial \xi^2} \right)^{{2\alpha}}$ and hinged boundary conditions:
\begin{align}
\label{sto_damped_WE}
\left\{
\begin{array}{ll}
\frac{\partial ^2}{\partial t^2}y(t,\xi)=-\frac{\partial^{{4}}}{\partial \xi^{{4}}}y(t,\xi)-\rho\left(-\frac{\partial^2}{\partial \xi^2} \right)^{{2\alpha}}\frac{\partial}{\partial t}y(t,\xi)\\
\phantom{\frac{\partial ^2}{\partial t^2}y(t,\xi)=}
+{\left(-\frac{\partial^2}{\partial \xi^2} \right)^{{-2\gamma}}}c(t,\xi,y(t,\xi)) 
+{\left(-\frac{\partial^2}{\partial \xi^2} \right)^{{-2\gamma}}}\dot{W}(t,\xi), & (t,\xi)\in(0,T]\times (0,1), \vspace{3mm}\\
y(t,0)={y(t,1)=\frac{\partial^2}{\partial x^2}y(t,0)=\frac{\partial^2}{\partial x^2}y(t,1)=0}, & t\in(0,T], \vspace{3mm} \\
y(0,\xi)=y_0(\xi), \ \ \frac{\partial}{\partial t}y(0,\xi)=y_1(\xi)&  \xi\in[0,1],
\end{array}
\right.  
\end{align}
with $\rho>0$ and $\alpha\in[0,{1/2}]$.   Using the terminology of \cite{chen-russ}
 this equation is in the class 
stochastic {Euler-Bernoulli beam} equations  which describe elastic systems with structural damping.

Here, $y_0$ describes the initial position and $y_1$ the initial velocity of the particle, and $\dot W(\tau,\xi)$ is a space-time white noise on $[0,T]\times [0,1]$ which describes external random forces. 

In Section \ref{Sec:stoc_damp_we} we show that
equation \eqref{sto_damped_WE} can be reformulated in an abstract way as a stochastic evolution equation in {a suitable space $H$} of the form \eqref{nonlinear_stochastic_eq_intro}.

If the term $c(\cdot,\cdot,\cdot)$ satisfies the next  conditions, then we are able to apply our results to equation \eqref{sto_damped_WE} which give the pathwise uniqueness for  mild solutions.
  
\vskip 1mm   
{\it The function $c : [  0,T ]  \times [  0,1 ]
\times \R \to \R $  is measurable and,  for  $s $
$\in [  0,T ]  ,$ a.e. $\xi$ $\in [  0,1 ]  ,$ the map $c (
s,\xi,\cdot ) $ $ :\R  \rightarrow \R$ is continuous. 
 There exists $c_1$ bounded and measurable 
on $[  0,1] $, $\beta \in (0,1)$,  such that,
 for  $s\in [
0,T ]  $ and a.e. $\xi$ $ \in [  0,1 ],$   
 \begin{align} \label{w33} 
|  c (  s,\xi,x )  - c(  s,\xi,y )  |
\leq c_{1} (  \xi )  |  x-y |^\beta,
\end{align}
$ x,\,y \in \R$. Moreover
$|  c (  s,\xi,x )  |  \leq $ $ d_{2} (  \xi)
,$  for $s \in [0,T]$, $x \in \R$ and a.e. $\xi \in [0,1]$, with $d_{2} \in L^2 ([  0,1] ) $.
}

\vskip 1mm 
  
To deal with equation \eqref{sto_damped_WE} first  we have to show the well-posedness when  $c=0$, by proving that the stochastic convolution is well defined in $H$: {this is an easy consequence of the fact that $\Lambda^{-\gamma}$ is a trace class operator on $U$}.

Once that the well-posedness of the linear stochastic damped {Euler-Bernoulli beam} equation 
is proved, we investigate the regularizing effects of the associated transition semigroup $(R_t)$ (cf. Section \ref{sec:OU_smgr_reg}).  These effects can be proved by means of optimal blow-up rates for the minimal energy associated to  null controllability of  related  linear deterministic control systems. 
To this purpose we use a spectral approach to the damped elastic operators introduced in \cite{chen-russ} and recovered in \cite{lasi-trig} and \cite{trig}.
For the stochastic damped {Euler-Bernoulli beam} equation we will get  
\begin{align*}  
{\Lambda_1(t)=\Lambda_2(t)=t^{-1/2-2\gamma}}
\end{align*} 
(cf. Hypotheses  \ref{hyp_22} and \ref{hyp:sigma_3}) for every $\gamma\in(1/8,1/4)$.  
So we are able to we prove that if   
\begin{gather*}
\rho^2\neq 4 (\pi^2 n^2)^{1-2\alpha} 
\end{gather*}
and  $\beta$ given in \eqref{w33} belongs to $\left(\overline \beta,1\right)$ where {$\overline \beta=8\gamma/(1+4\gamma)$}, then {\it pathwise uniqueness holds true for equation \eqref{sto_damped_WE}.}  

\subsection{Semilinear stochastic damped {Euler-Bernoulli beam} equations {in general form}}
\label{Sec:stoc_damp_we}

\subsubsection{ {Setting and  assumptions} }
 \label{sec6:set_ass}

We consider the following nonlinear stochastic {damped beam} equation which is a general form of \eqref{sto_damped_WE}:
\begin{align} 
\label{abst_sto_damped_WE_1}
\left\{  
\begin{array}{ll}
\displaystyle \frac{\partial ^2y}{\partial t^2}(t)=-\Lambda y(t)-\rho \Lambda^\alpha\frac{\partial y}{\partial t}(t)+  {\Lambda^{-\gamma}}\tilde C\left(t,y(t), \frac{\partial y}{\partial t}(t)\right)+{\Lambda^{-\gamma}}\dot{W}_t, & t\in(0,T], \vspace{1mm}\\
y(0)=y_0, \vspace{1mm} \\
\displaystyle \frac{\partial y}{\partial t}(0)=y_1,
\end{array}
\right.
\end{align}
 with $\rho>0$ and $\alpha\in[0,{1/2}]$ {and $\gamma\in(1/8,1/4)$}. Here, $\Lambda:D(\Lambda)\subset U\rightarrow U$ is a {\it positive self-adjoint operator} on a separable Hilbert space $U$ such that 
\begin{equation}\label{lamb1}
\text{ $\Lambda^{-{2\gamma}}$ {\it which is a trace class operator from $U$ into $U$} }
\end{equation}
 and $W=\{W(\tau):\tau\geq0\}$ is a cylindrical Wiener process on $U$.

We aim at formulating this equation as an abstract stochastic  evolution equation in the product space $H:=V\times U=: D(\Lambda^{1/2})\times U$. 

About  $\tilde C: [0,T]\times H\rightarrow U$ in \eqref{abst_sto_damped_WE_1} we will  assume that it is continuous and bounded and there exists a positive constant $K$ and  $\beta\in ( 0,1)$ such that 
\begin{align} \label{ey33}
|\widetilde C(t, h)-\widetilde C (t,h')|_U\leq K|h-h'|^\beta_H, \quad h,h'\in H, \ t\in[0,T].
\end{align}
(cf. (i) in Hypothesis \ref{hyp_3}).    

{We will assume the following hypothesis.
\begin{hypothesis}
\label{hyp_non_linear_damped_1}
For every $\alpha\in[0,1/2]$ the index $\beta$ belongs to $\left(\displaystyle \frac{8\gamma}{1+4\gamma},1\right)$. 
\end{hypothesis}}
We notice that, since $\gamma<1/4$, the quantity $8\gamma/(1+4\gamma)<1$.

Let $(\mu_n)$ be the family of eigenvalues of $\Lambda$. We are assuming that  
\begin{align}
\label{cond_traccia_fin_mu_n}
\sum_{n \ge 1} \mu_n^{-{2\gamma}}  < \infty.
\end{align} 
We also require 
\begin{hypothesis} 
\label{hyp_rho}
For any $n\in\N$ $\rho^2\neq 4\mu_n^{1-2\alpha}$.  
\end{hypothesis}

\begin{remark}   Concerning the basic example \eqref{sto_damped_WE} in Section \ref{sub:example_stoc_wave_eq}, we have
\begin{align*}
& \mathcal{D}\left(  \Lambda\right)  =\{y\in\
H^{2}\left(\left[  0,1\right]\right)\cap H_0^1\left(\left[  0,1\right]\right):
y''\in
H^{2}\left(\left[  0,1\right]\right)\cap H_0^1\left(\left[  0,1\right]\right)\}, \\
& \Lambda y    =y^{(iv)}=(-y'')''
\in L^2([0,1]) ,\text{ \ for every }y\in\mathcal{D}\left(
\Lambda\right).
\end{align*}
Moreover   
 $y_0 \in  V = H^{2}\left(\left[  0,1\right]\right)\cap H_0^1\left(\left[  0,1\right]\right)$, $y _1 \in U=L^2([0,1])$.   Note that $\Lambda^{{-2\gamma}}$   has {\it finite trace} for $\gamma>1/8$ since the eigenvalues of $\Lambda$ are  $\lambda_n = \pi^4 n^4$, $n \ge 1$.
\end{remark}

By considering  $\widetilde G: U  \longrightarrow H$,
$\widetilde G u=\left(
\begin{array}
[c]{c}%
0\\
u
\end{array}
\right)  =\left(
\begin{array}
[c]{c}%
0\\
I
\end{array}
\right)  u
$
 for any $u\in U$, $G=\widetilde G\Lambda^{-\gamma}$, and for any $ h  =(h_1, h_2) \in H$ we define
\begin{equation}\label{B}
C(\tau, h) = G  \, \tilde C (\tau, h) (\xi):=
\left(\begin{array}{l}
     0\\ 
c(\tau, \xi, h_1(\xi)) 
 \end{array}\right), \;\;\; \xi \in [0,1],\; \tau \in [0,T]. \; 
\end{equation}
It is easy to see that $\tilde C  (\tau, h) =  
c(\tau, \cdot , h_1(\cdot )) $ with values in  $U$ is $\beta$-H\"older continuous  
 in $h$ uniformly in $\tau$ (cf.  \eqref{ey33}).
We consider also the operators $\cala_{\alpha,\rho}:D(\cala_{\alpha,\rho})\subset H\rightarrow H$  
\begin{align}
\label{def:A_alfarho-G-1_bis}
\cala_{\alpha,\rho}
:=\left(\begin{matrix}
0 & I \\ -\Lambda & -\rho \Lambda^\alpha
\end{matrix}\right).
\end{align}
Writing 
$ X_\tau(\xi):=
\Big(
\begin{array}{l}
  y(\tau, \xi)\\
  \frac{\partial}{\partial \tau}y(\tau,\xi)
 \end{array}
 \Big),$ it follows that equation \eqref{abst_sto_damped_WE_1} can be reformulated as a stochastic evolution equation in $H$:
\begin{align}
\label{abs_stoc_damped_wave_eq_2_bis}
\left\{
\begin{array}{ll}
dX_\tau= {\mathcal A_{\alpha,\rho}}X_\tau d\tau+G\widetilde C(\tau,X_\tau)d\tau+G dW_\tau, & \tau\in[0,T], \vspace{1mm} \\
X_0=\overline x_0:=\left(\begin{matrix}
x_0^1\\x_0^2
\end{matrix}
\right)\in H,
\end{array} \right.
\end{align}
which has the form of \eqref{abs_1nonlinear_stochastic_eq}.

In the next Sections \ref{subsec:mathscrA_alpha-rho} and \ref{1sub_app_spectral_dec} we provide
 preliminary results. Then in Section \ref{subsec:linear_wav_eq} we prove that stochastic linear equation \eqref{abs_stoc_damped_wave_eq_2_bis} or \eqref{abst_sto_damped_WE_1} with  $\widetilde C\equiv 0$ is well-posed in $H$.  To this purpose we need Hypothesis \ref{hyp_rho} and  condition \eqref{lamb1}. 
  Finally, in section \ref{sub:strong_uniq_damped_wave} we will formulate our main result on well-posedness for \eqref{abst_sto_damped_WE_1}.

\subsubsection{The operator {$\widetilde {\mathcal{A}}_{\alpha,\rho}$}}
\label{subsec:mathscrA_alpha-rho}

The techniques of \cite{chen-russ} and of \cite{lasi-trig} can be adapted   to our situation.
In order to study properties of the operator $\mathcal{A}_{\alpha,\rho}$, we introduce the operator
\begin{align}\label{tilde_A}
\widetilde {\cala}_{\alpha,\rho}:=\left(
\begin{matrix}
0 & \Lambda^{1/2} \\
-\Lambda^{1/2} & -\rho \Lambda^\alpha
\end{matrix}
\right)
\end{align}
on the space $\widetilde H:=U\times U$, with $D(\widetilde {\cala}_{\alpha,\rho}):=D(\Lambda^{1/2})\times D(\Lambda^{(1/2)\vee \alpha})$. We recall that $ D(\Lambda^{1/2})=V$.
When $\rho$ satisfies suitable assumptions (see Hypothesis \ref{hyp_rho}) we have 
two generation results: from \cite[Appendix A]{chen-trig-1} the operator $\cala_{\alpha,\rho}$ generates a strongly continuous semigroup $(e^{t\cala_{\alpha,\rho}})_{t\geq0}$ on $H$ which is also analytic for $\alpha\in[\frac12,1)$, and from \cite[Section 3]{lasi-trig} the operator $\widetilde {\cala}_{\alpha,\rho}$ generates a strongly continuous semigroup $(e^{t\widetilde {\cala}_{\alpha,\rho}})_{t\geq0}$ on $\widetilde H$. 
\newline 
Let us introduce the operator $M:H\rightarrow \widetilde H$ defined as
\begin{align}\label{eq:M}
M:=
\left(
\begin{matrix}
\Lambda^{1/2} & 0 \\ 0 & I
\end{matrix}
\right), \quad M\left(   
\begin{matrix}
x_1 \\ x_2 
\end{matrix}
\right)=
\left(
\begin{matrix}
\Lambda^{1/2}x_1 \\ x_2 
\end{matrix}
\right).
\end{align}
We notice that $\widetilde H=MH$. Further, since
\begin{align*}
 \langle Mx,y\rangle_{\widetilde H}
= \langle \Lambda^{1/2}x_1,y_1\rangle_{U}
+ \langle x_2,y_2\rangle_{U}
= \langle x_1,\Lambda^{-1/2}y_1\rangle_{V}+\langle x_2,y_2\rangle_{U}=\langle x,M^*y\rangle_{H},
\end{align*}
it follows that
\begin{align*}
M^*=\left(
\begin{matrix}
\Lambda^{-1/2} & 0 \\
0 & I
\end{matrix}
\right)=M^{-1}:\widetilde H\rightarrow H.
\end{align*}
The operator $M$ is the link between $(e^{t\cala_{\alpha,\rho}})_{t\geq0}$ and $(e^{t\widetilde {\cala}_{\alpha,\rho}})_{t\geq0}$, as the following lemma states.
\begin{lemma}
\label{1lemma_app_uguag_smgr} Let $\cala_{\alpha,\rho},\,\widetilde{\cala}_{\alpha,\rho}$ and $M$ be defined in (\ref{def:A_alfarho-G-1_bis}), in \eqref{tilde_A} and in \eqref{eq:M}, respectively. Then, for any $t>0$ and any $y\in H$ we have $Me^{t\cala_{\alpha,\rho}}y=e^{t\widetilde{ \cala}_{\alpha,\rho}}My$ .
\end{lemma}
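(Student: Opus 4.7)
The plan is to reduce the identity to a direct matrix intertwining relation $\widetilde{\mathcal{A}}_{\alpha,\rho}M = M\mathcal{A}_{\alpha,\rho}$ on an appropriate dense domain, and then propagate this to the semigroups using uniqueness of the Cauchy problem (or equivalently the resolvent/Laplace transform).

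\textbf{Step 1: algebraic intertwining.} Since $M^{-1} = M^*$ exists as a bounded operator $\widetilde H \to H$, I will compute $M\mathcal{A}_{\alpha,\rho}M^{-1}$ on the natural domain by block matrix multiplication:
\begin{equation*}
M\mathcal{A}_{\alpha,\rho}M^{-1} = \begin{pmatrix} \Lambda^{1/2} & 0 \\ 0 & I\end{pmatrix}\begin{pmatrix} 0 & I \\ -\Lambda & -\rho\Lambda^{\alpha}\end{pmatrix}\begin{pmatrix} \Lambda^{-1/2} & 0 \\ 0 & I\end{pmatrix} = \begin{pmatrix} 0 & \Lambda^{1/2} \\ -\Lambda^{1/2} & -\rho\Lambda^{\alpha}\end{pmatrix} = \widetilde{\mathcal{A}}_{\alpha,\rho}.
\end{equation*}
The computation uses only that $\Lambda$ commutes with its own fractional powers. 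I will verify that this calculation is legitimate on $D(\mathcal{A}_{\alpha,\rho})$: for $y = (x_1,x_2) \in D(\mathcal{A}_{\alpha,\rho})$ one checks that $My = (\Lambda^{1/2}x_1,x_2) \in D(\widetilde{\mathcal{A}}_{\alpha,\rho})$, so that the identity $\widetilde{\mathcal{A}}_{\alpha,\rho}My = M\mathcal{A}_{\alpha,\rho}y$ makes sense in $\widetilde H$.

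\textbf{Step 2: propagation to the semigroups.} Fix $y \in D(\mathcal{A}_{\alpha,\rho})$ and define $u(t):= Me^{t\mathcal{A}_{\alpha,\rho}}y$ and $v(t):= e^{t\widetilde{\mathcal{A}}_{\alpha,\rho}}My$ as $\widetilde H$-valued curves. Since $e^{t\mathcal{A}_{\alpha,\rho}}$ leaves $D(\mathcal{A}_{\alpha,\rho})$ invariant and $M$ is bounded from $H$ into $\widetilde H$, $u$ is of class $C^1$ with
\begin{equation*}
u'(t) = M\mathcal{A}_{\alpha,\rho}e^{t\mathcal{A}_{\alpha,\rho}}y = \widetilde{\mathcal{A}}_{\alpha,\rho}Me^{t\mathcal{A}_{\alpha,\rho}}y = \widetilde{\mathcal{A}}_{\alpha,\rho}u(t),\qquad u(0) = My,
\end{equation*}
by Step 1. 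The curve $v$ solves the same Cauchy problem in $\widetilde H$, so by uniqueness (i.e.\ the semigroup property of $(e^{t\widetilde{\mathcal{A}}_{\alpha,\rho}})$) we get $u(t)=v(t)$ for every $t\ge 0$ and every $y\in D(\mathcal{A}_{\alpha,\rho})$.

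\textbf{Step 3: extension to all $y\in H$.} Both maps $y \mapsto Me^{t\mathcal{A}_{\alpha,\rho}}y$ and $y \mapsto e^{t\widetilde{\mathcal{A}}_{\alpha,\rho}}My$ are bounded linear from $H$ into $\widetilde H$ (the first because $M\in L(H,\widetilde H)$ and $e^{t\mathcal{A}_{\alpha,\rho}}\in L(H)$, the second because $M\in L(H,\widetilde H)$ and $e^{t\widetilde{\mathcal{A}}_{\alpha,\rho}}\in L(\widetilde H)$). Since $D(\mathcal{A}_{\alpha,\rho})$ is dense in $H$, the identity obtained in Step 2 extends by continuity to every $y\in H$, concluding the proof.

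There is no hard step: the content is purely algebraic, and the only point requiring minor care is the domain check in Step 1 (ensuring $My\in D(\widetilde{\mathcal{A}}_{\alpha,\rho})$ whenever $y\in D(\mathcal{A}_{\alpha,\rho})$), which follows directly from the explicit descriptions of the two domains given in Section~\ref{subsec:mathscrA_alpha-rho}. An equivalent route, which avoids the domain discussion, would be to establish the intertwining at the level of resolvents, $M(\lambda-\mathcal{A}_{\alpha,\rho})^{-1}=(\lambda-\widetilde{\mathcal{A}}_{\alpha,\rho})^{-1}M$ for $\lambda$ in a common half-plane of the resolvent sets, and to invoke the inverse Laplace transform representation of the semigroups.
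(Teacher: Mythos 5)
Your proof is correct and follows essentially the same route as the paper: the authors also verify the intertwining relation $M\mathcal{A}_{\alpha,\rho}=\widetilde{\mathcal{A}}_{\alpha,\rho}M$ on the dense set $D(\Lambda)\times D(\Lambda^{(1/2)\vee\alpha})$, compare $f(t)=Me^{t\mathcal{A}_{\alpha,\rho}}y$ and $g(t)=e^{t\widetilde{\mathcal{A}}_{\alpha,\rho}}My$ via uniqueness of the abstract Cauchy problem, and conclude by density. No gaps.
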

\begin{proof} By density, to get the thesis it is enough to prove the equality for $y\in D(\Lambda)\times D(\Lambda^{(1/2)\vee\alpha})$. Let $y\in D(\Lambda)\times D(\Lambda^{(1/2)\vee\alpha})$. We set $f(t):=Me^{t\cala_{\alpha,\rho}}y$ and $g(t):=e^{t\widetilde {\cala}_{\alpha,\rho}}My$. For any $t>0$ we have
\begin{align*}
f'(t)=M\cala_{\alpha,\rho}e^{t\cala_{\alpha,\rho}}y
= \left(
\begin{matrix}
0 & \Lambda^{1/2} \\
-\Lambda & -\rho \Lambda^{\alpha}
\end{matrix}
\right)M^{-1}f(t)=\widetilde {\cala}_{\alpha,\rho}\ \!f(t),
\end{align*}
and $f(0)=My$. Let us set $h:=f-g$, since $g'(t)=\widetilde {\cala}_{\alpha,\rho}\ \!g(t)$ for any $t>0$ and $g(0)=My$ it follows that $h\in C^1([0,+\infty),H)$, $h'(t)=\widetilde {\cala}_{\alpha,\rho}\ \!h(t)$ on $(0,T]$ and $h(0)=0$. This gives $h\equiv0$ which implies the thesis.
\end{proof}

\subsubsection{Spectral decomposition in {$\widetilde H$} $= U \times U$   }
\label{1sub_app_spectral_dec}

{Only in this subsection and in Appendix \ref{appendix A} we consider complexified spaces. We do not change the notation to not weigh down them.}

Let $\Lambda:D(\Lambda)\subset U\rightarrow U$ be  as in Section \ref{sec6:set_ass},  let $(\mu_n)$ be the family of eigenvalues of $\Lambda$ (without loss of generality we can assume that they are simple, see \cite{lasi-trig}), and let $\{e_n\}_{n\in\N}$ be a family of corresponding eigenvectors (not-normalized), i.e., $\Lambda e_n=\mu_n e_n$, $n\in\N$; we know that  $\mu_n\nearrow+\infty$ as $n\rightarrow+\infty$. 
We notice that $\{e_n\}_{n\in\N}$ forms an orthogonal basis of $U$. 
Further, let $\widetilde G:U\rightarrow \widetilde H$ be defined as before, see \eqref{B}. 

Now we extend the computations introduced in \cite{chen-russ} and recovered in \cite{lasi-trig} and \cite{trig} to the space $\widetilde H$; it follows that the operator $\widetilde{\mathcal A}_{\alpha,\rho}$ has eigenvalues $\lambda_n^+,\lambda_n^-$ in $H$, $n\in\N$, where 
\begin{align*}
\lambda_n^{\pm}:=\frac{-\rho\mu_n^{\alpha}\pm\sqrt{\rho^2\mu_n^{2\alpha}-4\mu_n}}{2}, \quad \lambda_n^+\lambda_n^-={\mu_n}, \quad \lambda_n^++\lambda_n^-=-\rho\mu_n^\alpha, \quad n\in\N,
\end{align*}
 Recall that Hypothesis \ref{hyp_rho}   ensures that the eigenvalues  $\{\lambda_n^{\pm} :n\in\N\}$ of $\widetilde {\cala}_{\alpha,\rho}$ are simple  (see \cite[Section 3]{lasi-trig} and also Section 2.3 in \cite{trig}).
 The normalized corresponding eigenvectors are given by
\begin{align}
\label{1autovet_phi}
\Phi_n^{+}:=\left(
\begin{matrix}
\mu_n^{1/2} e_n \\ \lambda_n^+ e_n
\end{matrix}
\right), \quad 
\Phi_n^{-}:=\chi_n\left(
\begin{matrix}
\mu_n^{1/2} e_n \\ \lambda_n^- e_n
\end{matrix}
\right), 
\end{align}
 where (possibly replacing { $e_n$ by $\gamma_n e_n$})  we may assume that 
$$
(\mu_n+|\lambda_n^+|^2)|e_n|^2_{U}=1, \quad  \; \;
\chi_n^2(\mu_n+|\lambda_n^-|^2)|e_n|_{U}^2=1, \quad n\in\N,
$$ 
with $\chi_n^2 =  \frac{\mu_n+|\lambda_n^+|^2}{\mu_n+|\lambda_n^-|^2}$.
{\it Note that   $\{\Phi_n^+:n\in\N\}$ and $\{\Phi_n^-:n\in\N\}$
each forms an orthonormal family on $\widetilde  H$.} 
  
On the other hand, $\{\Phi_n^+:n\in\N\} \cup \{\Phi_n^-:n\in\N\}$  is a complete family on $\widetilde  H$ under Hypothesis \ref{hyp_rho}.  
Let us set $\widetilde {H}^+:=\overline{\rm span}\{\Phi_n^+:n\in\N\}$, $\widetilde {H}^-:=\overline{\rm span}\{\Phi_n^-:n\in\N\}$ and let us consider the decomposition 
$$\widetilde {H}=\widetilde {H}^+\oplus \widetilde {H}^-  
  \;\;\; \text{ (non-orthogonal, direct sum).}  
  $$
We denote by $x^+$ the projection of $x$ on $H^+$ and on $x^-$ the projection of $x$ on $H^-$.   From the previous decomposition we have
\begin{align}
\widetilde{\mathcal A}_{\alpha,\rho} x & =\sum_{n=1}^\infty \left(\lambda_n^+\langle x^+,\Phi_n^+\rangle_{\widetilde H} \Phi_n^++\lambda_n^-\langle x^-,\Phi_n^-\rangle_{\widetilde H}\Phi_n^-\right), \quad x\in D(\widetilde{\mathcal A}_{\alpha,\rho}), 
\label{formula_a_tilde}\\
e^{t\widetilde{\mathcal A}_{\alpha,\rho}}x & =\sum_{n=1}^\infty \left(e^{\lambda_n^+t}\langle x^+,\Phi_n^+\rangle_{\widetilde H}\Phi_n^++e^{\lambda_n^-t}\langle x^-,\Phi_n^-\rangle_{\widetilde H}\Phi_n^-\right), \quad t\in[0,+\infty), \ x\in {\widetilde H}. \label{formula_semgr_e_tAtilde}
\end{align}
Further,  for any $a\in U$ we have     
\begin{align} \label{wqqq}
\widetilde Ga=\left(
\begin{matrix} 
0 \\ a
\end{matrix}
\right)=:\sum_{n=1}^\infty\left(c_n^+\Phi_n^++c_n^-\Phi_n^-\right);
\end{align}
by considering the orthonormal basis $\{e_n/|e_n|_{U}\}_{n\in\N}$ of $U$ it follows that
\begin{align*}
c_n^++\chi_nc_n^-=0, \quad (c_n^+\lambda_n^++\chi_nc_n^-\lambda_n^-)|e_n|_{U}^2=\langle a,e_n\rangle_{U}, \quad n\in\N,
\end{align*}
which implies
\begin{align*}
c_n^-=-\frac{c_n^+}{\chi_n}, \quad c_n^+=\frac{1}{(\lambda_n^+-\lambda_n^-)|e_n|_{U}}\langle a,e_n/|e_n|_{U}\rangle_{U}, \quad n\in\N.
\end{align*}
Further, let $\widetilde{\mathcal A}_{\alpha,\rho}^+:=\widetilde{\mathcal A}_{\alpha,\rho}^{\widetilde {H}^+}:D(\widetilde{\mathcal A}_{\alpha,\rho}^+)(:=D(\widetilde{\mathcal A}_{\alpha,\rho})\cap \widetilde {H}^+)\subset \widetilde {H}^+\rightarrow \widetilde {H}^+$ and $\widetilde{\mathcal A}_{\alpha,\rho}^-:=\widetilde{\mathcal A}_{\alpha,\rho}^{\widetilde {H}^-}:D(\widetilde{\mathcal A}_{\alpha,\rho}^-)(:=D(\widetilde{\mathcal A}_{\alpha,\rho})\cap \widetilde {H}^-)\subset \widetilde {H}^-\rightarrow \widetilde {H}^-$ be the restrictions of $\widetilde {\cala}_{\alpha,\rho}$ to ${\widetilde H^+}$ and ${\widetilde H^-}$, respectively. For any $h\in \widetilde {H}$ we denote by $h^+$ and by $h^-$ its projection on $\widetilde {H}^+$ and $\widetilde {H}^-$, respectively. With respect to this decomposition, the operators
\begin{align*}
\widetilde{\mathcal A}_{\alpha,\rho}=\left(
\begin{matrix}
\widetilde{\mathcal A}_{\alpha,\rho}^+ & 0 \\
0 & \widetilde{\mathcal A}_{\alpha,\rho}^-
\end{matrix}
\right), \quad
e^{t\widetilde{\mathcal A}_{\alpha,\rho}}=
\left(
\begin{matrix}
e^{t\widetilde{\mathcal A}_{\alpha,\rho}^+} & 0 \\
0 & e^{t\widetilde{\mathcal A}_{\alpha,\rho}^-}
\end{matrix}
\right), \ t\geq0,
\quad \widetilde G=\left(
\begin{matrix}
\widetilde G^+ \\ \widetilde G^-
\end{matrix}
\right),
\end{align*}
admit the following explicit formulae:
\begin{align*}
\widetilde{\mathcal A}_{\alpha,\rho}^+x^+ & =\sum_{n=1}^\infty \lambda_n^+\langle x^+,\Phi_n^+\rangle_{\widetilde {H}}\Phi_n^+, \ x^+\in D(\widetilde{\mathcal A}_{\alpha,\rho}^+),
&\widetilde{\mathcal A}_{\alpha,\rho}^-x^-  &  = \sum_{n=1}^\infty \lambda_n^-\langle x^-,\Phi_n^-\rangle_{\widetilde {H}}\Phi_n^-, \ x^-\in D(\widetilde{\mathcal A}_{\alpha,\rho}^-), \\
e^{t\widetilde {\cala^+}_{\alpha,\rho}}x^+ & =\sum_{n=1}^\infty e^{t\lambda_n^+}\langle x^+,\Phi_n^+\rangle_{\widetilde {H}}\Phi_n^+, \quad x^+\in \widetilde {H}^+, 
&e^{t\widetilde {\cala^-}_{\alpha,\rho}}x^-  & = \sum_{n=1}^\infty e^{t\lambda_n^-}\langle x^-,\Phi_n^-\rangle_{\widetilde {H}}\Phi_n^-, \quad x^-\in \widetilde {H}^-, \\
\widetilde G^+a & =\sum_{n=1}^\infty b_n^+a_n\Phi_n^+, \quad a\in U, 
&\widetilde G^-a  & = \sum_{n=1}^\infty b_n^- a_n\Phi_n^-, \quad a\in U,
\end{align*}
where { (cf. \eqref{wqqq})}
\begin{align}
\label{formula_b_n}
a_n:=\langle a,e_n/|e_n|_{U}\rangle_{U}, \quad b_n^+:=\frac{1}{(\lambda_n^+-\lambda_n^-)|e_n|_{U}}, \quad b_n^-:=-\frac{b_n^+}{\chi_n}, \quad n\in\N.
\end{align}
From the definition of $e_n$, 
$\lambda_n^{\pm},\mu_n$ and $b_n^{\pm}$ (cf. formulae \cite[(2.3.14)-(2.3.18)]{trig}) we have
\begin{align}
\label{1stime_coefficienti_avl}
|\lambda_n^{\pm}|\sim \mu_n^{1/2}, \quad  \,  |e_n|_{U}\sim \mu_n^{-1/2}, \quad |\lambda_n^+-\lambda_n^-|\sim \mu_n^{1/2}, \quad b_n^{\pm}\sim {\rm cost}, \quad \chi_n\sim {\rm cost},
\end{align}
definitively with respect to $n\in\N$. {Finally, since $|\lambda_n^+|,|\lambda_n^-|$ blows up as $n\rightarrow +\infty$ and $\lambda_n^+,\lambda_n^-$ has negative real part, from \eqref{formula_a_tilde} and \eqref{formula_semgr_e_tAtilde} it follows that $t\mapsto e^{t\widetilde {\mathcal A}_{\alpha,\rho}}x$ belongs to $C^1((0,+\infty);\widetilde H)\cap C((0,+\infty);D((\widetilde {\mathcal A}_{\alpha,\rho})^\eta))$ for any $\eta>0$ and any $x\in \widetilde H$, and for any $T>0$ and $\eta>0$ there exists a positive constant $L=L_{T,\eta}$ such that $\|(\widetilde {\mathcal A}_{\alpha,\rho})^\eta e^{t\widetilde{\mathcal A}_{\alpha,\rho}}\|_{L(\widetilde H)}\leq t^{-\eta}L_T$.}
 
\subsubsection{Linear stochastic damped {Euler-Bernoulli beam} equations}
\label{subsec:linear_wav_eq}
Let us consider the problem
\begin{align}\label{lin_stoc_problem_1}
\left\{
\begin{array}{ll}
dX_t= {\mathcal A_{\alpha,\rho}}X_tdt+GdW_t, & t\in[0,T], \vspace{1mm} \\
X_0=\left(\begin{matrix}
x_0^1\\x_0^2
\end{matrix}
\right)=x \in H = V\times U.\end{array} \right.
\end{align}
where $\mathcal A_{\alpha,\rho}$ has been defined in (\ref{def:A_alfarho-G-1_bis}). 
 In the following we will refer to the solution of equation \eqref{lin_stoc_problem_1} as the Ornstein-Uhlenbeck process. We have $\alpha\in[0,1)$ and $\rho$ satisfying Hypothesis \ref{hyp_rho}. 
 
 {\it We will show that equation (\ref{lin_stoc_problem_1}) is well-posed in $H$,}  i.e.,   for any $x \in H$, there exists a unique mild solution  having continuous paths with values in $H$  (cf. \eqref{mild}). This  
is given by 
\begin{align}
\label{mild_solution_lin_prb_1}
X_t=e^{t{\mathcal A_{\alpha,\rho}}}x+\int_0^te^{(t-s){\mathcal A_{\alpha,\rho}}}GdW_s, \quad t\in[0,T],\;\; \P-a.s..
\end{align}
To this purpose we need to show that the stochastic convolution
\begin{align}
\label{stochastic_convolution_1} 
W_{\cala_{\alpha,\rho}}(t):=\int_0^te^{(t-s){\mathcal A_{\alpha,\rho}}}GdW_s, \quad t\in[0,T],
\end{align}
verifies condition \eqref{e5.17} (cf. Section 5.3 in  \cite{DPsecond}). {This is an easy consequence of \eqref{lamb1}.}
 \begin{proposition}
    \label{lemma_conv_stocastica}
Assume that Hypothesis \ref{hyp_rho} and condition \eqref{lamb1} are satisfied. Then, 
 \begin{align} 
\label{stima_somma_stoc_convolution}
    {  \sup_{t \ge 0 } \| e^{t \cala_{\alpha\,\rho}} G \|_{L_2(U, H)} < \infty }
\end{align} 
  which implies \eqref{e5.17}. 
\end{proposition}
{
\begin{proof}
Formula \eqref{stima_somma_stoc_convolution} follows from the fact that, from \eqref{cond_traccia_fin_mu_n}, $\Lambda^{-2\gamma}$ is a trace-class operator on $U$ and that $G=\widetilde G\Lambda^{-\gamma}$.
\end{proof}}
 
\subsubsection{Strong uniqueness for nonlinear damped equations}
\label{sub:strong_uniq_damped_wave}

Let us consider the nonlinear equation \eqref{abs_stoc_damped_wave_eq_2_bis} or \eqref{abst_sto_damped_WE_1}
under the hypotheses given in Section \ref{sec6:set_ass}.  We know that such assumptions implies in particular that Hypothesis \ref{hyp_1} is satisfied, with $A=\cala_{\alpha,\rho}$ and $G$ defined as in Section \ref{sec6:set_ass}. 
 Hence, there exists a unique weak solution $X=(X_t)_{t\geq0}$ to\  
 \eqref{abs_stoc_damped_wave_eq_2_bis}. 

Next we  show that the controllability assumption in Hypothesis \ref{hyp_22}$(i)$, the estimates on $\Lambda_1$ and $\Lambda_2$ in Hypotheses \ref{hyp_22}$(ii)$ and in \ref{hyp:sigma_3} and Hypothesis \ref{hyp_3} hold true, with $\Gamma(t)=\Gamma_{\alpha,\rho}(t)=\mathscr (Q_t^{\alpha,\rho})^{-1/2}e^{t\cala_{\alpha,\rho}}$ for any $t>0$.
\begin{proposition}
\label{pro:energy_est_damped}
Let us take $A=\cala_{\alpha,\rho}$ and $G$ as in Section \ref{sec6:set_ass}, let $\Gamma(t)=\Gamma_{\alpha,\rho}(t)=\mathscr (Q_t^{\alpha,\rho})^{-1/2}e^{t\cala_{\alpha,\rho}}$ for any $t>0$ and let $\beta$ satisfies Hypothesis \ref{hyp_non_linear_damped_1}.
Then, Hypotheses \ref{hyp_22}, \ref{hyp_3} and \ref{hyp:sigma_3} hold true with
\begin{align}
\label{stime_energia}  
{\Lambda_1(t)\sim\Lambda_2(t)
\sim 
 t^{-1/2-2\gamma}}
 \end{align}
for every $\alpha\in\left[0,\frac12\right]$
\end{proposition}
\begin{proof}
Estimates of $\Lambda_1$ and $\Lambda_2$ follows from Corollary \ref{app:stime_energia_direzionali_1} and Theorem \ref{thm:stime_globali_damped_beam_control}.
{
Further, 
\begin{align*}
|\Gamma(t)Ga|_H\leq 
{C}{t^{-1/2-2\gamma}}|\Lambda^{-\gamma} a|_U, \quad t\in(0,T],
\end{align*}
for every $\alpha\in[0,1/2]$.}
Therefore, for any orthonormal basis $\{h_n:n\in\N\}$ of $U$ we have
\begin{align*}
\|\Gamma(t)G\|_{L_2(U;H)}^2
= &\sum_{n\in\N}|\Gamma(t)Gh_n|_H^2
\leq \frac{C^2}{t^{1+4\gamma}} \sum_{n\in\N} |\Lambda^{-\gamma} h_n|_U^2=\frac{C^2 { {\rm Tr}(\Lambda^{-2\gamma}) } }{t^{1+4\gamma}},
\quad t\in(0,T].
\end{align*}
{
This implies that \eqref{stima_hyp_HS} holds true with $\Lambda_2(t)=t^{-(1/2+2\gamma)}$.}

{Let us conclude by proving that also Hypothesis \ref{hyp_3} are satisfied. Indeed, we have
\begin{align*}
\Lambda_1(t)^{1-\beta}\Lambda_2(t)
\sim\   t^{-(1/2+2\gamma)(2-\beta)},
\end{align*}
and from the choice of $\beta$ we get $-(1/2+2\gamma)(2-\beta)>-1$.}
\end{proof}
{ The previous results show that we can apply Theorem \ref{teo:abs_uni1} to equation \eqref{abs_stoc_damped_wave_eq_2_bis}. We finally get} 
\begin{theorem} \label{wff}
Let $\cala_{\alpha,\rho}$ and $G$ be defined as in Section \ref{sec6:set_ass}, 
and let the assumptions of  Section \ref{sec6:set_ass} be satisfied. Then, for the nonlinear damped beam equation \eqref{abs_stoc_damped_wave_eq_2_bis} the assertions of Theorem \ref{teo:abs_uni1} hold. In particular, we have  pathwise uniqueness for \eqref{abs_stoc_damped_wave_eq_2_bis}. 
\end{theorem}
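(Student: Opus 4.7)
The proof is essentially a verification that the abstract framework of Theorem \ref{teo:abs_uni1} applies to the damped wave equation in its reformulation \eqref{abs_stoc_damped_wave_eq_2_bis}. The plan is therefore to check Hypotheses \ref{hyp_1} and \ref{hyp_2_1}, with the Yosida approximations $A_n = nAR(n, \mathcal{A}_{\alpha,\rho})$ playing the role required in Section \ref{sub:yosida_appr_hilb_sc_norm}, and then invoke Theorem \ref{teo:abs_uni1} directly.

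First, I would verify Hypothesis \ref{hyp_1}. Points (i) and (ii) are immediate: $\mathcal{A}_{\alpha,\rho}$ generates a $C_0$-semigroup on $H=U\times V'$ by \cite[Appendix A]{chen-trig-1}, and the operator $G:U\to H$ defined before \eqref{B} is bounded by construction. Point (iv) is part of the standing Hölder assumption \eqref{ey33}. The key point is (iii): the integrability \eqref{e5.17} follows from Lemma \ref{lemma_conv_stocastica}, which shows $\sup_{t\ge 0}\|e^{t\mathcal{A}_{\alpha,\rho}}G\|_{L_2(U,H)}<\infty$ under Hypothesis \ref{hyp_rho} and the trace class condition \eqref{lamb1}, so a fortiori \eqref{e5.17} holds for any $\alpha \in (0,1/2)$.

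Next, for Hypothesis \ref{hyp_2_1}, I would choose the Yosida approximants $A_n := n\mathcal{A}_{\alpha,\rho}R(n,\mathcal{A}_{\alpha,\rho})$. These are bounded operators and hence generate uniformly continuous groups $(e^{tA_n})_{t\in\mathbb{R}}$; part (A) (the uniform bound \eqref{d3} on $[0,T]$ and pointwise convergence $e^{tA_n}x\to e^{tA}x$) is a standard property of Yosida approximations of generators of $C_0$-semigroups. For part (B), I would invoke Proposition \ref{pro:energy_est_damped}: this provides the controllability condition and the explicit rates $\Lambda_1,\Lambda_2$ in \eqref{stime_energia}, together with Hypothesis \ref{hyp:sigma_3}, using in a crucial way that $\Lambda^{-1}$ is trace class. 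The Hölder restriction on $\beta$ in Hypothesis \ref{hyp_non_linear_damped_1} is precisely the one needed to ensure $\Lambda_1^{1-\beta}\Lambda_2 \in L^1(0,T)$ (Hypothesis \ref{hyp_3}), as verified at the end of that Proposition.

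With these analytic inputs at hand, Corollary \ref{teo:abs_solmildPDEnq1} yields existence and uniqueness of the solutions $u_n^{\mathcal{T}}\in E_0^{\mathcal{T}}$ of \eqref{d5} together with the uniform bound $\|u_n^{\mathcal{T}}\|_{0,\mathcal{T}}\le M_T$, from which estimate (i) in Hypothesis \ref{hyp_2_1}(B) (the Lipschitz bound \eqref{sg}) is immediate. The remaining condition (ii), namely the Hilbert–Schmidt estimate \eqref{df} with an integrable, $n$-independent kernel $h^{\mathcal{T}}$, is precisely the content of Theorem \ref{thm:HS_wave_equations} (estimate \eqref{abs_stima_der_sec} gives \eqref{dfdf} with $h\equiv c$, hence the $L^1$-bound holds trivially). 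With both parts of Hypothesis \ref{hyp_2_1} secured, Theorem \ref{teo:abs_uni1} applies and gives pathwise uniqueness of mild solutions to \eqref{abs_stoc_damped_wave_eq_2_bis} together with the Lipschitz dependence \eqref{abs_lip-dependence}. No single step is a substantial obstacle since the analytic core, namely the spectral estimates on $\Gamma(t)$ and $\Gamma(t)G$ needed to produce $\Lambda_1,\Lambda_2$, has already been carried out in Proposition \ref{pro:energy_est_damped} and the appendix; the main care in the write-up lies in matching the restrictions on $\beta$ in Hypothesis \ref{hyp_non_linear_damped_1} with the integrability required in Hypothesis \ref{hyp_3}.
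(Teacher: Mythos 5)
Your proposal is correct and follows essentially the same route as the paper: the paper's own proof of Theorem \ref{wff} is exactly the assembly of Lemma \ref{lemma_conv_stocastica} (for Hypothesis \ref{hyp_1}(iii)), Proposition \ref{pro:energy_est_damped} (for Hypotheses \ref{hyp_22}, \ref{hyp_3}, \ref{hyp:sigma_3}), Corollary \ref{teo:abs_solmildPDEnq1} and Theorem \ref{thm:HS_wave_equations} (for Hypothesis \ref{hyp_2_1}(B) with the Yosida approximants), followed by an application of Theorem \ref{teo:abs_uni1}. The only cosmetic slip is the first mention of the Yosida approximants as $nAR(n,\mathcal{A}_{\alpha,\rho})$, which you correct later to $n\mathcal{A}_{\alpha,\rho}R(n,\mathcal{A}_{\alpha,\rho})$.
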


\subsection{{ Semilinear stochastic heat equations} }
\label{sub:heat_equation}

Let us assume that $A=\Delta$ is the realization of the Laplacian operator in $H=U=L^2([0,\pi]^d)$ with periodic boundary conditions, and let $G=(-\Delta)^{-\gamma/2}$
with $\gamma\geq0$.  We are considering 
 \begin{align} 
\label{ww4}
\left\lbrace
\begin{array}{ll}dX^{x}_t= \Delta X^{x}_t dt + C(X^{x}_t)dt + (-\Delta)^{-\gamma/2} dW_t, & t \in[0,T], \vspace{1mm} \\ 
X_0^{x}=x\in H.  
 \end{array}\right.
\end{align} 
It is well known that $D(A)=H^2([0,2\pi]^d)_{\rm per}$, the classical Sobolev space with periodic boundary conditions. We notice that Hypothesis \ref{ip_finite}-$1.$ is fulfilled.
  Let $W$ be a cylindrical Wiener process on $H$, see Section \ref{sub:abstract_equation}.
  We first note that assumption \eqref{e5.17}  is verified if 
\begin{equation}\label{see2}
 1+\gamma> \frac d2
\end{equation}
(see also  the proof of Lemma 9 in \cite{dapr-fl}). 
 In particular,  if $\gamma =0$, i.e., $G=I$,  it is required  $d=1$, 
  as in \cite{dapr-fl}.  Let $R>0$. We consider, { for a fixed  $\beta \in (0,1)$, }
\begin{align} \label{s33}
 C(f)(\xi):=g(\xi)\int_{[0,2\pi]^d} h(\xi')\, {\left(|f(\xi')|\wedge R\right)^{\beta}}\ \! d\xi', \quad \xi\in [0,2\pi]^d,
\end{align}
for any $f\in H$  and $g\in H^{\gamma}([0,2\pi]^d)_{\rm per}$  (see, for instance, Section 6 in  \cite{hairer}) and $h\in L^\infty([0,2\pi]^d)$. The regularity of $g$ implies that   $C(f)\in D((-A)^{\gamma/2})={\rm Im}(G)$ for any $f\in H$. Hence we can set
\begin{gather*} 
 \widetilde C(f) = G^{-1}C(f) = (-\Delta)^{\gamma/2} C(f),\;\;\; f \in H.
\end{gather*}
and write $C(f)=   (-\Delta)^{-\gamma/2} \widetilde C(f) $ in \eqref{ww4}. 
 
 Arguing as in \cite[Lemma 8]{dapr-fl} it follows that there exists a positive constant $M$ such that
\begin{align*}
|\widetilde C(f_1)- \widetilde C(f_2)|_H
\leq M|(-\Delta)^{-\gamma/2} g|_{H}\|h\|_{\infty}|f_1-f_2|_H^{{ \beta}}, \quad f_1,f_2\in H. 
\end{align*} 
On the other hand, using the orthonormal basis $(e_n)$ in Hypothesis \ref{ip_finite},  we have the estimates
\begin{gather*}
 |\widetilde C(f)_n| = |\langle \widetilde C(f), e_n \rangle_H| 
 \\
 = |\langle (-\Delta)^{\gamma/2}g, e_n\rangle_H \, \int_{[0,2\pi]^d} h(\xi')\sqrt{|f(\xi')|\wedge R}\ \! d\xi' | \le C R |\langle (-\Delta)^{\gamma/2}g, e_n\rangle_H |\,  \|h\|_{\infty};
\\ 
|\widetilde C(f_1)_n - \widetilde C(f_2)_n | 
\le 
  C  |\langle (-\Delta)^{\gamma/2}g, e_n\rangle_H |\,   \|h\|_{\infty} \, 
  |f_1-f_2|_H^{{ \beta}},\;\;\; \;\;\; f, f_1, f_2 \in H.
\end{gather*}
 Hence, Hypothesis \ref{ip_finite}-$2.$ is satisfied. Indeed we have 
\begin{gather*}
\sum_{n\in\N}\frac{\|(\widetilde C(\cdot))_n\|^2_{{ \beta}}}{\alpha_n}
 \le 
 \frac{C_R}{\alpha_1} \, \|h\|_{\infty}^2  \, \sum_{n\in\N} |\langle (-\Delta)^{\gamma/2}g, e_n\rangle_H |^2  
 \le  \frac{C_R'}{\alpha_1} \|h\|_{\infty}^2 \, \| g \|_{H^{\gamma}([0,2\pi]^d)_{\rm per}}^2 < \infty.
\end{gather*}
Let us discuss (ii) in  Hypothesis \ref{hyp_3}. With our choice of $A,H$ and $G$ we have 
\begin{align*}
\Lambda_1(t)=t^{-1/2-\gamma/2},  \quad \Lambda_2(t)=t^{-1/2},  \quad t\in(0,T].
\end{align*}
Hence, Hypothesis \ref{hyp_3} $(ii)$ is satisfied if
\begin{align}
\label{ex_gamma_beta}
0\leq \gamma<\frac{\beta}{1-\beta}.
\end{align}
{ Since $\beta(1-\beta)^{-1}\rightarrow+\infty$ as $\beta\rightarrow1^-$, it follows that the bigger the H\"older exponent $\beta$ is, the bigger is the bound for $\gamma$. 
  In particular (see also the remark below):
  {
\begin{enumerate}[(i)]
\item  if we choose $\beta\in (\frac13,1)$ then \eqref{ex_gamma_beta} is satisfied for some $\gamma>\frac12$, hence $2(1+\gamma)>3$ and according to \eqref{see2} we can also consider $d=3$ for the SPDE \eqref{ww4};
\item if  we take $\beta\in(1/2,1)$ then \eqref{ex_gamma_beta} if fulfilled for some $\gamma>1$ and we get $2(1+\gamma)>4$; according to \eqref{see2}  we can also consider the case $d=4$ for the SPDE \eqref{ww4}.
\end{enumerate}}}
{ \sl In the previous two cases we can apply Theorem 
 \ref{teo:abs_uni1} 
 to equation \eqref{ww4} with $C$ given in \eqref{s33} and obtain 
 pathwise uniqueness and Lipschitz dependence of  initial conditions.  }

\begin{remark} \label{df11} Concerning \eqref{ww4}
 the main difference with \cite{dapr-fl} is that in such paper the authors  require   the assumption
\begin{align*}
\int_0^T\Lambda_t^{(1+\theta)}dt<+\infty, \quad \theta=\max\{\beta,1-\beta\}, \ \Lambda_t=\Lambda_1(t)=t^{-1/2-\gamma/2}.
\end{align*}
Hence, the best situation is $\beta=\frac12$ and above condition reads as
\begin{align*}
\frac32\left(\frac12+\frac\gamma2\right)<1 \Leftrightarrow \gamma<\frac13.
\end{align*}
{ According to \eqref{see2} we need 
$1+\gamma>\displaystyle \frac d2$.  By combining this condition with $\gamma<\frac13$ it follows that the case $d=3$ is not reached in \cite{dapr-fl}.} 
\end{remark}

\appendix
 \section{Energy estimates for the control problem associated to the beam equation }
\label{appendix A}

{As in Subsection \ref{1sub_app_spectral_dec}, here we consider complexified spaces.}
In $H=V\times U$ we consider the following control problem
\begin{align}\label{1contr_problem_agg}
\left\{
\begin{array}{ll}
z_t(t)=\mathcal A_{\alpha,\rho}z(t)+\widetilde G{\Lambda^{-\gamma}}u(t), & t\in[0,T], \vspace{1mm} \\
z(0)=(z_0^1,z_0^2)\in H,
\end{array}
\right.
\end{align}
where 
\begin{align*}
\mathcal A_{\alpha,\rho}
:=\left(
\begin{matrix}
0 & I \\
-\Lambda & -\rho \Lambda^\alpha
\end{matrix}
\right):D(\cala_{\alpha,\rho})\subset H\rightarrow H,\quad
\widetilde G:=\left(
\begin{matrix}
0 \\ I
\end{matrix}
\right):U\rightarrow H,
\end{align*}
and $\Lambda: {D(\Lambda)} \subset U\rightarrow U$, $\alpha$ and $\rho$ satisfy the assumptions in Section \ref{sec6:set_ass}. 

Let $z(t)=(z_1(t),z_2(t))\in H$ be the solution to the problem \eqref{1contr_problem_agg}; arguing as in Section \ref{subsec:mathscrA_alpha-rho} we infer that $y(t):=(y_1(t),y_2(t)):=(\Lambda^{1/2}z_1(t),z_2(t))\in \widetilde H:=U\times U$ is solution to 
\begin{align}
\label{1contr_problem}
\left\{
\begin{array}{ll}
y_t(t)=\widetilde {\mathcal A}_{\alpha,\rho}y(t)+\widetilde G{\Lambda^{-\gamma}}u(t), & t\in[0,T], \vspace{1mm} \\
y(0)=(y_0^1,y_0^2)\in \widetilde H,
\end{array}
\right.
\end{align}
where $y_0^1=\Lambda^{1/2}z_0^1$, $y_0^2=z_0^2$ and 
\begin{align*}
\widetilde {\mathcal A}_{\alpha,\rho}:=
\left(
\begin{matrix}
0 & \Lambda^{1/2} \\
-\Lambda^{1/2} & -\rho \Lambda^{\alpha} 
\end{matrix}
\right):D(\widetilde {\cala}_{\alpha,\rho})\subset \widetilde H\rightarrow \widetilde H.
\end{align*}
{For every $a \in U$ and $k= Ga=\widetilde G\Lambda^{-\gamma}a$, we get
 $|k|_{H}= |k|_{\widetilde H}= |{\Lambda^{-\gamma}} a|_{U}$.}

We notice that a control $u$ steers {$k$} to $0$ at time $t\in(0,T]$ in $H$ in (\ref{1contr_problem_agg}) if and only if $u$ steers {$k$}   to $0$ at time $t\in(0,T]$ in $\widetilde H$ in (\ref{1contr_problem}). 
Hence, the energy to steer {$k$} to $0$ at time $t$ in $\widetilde H$, which is given by
\begin{align*}
\widetilde {\mathcal E}_C(t,k):=\inf\left\{\int_0^t|u(s)|^2_{U}ds : u\in L^2(0,T;U), \ y \ {\textrm {solution to \eqref{1contr_problem}}}, \ k=(y_0^1,y_0^2), \ y(t)=0\right\},
\end{align*}
coincides with  the energy to steer {$k$} to $0$ at time $t$ in $H$, which is given by
\begin{align*}
{\mathcal E}_C(t,k):=\inf\left\{\int_0^t|u(s)|^2_{U}ds : u\in L^2(0,T;U), \ z \ {\textrm {solution to \eqref{1contr_problem_agg}}}, \ k=(z_0^1,z_0^2), \ z(t)=0\right\}.
\end{align*}
We will prove that (\ref{1contr_problem}) is null controllable and we provide an estimate to $\widetilde{\mathcal E}_C(T,k)$.
\begin{theorem}
\label{1thm:stima_energia} Let $\cala_{\alpha,\rho}$ be defined in (\ref{def:A_alfarho-G-1_bis}) and let Hypotheses \ref{hyp_rho} and condition  \eqref{lamb1} in Section \ref{sec6:set_ass}  hold true. 
{Let $T>0$. 
Then, there exists a positive constant $C=C(T)$ such that for any $a\in U$, $k=Ga$, the energy to steer $k$ to $0$ at time $t$ can be estimated by 
\begin{align*}
\widetilde{\mathcal E}_C(t,k)\leq 
\frac{C|\Lambda^{-\gamma}a|^2_{U}}{t^{1+4\gamma}}
\qquad t\in(0,T].
\end{align*}
In particular, the equality $\widetilde{\mathcal E}_C(t,k)={\mathcal E}_C(t,k)$ implies that for every $T>0$ there exists a positive constant $C=C(T)$ such that for any $a\in U$, $k=Ga$, the energy to steer $k$ to $0$ at time $t$ can be estimated by 
\begin{align*}
{\mathcal E}_C(t,k)\leq 
\frac{C|\Lambda^{-\gamma}a|^2_{U}}{t^{1+4\gamma}}
\qquad t\in(0,T].
\end{align*}
}

\end{theorem}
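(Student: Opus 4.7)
The plan is to estimate $\widetilde{\mathcal{E}}_C(t,Ga)$ by constructing an explicit null control mode-by-mode, using the spectral decomposition of $\widetilde{\mathcal{A}}_{\alpha,\rho}$ developed in Section \ref{1sub_app_spectral_dec}; since the two control problems \eqref{1contr_problem_agg} and \eqref{1contr_problem} are equivalent, as noted just before the statement, this yields both bounds simultaneously.

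First, I would reduce the infinite-dimensional problem to a countable family of independent two-dimensional subproblems. Each $E_n := \mathrm{span}\{\Phi_n^+,\Phi_n^-\}$ is invariant under $\widetilde{\mathcal{A}}_{\alpha,\rho}$, with distinct eigenvalues $\lambda_n^+\neq\lambda_n^-$ (ensured by Hypothesis \ref{hyp_rho}), and from \eqref{wqqq} the datum expands as $Ga=\sum_n(b_n^+ a_n\Phi_n^++b_n^- a_n\Phi_n^-)$ with $a_n:=\langle a,e_n/\|e_n\|_{V'}\rangle_{V'}$, so that $\sum_n|a_n|^2=\|a\|_{V'}^2$ by Parseval. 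A control of the form $u(s):=\sum_n u_n(s)\,e_n/\|e_n\|_{V'}$ satisfies $\|u\|_{L^2(0,t;V')}^2=\sum_n\|u_n\|_{L^2(0,t)}^2$ and, on each invariant subspace $E_n$, drives the scalar-controlled $2$D ODE
\begin{equation*}
\dot\xi^+=\lambda_n^+\xi^++b_n^+u_n,\qquad \dot\xi^-=\lambda_n^-\xi^-+b_n^-u_n,
\end{equation*}
starting at $(b_n^+a_n,b_n^-a_n)$. Since $\lambda_n^+\neq\lambda_n^-$ and $b_n^\pm\neq 0$ by \eqref{formula_b_n}, Kalman's condition holds and each modal system is null-controllable.

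Second, in each $E_n$ I would take $u_n$ to be the classical minimum-$L^2$-norm null control given by the Gramian formula, and reduce the theorem to the uniform-in-$n$ estimate $\|u_n\|_{L^2(0,t)}^2\leq (C/t)|a_n|^2$; summation then delivers $\|u\|_{L^2(0,t;V')}^2\leq(C/t)\|a\|_{V'}^2$, which by the definition of minimal energy yields the claim.

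The main obstacle is precisely this uniform bound. One must analyze the $2\times 2$ controllability Gramian $W_n(t):=\int_0^t e^{s\Lambda_n}(b_n^+,b_n^-)^\top (b_n^+,b_n^-) e^{s\Lambda_n^\top}\,ds$ (with $\Lambda_n:=\mathrm{diag}(\lambda_n^+,\lambda_n^-)$), whose entries are explicit combinations of $\int_0^t e^{s(\lambda_n^\sigma+\lambda_n^\tau)}\,ds$, and show that its smallest singular value behaves like $t$ uniformly in $n$ in both spectral regimes: the \emph{hyperbolic} regime $\alpha\in[0,1/2]$ (complex conjugate $\lambda_n^\pm$ of modulus $\sim\mu_n^{1/2}$ by \eqref{1stime_coefficienti_avl}) and the \emph{parabolic} regime $\alpha\in(1/2,1)$ (real negative eigenvalues $\lambda_n^+\sim-\mu_n^{1-\alpha}$, $\lambda_n^-\sim-\mu_n^\alpha$ by \eqref{1stime_coeff_avl_2}). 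The decisive cancellation producing a polynomial $1/t$-rate rather than an exponential blow-up stems from the ``velocity-actuation'' structure of $G$: the key ratio $\int_0^t|e^{s\lambda_n^+}-e^{s\lambda_n^-}|^2\,ds/|\lambda_n^+-\lambda_n^-|^2$ reproduces exactly the $t$-scaling already computed in the stochastic-convolution Lemma \ref{lemma_conv_stocastica}, consistent with the rate $\Lambda_2(t)=t^{-1/2}$ in \eqref{stime_energia}.
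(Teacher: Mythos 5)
Your reduction to a family of decoupled two--dimensional problems is legitimate: the subspaces $E_n=\mathrm{span}\{\Phi_n^+,\Phi_n^-\}$ are mutually orthogonal in $\widetilde H$ and invariant for the semigroup and for $G$, the cost decouples as $\sum_n\|u_n\|^2_{L^2(0,t)}$, and Parseval gives $\sum_n|a_n|^2=\|a\|^2_{V'}$. This is also a genuinely different organization from the paper's argument, which never invokes the Gramian: it exhibits one explicit (non-optimal) control $v=v_0+v_1'$ built from \eqref{v_0}--\eqref{v_1} and a polynomial cut-off $\phi_T(t)\sim t^2(T-t)^2/T^5$, verifies by an integration by parts that $\int_0^Te^{(T-s)\widetilde{\mathcal A}_{\alpha,\rho}}Gv(s)\,ds=-e^{T\widetilde{\mathcal A}_{\alpha,\rho}}Ga$, and then bounds $\|v\|_{L^2(0,T;V')}$ directly from the eigenvalue asymptotics \eqref{1stime_coefficienti_avl}--\eqref{1stime_coeff_avl_2}.

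The gap sits exactly where you place ``the main obstacle,'' and the sufficient condition you propose there is false. You cannot show that the smallest singular value of $W_n(t)$ behaves like $t$ uniformly in $n$: already in the regime $\alpha\in(0,\frac12]$, where $b_n^{\pm}\sim\mathrm{const}$ and $\mathrm{Re}\,\lambda_n^{\pm}=-\tfrac{\rho}{2}\mu_n^{\alpha}\to-\infty$, one has $\langle W_n(t)p,p\rangle\le\int_0^t|e^{s\Lambda_n}B_n|^2\,ds\le C\int_0^\infty e^{-\rho\mu_n^{\alpha}s}\,ds=C\rho^{-1}\mu_n^{-\alpha}\to0$ for every fixed $t>0$, so the whole Gramian degenerates as $n\to\infty$ (and even for a single mode the smallest eigenvalue of a $2\times2$ Gramian over $[0,t]$ scales like $t^3$, not $t$, as $t\to0$). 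What actually has to be bounded is the quadratic form $\langle W_n(t)^{-1}e^{t\Lambda_n}B_n,e^{t\Lambda_n}B_n\rangle=\sup_{p\neq0}|f_p(t)|^2\big/\int_0^t|f_p(s)|^2\,ds$ with $f_p(s)=\langle e^{s\Lambda_n}B_n,p\rangle$ a two-term exponential sum; that this is $\le C/t$ uniformly in $n$ and $p$ is precisely the nontrivial content of the theorem, it does not follow from the Kalman condition, and it is not the Hilbert--Schmidt ratio you quote from Lemma \ref{lemma_conv_stocastica} (which controls $\|e^{t\widetilde{\mathcal A}_{\alpha,\rho}}G\|$ in the Hilbert--Schmidt norm, not $W_n(t)^{-1}$). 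To close the argument you would need to prove this Tur\'an-type inequality in both spectral regimes, which, once written out, reduces to the same explicit uniform bounds the paper establishes for its control, namely the boundedness in $n$ and $t$ of $(\lambda_n^-e^{\lambda_n^+t}-\lambda_n^+e^{\lambda_n^-t})/(\lambda_n^--\lambda_n^+)$ and of $(e^{\lambda_n^-t}-e^{\lambda_n^+t})/\bigl(t(\lambda_n^--\lambda_n^+)\bigr)$. As it stands, the proposal identifies the right quantity but does not establish the estimate on which the whole theorem rests.
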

\begin{proof}
Here, we consider the decomposition introduced in Section \ref{1sub_app_spectral_dec} and we keep the same notation.
{We follow the method in \cite[Proposition 1.3]{Zab08}, which has been extended to infinite dimension in \cite{MasPri17}. The idea is the following. If we consider the matrix formulation for $\widetilde {\mathcal A}_{\alpha,\rho}$ and $G$, then the $2\times 2$-matrix $[G|\widetilde {\mathcal A}_{\alpha,\rho}G]$ is invertible. We denote by $K$ its inverse matrix and by $K_0$ and $K_1$ the rows of $K$. Then, the control
\begin{align*}
u(t)=K_0(t)\psi(t)+K_1\psi'(t), \qquad t\in[0,T],    
\end{align*}
steers $k$ to $0$, where $\psi(t)=-\Phi(t)e^{t\widetilde {\mathcal A}_{\alpha,\rho}}k$ and $\Phi(t)$ is a suitable smooth function. We construct our control adapting this approach to our situation.}  

{The mild solution to \eqref{1contr_problem} is
\begin{align*}
y(t)=e^{t\widetilde {\mathcal A}_{\alpha,\rho}}y(0)+\int_0^t e^{(t-s)\widetilde{\mathcal A}_{\alpha,\rho}}\widetilde G\Lambda^{-\gamma}u(s)ds
\end{align*}
for every $t\in[0,T]$.}
Let us fix $T>0$ and $a\in U$. Further, we set $f_T(t):=t^2(T-t)^2$ and $\phi_T(t):=\|f_T\|_{L^1(0,T)}^{-1}f_T(t)$.
It follows that $\|\phi_T\|_{L^1(0,T)}=1$, that $\phi_T$ vanishes at $0$ and $T$ and that {$|\phi_T(t)|\leq \tilde c T^{-3}t^2$, $|t\phi'_T(t)|\leq \tilde cT^{-3}t^2$ and $|\phi_T'(t)|\leq \tilde cT^{-3}t$, for some positive constant $\tilde c$ and any $t\in[0,T]$}.  
We claim that the control
\begin{align*}
v(t):=v_{0}(t)+v'_1(t), \quad t\in(0,T], \qquad v(0)=0,
\end{align*}
defined by
\begin{align}
\langle v_0(t),e_n/|e_n|_{U}\rangle_{U}
& :={\mu_n^{\gamma}}\left(\frac{\lambda_n^- e^{\lambda_n^+ t}}{\lambda_n^--\lambda_n^+}-\frac{\lambda_n^+ e^{\lambda_n^- t}}{\lambda_n^--\lambda_n^+}\right){\widetilde a_n}\phi_T(t),\quad  n\in\N, \label{v_0} \\
\langle v_1(t),e_n/|e_n|_{U}\rangle_{U}
& :={\mu_n^{\gamma}}\left(\frac{- e^{\lambda_n^+ t}}{\lambda_n^--\lambda_n^+}+\frac{ e^{\lambda_n^- t}}{\lambda_n^--\lambda_n^+}\right){\widetilde a_n}\phi_T(t), \quad n\in\N, \label{v_1}
\end{align}
for $t\in(0,T]$, steers the initial state $Ga$ at $0$ at time $T$, where {${\widetilde a_n}:=\langle {\Lambda^{-\gamma}}a,e_n/|e_n|_{U}\rangle_{U}$} for any $n\in\N$ {(we have  $v_1' = \frac{dv_1}{dt}$)}. 
{The series which define $v_0(t)$ and $v_1(t)$ are well-defined since $\mu_n$ grows $|\lambda_n^{\pm}|^2$ (see \eqref{1stime_coefficienti_avl}) and for every $t>0$ we have $e^{t\widetilde {\mathcal A}_{\alpha,\rho}}$ maps $\widetilde H$ onto $D(\widetilde {(\mathcal A}_{\alpha,\rho})^k)$ for every $k\in\N$.}

At first, we notice that $\widetilde G{\Lambda^{-\gamma}}v_1(t)\in D(\widetilde {\cala}_{\alpha,\rho})$ for any $t\in[0,T]$. Indeed, from the definitions of $G$ and $v_1$ we have
\begin{align*}
\widetilde G{\Lambda^{-\gamma}}v_1(t)=\phi_T(t)\sum_{n=1}^\infty \left[\left(-\frac{e^{\lambda_n^+t}}{\lambda_n^--\lambda_n^+}+\frac{ e^{\lambda_n^- t}}{\lambda_n^--\lambda_n^+}\right)b^+_n{\widetilde a_n}\Phi_n^+
+ \left(-\frac{ e^{\lambda_n^+t}}{\lambda_n^--\lambda_n^+}+\frac{e^{\lambda_n^- t}}{\lambda_n^--\lambda_n^+}\right)b^-_n{\widetilde a_n}\Phi_n^-\right],
\end{align*}
For any $N\in\N$ we set
\begin{align*}
(\widetilde G{\Lambda^{-\gamma}}v_1(t))_N=\phi_T(t)\sum_{n=1}^N \left(-\frac{e^{\lambda_n^+t}}{\lambda_n^--\lambda_n^+}+\frac{ e^{\lambda_n^- t}}{\lambda_n^--\lambda_n^+}\right)\left[b^+_na_n\Phi_n^+
+b^-_na_n\Phi_n^-\right].
\end{align*}
Clearly, $(\widetilde G{\Lambda^{-\gamma}}v_1(t))_N\rightarrow \widetilde G{\Lambda^{-\gamma}}v_1(t)$ as $N\rightarrow+\infty$ in $\widetilde H$. Further, from the previous decomposition we infer that $(\widetilde G{\Lambda^{-\gamma}}v_1(t))_N\in D(\widetilde {\mathcal A}_{\alpha,\rho})$ for any $N\in\N$ and
\begin{align*}
\widetilde {\mathcal A}_{\alpha,\rho}(\widetilde G{\Lambda^{-\gamma}}v_1(t))_N=\phi_T(t)\sum_{n=1}^N \left(-\frac{e^{\lambda_n^+t}}{\lambda_n^--\lambda_n^+}+\frac{ e^{\lambda_n^- t}}{\lambda_n^--\lambda_n^+}\right)\left[\lambda_n^+b^+_n{\widetilde a_n}\Phi_n^++\lambda_n^-b^-_n{\widetilde a_n}\Phi_n^-\right].
\end{align*}
Hence, recalling the definition of $\Phi_n^+$ and of $\Phi_n^-$ we get
\begin{align*}
|\widetilde {\mathcal A}_{\alpha,\rho}(\widetilde G{\Lambda^{-\gamma}}v_1(t))_N|_{\widetilde H}^2
= &  {\phi_T(t)^2} \sum_{n=1}^N{\widetilde a_n}^2 \left(-\frac{e^{\lambda_n^+t}}{\lambda_n^--\lambda_n^+}+\frac{ e^{\lambda_n^- t}}{\lambda_n^--\lambda_n^+}\right)^2 \\
& \times \left((\lambda_n^+)^2(b_n^+)^2+(\lambda_n^-)^2((\chi_nb_n^-)^2+2\chi_nb_n^+b_n^-\lambda_n^+\lambda_n^-(\mu_n|e_n|_{U}^2+\lambda_n^+\lambda_n^-
|e_n|_{U}^2)
\right).
\end{align*}
We notice that from \eqref{1stime_coefficienti_avl} it follows that for any $t\in(0,T]$
\begin{align*}
& \Big (-\frac{e^{\lambda_n^+t}}{\lambda_n^--\lambda_n^+}+\frac{ e^{\lambda_n^- t}}{\lambda_n^--\lambda_n^+}\Big)^2 (|\lambda_n^+|^2+|\lambda_n^-|^2+|\lambda_n^+||\lambda_n^-|) \sim  {\rm const};  \\
& (\mu_n|e_n|_{U}^2+\lambda_n^+\lambda_n^-|e_n|_{U}^2)\sim {\rm const}, 
\end{align*}
and that
\begin{align*}
\sum_{n=1}^\infty {\widetilde a_n}^2(b_n^+)^2, \;\;\; \sum_{n=1}^\infty {\widetilde a_n}^2(b_n^-)^2<+\infty.
\end{align*}
Since {
\begin{gather*}  
| \widetilde {\mathcal A}_{\alpha,\rho}(\widetilde G{\Lambda^{-\gamma}}v_1(t))_N - 
 \widetilde {\mathcal A}_{\alpha,\rho}(\widetilde G{\Lambda^{-\gamma}}v_1(t))_{N + p} 
 |_{\widetilde H}^2
\leq  M{ \phi_T(t)^2 } \sum_{n=N+1}^{N+p} \left(-\frac{e^{\lambda_n^+t}}{\lambda_n^--\lambda_n^+}+\frac{ e^{\lambda_n^- t}} {\lambda_n^--\lambda_n^+}\right)^2 \,  \cdot  \\
 (|\lambda_n^+|^2+|\lambda_n^-|^2+|\lambda_n^+||\lambda_n^-|)\left ({\widetilde a_n}^2(b_n^+)^2+{\widetilde a_n}^2(b_n^-)^2\right) 
\leq   M\sum_{n=N+1 }^{N+p} \left({\widetilde a_n}^2(b_n^+)^2+{\widetilde a_n}^2(b_n^-)^2\right),
\end{gather*}
 }
for some positive constant $M$, it follows that $\widetilde {\mathcal A}_{\alpha,\rho}(Gv_1(t))_N$ converges to 
\begin{align*}
\phi_T(t)\sum_{n=1}^\infty \left[\left(-\frac{\lambda_n^+ e^{\lambda_n^+ t}}{\lambda_n^--\lambda_n^+}+\frac{\lambda_n^+ e^{\lambda_n^- t}}{\lambda_n^--\lambda_n^+}\right)b^+_n{\widetilde a_n}\Phi_n^+
+ \left(-\frac{\lambda_n^- e^{\lambda_n^+ t}}{\lambda_n^--\lambda_n^+}+\frac{\lambda_n^- e^{\lambda_n^- t}}{\lambda_n^--\lambda_n^+}\right)b^-_n{\widetilde a_n}\Phi_n^-\right].
\end{align*}
in $\widetilde H$ as $N\rightarrow+\infty$. Since $\widetilde {\mathcal A}_{\alpha,\rho}$ is a closed operator, it follows that $\widetilde G{\Lambda^{-\gamma}}v_1(t)\in D(\widetilde {\mathcal A}_{\alpha,\rho})$, {for any $t \in [0,T]$,} and 
\begin{align}
\label{1Gv1}
\widetilde{\mathcal A}_{\alpha,\rho}\widetilde G{\Lambda^{-\gamma}}v_1(t)=\phi_T(t)\sum_{n=1}^\infty \left[\left(-\frac{\lambda_n^+ e^{\lambda_n^+ t}}{\lambda_n^--\lambda_n^+}+\frac{\lambda_n^+ e^{\lambda_n^- t}}{\lambda_n^--\lambda_n^+}\right)b^+_n{\widetilde a_n}\Phi_n^+
+ \left(-\frac{\lambda_n^- e^{\lambda_n^+ t}}{\lambda_n^--\lambda_n^+}+\frac{\lambda_n^- e^{\lambda_n^- t}}{\lambda_n^--\lambda_n^+}\right)b^-_n{\widetilde a_n}\Phi_n^-\right].
\end{align} 
 Let us consider the integral term in the mild solution. We get
\begin{align}
\int_0^Te^{(T-s)\widetilde{\mathcal A}_{\alpha,\rho}}\widetilde G\Lambda^{-\gamma}v(s)ds
= & \int_0^Te^{(T-s)\widetilde{\mathcal A}_{\alpha,\rho}}\widetilde G\Lambda^{-\gamma}v_0(s)ds+\int_0^Te^{(T-s)\widetilde{\mathcal A}_{\alpha,\rho}}\widetilde G\Lambda^{-\gamma}v'_1(s)ds \notag \\
= & \int_0^Te^{(T-s)\widetilde{\mathcal A}_{\alpha,\rho}}(\widetilde G{\Lambda^{-\gamma}}v_0(s)+\widetilde{\mathcal A}_{\alpha,\rho}\widetilde G{\Lambda^{-\gamma}}v_1(s))ds, \label{1controllo_spezzamento}
\end{align}
where we have integrated by parts under the second integral and we have used the fact that $\widetilde G{\Lambda^{-\gamma}}v_1(s)\in D(\widetilde{\mathcal A}_{\alpha,\rho})$ for any $s\in[0,T]$, and that ${\Lambda^{-\gamma}}v_1(0)={\Lambda^{-\gamma}}v_1(T)=0$. We notice that for any $s\in[0,T]$ we have
\begin{align}
\widetilde G{\Lambda^{-\gamma}}v_0(s)=\phi_T(s)\sum_{n=1}^\infty \left[\left(\frac{\lambda_n^- e^{\lambda_n^+ s}}{\lambda_n^--\lambda_n^+}-\frac{\lambda_n^+ e^{\lambda_n^- s}}{\lambda_n^--\lambda_n^+}\right)b^+_n{\widetilde a_n}\Phi_n^+
+\left(\frac{\lambda_n^- e^{\lambda_n^+ s}}{\lambda_n^--\lambda_n^+}-\frac{\lambda_n^+ e^{\lambda_n^- s}}{\lambda_n^--\lambda_n^+}\right)b^-_n{\widetilde a_n}\Phi_n^-\right].\label{1Gv0}
\end{align}
Hence, \eqref{1Gv1} and \eqref{1Gv0} give
\begin{align}
\widetilde G{\Lambda^{-\gamma}}v_0(s)+\widetilde{\mathcal A}_{\alpha,\rho}\widetilde G{\Lambda^{-\gamma}}v_1(s)
= &  -\phi_T(s)\sum_{n=1}^\infty\left( e^{\lambda_n^+s}b_n^+{\widetilde a_n}\Phi_n^+
+ e^{\lambda_n^-s}b_n^-{\widetilde a_n}\Phi_n^-\right) \notag \\
= & -\phi_T(s)e^{s\widetilde{\mathcal A}_{\alpha,\rho}}(Ga), \quad s\in[0,T].
\label{1conto_somma controlli}
\end{align}
Replacing \eqref{1conto_somma controlli} in \eqref{1controllo_spezzamento} we get
\begin{align*}
\int_0^Te^{(T-s)\widetilde{\mathcal A}_{\alpha,\rho}}\widetilde G{\Lambda^{-\gamma}}v(s)ds
= -e^{T\widetilde{\mathcal A}_{\alpha,\rho}}(Ga).
\end{align*}
The mild formulation of the solution $y$ to \eqref{1contr_problem} implies that
\begin{align*}
y(T)
= & e^{T\widetilde {\cala}_{\alpha,\rho}}(Ga)+\int_0^Te^{(T-s)\widetilde {\cala}_{\alpha,\rho}}\widetilde G{\Lambda^{-\gamma}}v(s)ds
= e^{T\widetilde{\mathcal A}_{\alpha,\rho}}(Ga)-e^{T\widetilde{\mathcal A}_{\alpha,\rho}}(Ga)=0,
\end{align*}
which gives the claim. 

Now we estimate the 
$L^2$-norm of the control $v$. We separately consider the two addends $v_0$ and $v_1$.

As far as $v_0$ is concerned, from \eqref{v_0} we get
\begin{align*}
\int_0^T|v_0(t)|_{U}^2dt
\leq \int_0^T(\phi_T(t))^2\sum_{n=1}^\infty
{\mu_n^{2\gamma}}\left|\frac{\lambda_n^- e^{\lambda_n^+ t}-{\lambda_n^+ e^{\lambda_n^- t}}}{\lambda_n^--\lambda_n^+}\right|^2|{\widetilde a_n}|^2dt.
\end{align*}
From \eqref{1stime_coefficienti_avl}, for any $n\in\N$ we get
{
\begin{align}
{\mu_n^{2\gamma}}\left|\frac{\lambda_n^- e^{\lambda_n^+ t}-{\lambda_n^+ e^{\lambda_n^- t}}}{\lambda_n^--\lambda_n^+}\right|^2
\leq & C \left|\mu_n^{\gamma}\left(e^{\lambda_n^+t}\right)+\mu_n^{\gamma}\left(e^{\lambda_n^-t}\right)\right|^2 \notag \\
\leq & C\left|(\lambda_n^+)^{2\gamma}\left(e^{\lambda_n^+t}\right)+(\lambda_n^-)^{2\gamma}\left(e^{\lambda_n^-t}\right)\right|^2 \notag \\
\leq & C |\widetilde{\mathcal A}_{\alpha,\rho}^{4\gamma}e^{t\widetilde{\mathcal A}_{\alpha,\rho}}\Phi_n^+|_{\widetilde H}^2+|\widetilde{\mathcal A}_{\alpha,\rho}^{4\gamma}e^{t\widetilde{\mathcal A}_{\alpha,\rho}}\Phi_n^-|_{\widetilde H}^2 \notag \\
\leq & CL_T^2t^{-4\gamma},
\label{stima_serie_v0}
\end{align}
where $C$ is a positive constant which may vary from line to line.
Hence,
\begin{align*}
\int_0^T|v_0(t)|_{U}^2dt
\leq {\rm const} \int_0^T|\phi_T(t)|^2t^{-4\gamma}dt|\Lambda^{-\gamma} a|_{U}^2
\leq C|\Lambda^{-\gamma}a|_{U}^2T^{-1-4\gamma},
\end{align*}
for some positive constant $C$.}

Let us consider {$v_1'$}. From \eqref{v_1} we get
\begin{align*}
\int_0^T| {v_1'(t)}|^2_{U}dt
\leq & 2\int_0^T(\phi_T(t))^2\sum_{n=1}^\infty{\mu_n^{2\gamma}}\left|\frac{-\lambda_n^+ e^{\lambda_n^+ t}}{\lambda_n^--\lambda_n^+}+\frac{ \lambda_n^-e^{\lambda_n^- t}}{\lambda_n^--\lambda_n^+}\right|^2|{\widetilde a_n}|^2dt \\
& +2\int_0^T(\phi_T'(t))^2\sum_{n=1}^\infty{\mu_n^{2\gamma}}\left|\frac{- e^{\lambda_n^+ t}}{\lambda_n^--\lambda_n^+}+\frac{ e^{\lambda_n^- t}}{\lambda_n^--\lambda_n^+}\right|^2 {|{\widetilde a_n}|^2 } 
dt:=I_1+I_2.
\end{align*} 
The first integral can be estimated arguing as for $v_0$. By taking $I_2$ into account, if we multiply and divide by ${t^2}$ under the integral sign we get
\begin{align*}
I_2
= 2\int_0^T(t\phi_T'(t))^2\sum_{n=1}^\infty{\mu_n^{2\gamma}}\left|\left(\frac{- e^{\lambda_n^+ t}+ e^{\lambda_n^- t}}{t(\lambda_n^--\lambda_n^+)}\right){\widetilde a_n}\right|^2dt.
\end{align*}
Since there exists a positive constant $C$ such that
\begin{align*}
\sup_{t\in(0,T]}\sup_{n\in\N}{\mu_n^{2\gamma}}\left|\frac{- e^{\lambda_n^+ t}+ e^{\lambda_n^- t}}{t(\lambda_n^--\lambda_n^+)}\right|^2
= \sup_{t\in(0,T]}\sup_{n\in\N}{\mu_n^{2\gamma}}\left|\frac{e^{\lambda_n^+t}(e^{(\lambda_n^--\lambda_n^+)t}-1)}{t(\lambda_n^--\lambda_n^+)}\right|^2\leq C,
\end{align*}
we infer that $I_2\leq c|\Lambda^{-\gamma}a|^2_{U}T^{-1}$ for some positive constant $c$. Therefore, we can conclude that
\begin{align*}
\|v\|_{L^2(0,T;U)}\leq \frac{c|\Lambda^{-\gamma}a|_{U}}{T^{1/2{+2\gamma}}}.
\end{align*}
\end{proof}

Theorem \ref{1thm:stima_energia} has an important consequence, due to the fact that
$\mathcal E_C(t,k)=|Q^{-1/2} e^{t{\mathcal A}_{\alpha,\rho}}k|_H^2$ for any $k\in H$ and any $t>0$.
{
\begin{corollary}
\label{app:stime_energia_direzionali_1} Under the assumptions of Theorem \ref{1thm:stima_energia}, there exists a positive constant $C$ such that for any $a\in U$ and any $t\in(0,T]$ we have
\begin{align*}
& |Q^{-1/2}_te^{t\widetilde{\mathcal A}_{\alpha,\rho}}Ga|_H^2
=\mathcal E_C(t,Ga)
=\mathcal E_C(t,\widetilde G\Lambda^{-\gamma}a)
\leq  \frac{C|\Lambda^{-\gamma}a|^2_{U}}{t^{1+4\gamma}}.
\end{align*}
\end{corollary}}

Now we provide energy estimates in case of general initial datum $k\in H$.
{
\begin{theorem}
\label{thm:stime_globali_damped_beam_control}
Let $\cala_{\alpha,\rho}$ be defined in (\ref{def:A_alfarho-G-1_bis}) and let Hypotheses \ref{hyp_rho} and condition  \eqref{lamb1} in Section \ref{sec6:set_ass}  hold true. 
There exists a positive constant $c$ such that for every $h\in H$ we have
\begin{align*}
\mathcal E_C(t,h)\leq 
\frac{c|h|^2_{H}}{t^{1+4\gamma}} \qquad t\in(0,T]. 
\end{align*}
In particular, there exists a positive constant $c$ such that for any $h\in H$ and any $t\in(0,T]$
\begin{align*}
& |Q^{-1/2}_te^{t\widetilde{\mathcal A}_{\alpha,\rho}}h|_H^2
=\mathcal E_C(t,h)
\leq  \frac{c|h|^2_{H}}{t^{1+4\gamma}}.
\end{align*}
\end{theorem}
\begin{proof}
Let us apply the method exploited in the proof of Theorem \ref{1thm:stima_energia}. Let $h\in H$. As above, we provide an estimate of $\widetilde {\mathcal E}_C(t,h)$. We set
\begin{align*}
\langle v_0(t),e_n/|e_n|_{U}\rangle_{U}
& :={\mu_n^{\gamma}}\left({\lambda_n^- e^{\lambda_n^+ t}}h_n^++\chi_n{\lambda_n^+ e^{\lambda_n^- t}}h^-_n\right)|e_n|_{U}\phi_T(t),\quad  n\in\N,  \\
\langle v_1(t),e_n/|e_n|_{U}\rangle_{U}
& :=-{\mu_n^{\gamma}}\left({ e^{\lambda_n^+ t}}h_n^++\chi_n{ e^{\lambda_n^- t}}h_n^-\right)|e_n|_{U}\phi_T(t), \quad n\in\N, 
\end{align*}
for $t\in(0,T]$, and the control
\begin{align*}
v(t):=v_{0}(t)+v'_1(t), \quad t\in(0,T], \qquad v(0)=0.
\end{align*}
Here, $h_n^+=\langle h^+,\Phi_n^+\rangle_{\widetilde H}$ and $h_n^+=\langle h^-,\Phi_n^-\rangle_{\widetilde H}$ for any $n\in\N$. Arguing as in the proof of Theorem \ref{1thm:stima_energia} it 
is possible to prove that $v$ steers the initial state $h$ at $0$ at time $T$. It remains to estimate the $L^2$-norm of $v$. Arguing as before we get
\begin{align*}
\|v\|_{L^2(0,T;U)}\leq \frac{c|h|^2_{H}}{T^{1/2+2\gamma}},
\end{align*}
and we conclude.
\end{proof}}

\section{Proof of Lemma \ref{abs_lem_interpolazione}}
\label{appendix B}
\begin{proof}
In the proof $C$ is a positive constant which may vary from line to line. 
\newline Let $t\in(0,T]$, let $y\in H$, and let us consider the linear operators
\begin{align*}
\nabla_y \calr_t:C_b^1(H)\rightarrow C_b(H), \quad \nabla_y \calr_t:C_b(H)\rightarrow C_b(H).
\end{align*}
For any $\phi\in C_b^1(H)$ and any $x,y\in H$ we have
\begin{align*}
\nabla_y\calr_t[\phi](x)
= & \int_H\langle \nabla\phi(e^{tA}x+z),e^{tA}y\rangle_H\mathscr N(0,Q_t)(dz),
\end{align*}
from which it follows that
\begin{align}
\label{abs_stima_gradiente_gradiente_scalare}
\sup_{x\in H}|\nabla_y \calr_t[\phi](x)|\leq C_T\|\phi\|_{C_b^1(H)}|y|_H, \quad \phi\in C_b^1(H).
\end{align}
Further, if $\phi\in C_b(H)$ we have
\begin{align*}
\nabla_y\calr_t[\phi](x)
= & \int_H\langle \Gamma(t)y,Q^{-1/2}z \rangle \phi(e^{tA}x+z)\mathscr N(0,Q_t)(dz),
\end{align*}
which combined with \eqref{abs_Gamma_OU} implies
\begin{align}
\label{stima_grad_funz_scalare}
\sup_{x\in H}|\nabla_y \calr_t[\phi](x)|\leq C_T\|\phi\|_{C_b(H)}\Lambda_1(t)|y|_H, \quad \phi\in C_b(H), 
\end{align}
Recalling (\ref{abs_interpolation_result}) and interpolating between \ref{abs_stima_gradiente_gradiente_scalare} and \ref{stima_grad_funz_scalare} we infer that
\begin{align}
\label{abs_stima_inte_1}
\sup_{x\in H}|\nabla_y \calr_t[\phi](x)|_H\leq C_T\|\phi\|_{\beta}\Lambda_1^{1-\beta}(t)|y|_H, \quad \phi\in C^\beta_b(H).
\end{align}
Let us consider $h\in H$ and $\Phi\in C^\beta_b(H;H)$. From \eqref{abs_smgr_sc_smgr_vet} and \eqref{abs_stima_inte_1} we have
\begin{align*}
\sup_{x\in H}|\langle \nabla_yR_t[\Phi](x),h\rangle_H|
\leq C_T\|\Phi_h\|_{\beta}\Lambda_1^{1-\beta}(t)|y|_H
\leq C_T\|\Phi\|_{\beta}\Lambda_1^{1-\beta}(t)|y|_H|h|_H, 
\end{align*}
which gives \eqref{abs_stima_holder_1}. To prove \eqref{abs_stima_holder_3} we fix $y\in H$ and $k\in U$ and we consider the linear operators
\begin{align*}
\nabla_y\nabla^G_k \calr_t:C_b^1(H)\rightarrow C_b(H), \quad \nabla_y\nabla_k^G \calr_t:C_b(H)\rightarrow C_b(H).
\end{align*}
For any $\phi\in C_b^1(H)$ it follows that
\begin{align*}
\nabla_y\nabla^G_k \calr_t[\phi](x)
= \int_H\langle \Gamma(t)Gk,Q^{-1/2}_tz\rangle_H\langle \nabla\phi(e^{tA}x+z),{e^{t A}y}\rangle \mathscr N(0,Q_t)(dz),
\end{align*}
which implies that
\begin{align}
\label{abs_stima_der_seconda_gradiente}
|\nabla_y(\nabla_{k}^G\calr_t[\phi])(x)|_{H}
\leq & {C}|\Phi\|_{C_b^1(H)}\Lambda_2(t)|y|_H|k|_U, \quad t\in(0,T], \ \phi\in C_b^1(H),
\end{align}
for any $x,y\in H$ and any $k\in U$. 
As above, we get 
\begin{align}
\label{abs_stima_der_seconda_funz}
|\nabla_y(\nabla_k^G\calr_t[\phi])(x)  |_{H}
\leq C\|\phi\|_{C_b(H)}\Lambda_1(t)\Lambda_2(t)|y|_H|k|_U,
\end{align}
for any $k\in U$, $x,y\in H$ and $t\in(0,T]$. Interpolating between \eqref{abs_stima_der_seconda_gradiente} and \eqref{abs_stima_der_seconda_funz} we infer that
\begin{align}
\label{stima_sc_der_seconde_G}
|\nabla_y(\nabla_{k}^G\calr_t[\phi])(x)|_{H}
\leq C\|\phi\|_{\beta}\Lambda_1^{1-\beta}(t)\Lambda_2(t)|y|_H|k|_U, \quad t\in(0,T], \quad \phi\in C_b^\beta(H),
\end{align} 
for any $x,y\in H$ and $k\in U$. Therefore, from \eqref{abs_smgr_sc_smgr_vet} and \eqref{stima_sc_der_seconde_G} we infer that
\begin{align*}
\sup_{x\in H}|\langle \nabla_y\nabla^G_kR_t[\Phi](x),h\rangle_H|
\leq & C_T\|\Phi_h\|_{\beta}\Lambda_1^{1-\beta}(t)\Lambda_2(t)|y|_H
\leq C_T\|\Phi\|_{\beta}\Lambda_1^{1-\beta}(t)\Lambda_2(t)|y|_H|h|_H|k|_U.
\end{align*} 
\end{proof}

\end{document}